\theoremstyle{plain}
\newtheorem{theorem}{Theorem}[section]
\newtheorem{proposition}[theorem]{Proposition}
\newtheorem{lemma}[theorem]{Lemma}
\newtheorem{corollary}[theorem]{Corollary}
\theoremstyle{definition}
\newtheorem{definition}[theorem]{Definition}
\newtheorem*{acknowledgements}{Acknowledgements}
\newtheorem{setting}[theorem]{Setting}
\theoremstyle{remark}
\newtheorem{remark}[theorem]{Remark}
\newtheorem{notation}[theorem]{Notation}
\newtheorem{convention}[theorem]{Convention}
\numberwithin{equation}{theorem}
\DeclareMathOperator{\Pic}{Pic}
\DeclareMathOperator{\NE}{NE}
\DeclareMathOperator{\pr}{pr}
\DeclareMathOperator{\Nef}{Nef}
\DeclareMathOperator{\Gr}{Gr}
\DeclareMathOperator{\rank}{rank}
\DeclareMathOperator{\RC}{RatCurves}
\DeclareMathOperator{\Ext}{Ext}
\DeclareMathOperator{\Exc}{Exc}
\DeclareMathOperator{\codim}{codim}
\DeclareMathOperator{\td}{td}
\DeclareMathOperator{\ch}{ch}
\DeclareMathOperator{\pt}{pt}
\newcommand{\cNE}{{\overline{\NE}}}
\newcommand{\nequiv}{\equiv _\mathrm{num}}
\newcommand{\tM}{\widetilde M}
\newcommand{\tN}{\widetilde N}
\newcommand{\tf}{\widetilde f}
\newcommand{\te}{\widetilde e}
\newcommand{\tR}{\widetilde R}
\newcommand{\blank}{{-}}
\newcommand{\tvarphi}{\widetilde \varphi}
\newcommand{\Banica}{B\u anic\u a }
\newcommand{\sE}{\mathcal{E}}
\newcommand{\sF}{\mathcal{F}}
\newcommand{\sG}{\mathcal{G}}
\newcommand{\sL}{\mathcal{L}}
\newcommand{\sC}{\mathcal{C}}
\newcommand{\sS}{\mathcal{S}}
\newcommand{\sN}{\mathcal{N}}
\newcommand{\cO}{\mathcal{O}}
\newcommand{\bC}{\mathbf{C}}
\newcommand{\bF}{\mathbf{F}}
\newcommand{\bP}{\mathbf{P}}
\newcommand{\bQ}{\mathbf{Q}}
\newcommand{\bR}{\mathbf{R}}
\newcommand{\bZ}{\mathbf{Z}}
\newcommand{\tC}{\widetilde{C}}
\newcommand{\tW}{\widetilde{W}}
\newcommand{\tY}{\widetilde{Y}}
\newcommand{\tE}{\widetilde{E}}
\newcommand{\tB}{\widetilde{B}}
   \def\MR#1{}
\title[Classification of Mukai pairs with dimension $4$ and rank $2$]{Classification of Mukai pairs with \\dimension $4$ and rank $2$}
\author[A. KANEMITSU]{Akihiro KANEMITSU}
\date{\today}
\address{Research Institute for Mathematical Sciences,
Kyoto University, Kyoto 606-8502, Japan}
\email{kanemitu@kurims.kyoto-u.ac.jp}
\thanks{The author is a JSPS Research Fellow and he was supported by the Grant-in-Aid for JSPS fellows (JSPS KAKENHI Grant Number 15J07608,  18J00681).
This work was supported by the Program for Leading Graduate Schools, MEXT, Japan.}
\subjclass[2010]{14J35, 14J40, 14J45, 14J60}
\keywords{Fano manifold, vector bundle, Mukai pair}
\begin{document}

\begin{abstract}
We give the complete classification of Mukai pairs of dimension $4$ and rank $2$ with Picard number one, that is, pairs $(X,\sE)$ where $X$ is a Fano $4$-fold with Picard number one, and $\sE$ is an ample vector bundle of rank two on $X$ with $c_1(X) = c_1(\sE)$.
Equivalently, the present paper completes the classification of ruled Fano $5$-folds with index two, which was partially done by C.~Novelli and G.~Occhetta in 2007.
\end{abstract}

\maketitle

\section*{Introduction}
\subsection{}
A \emph{Fano manifold} $M$ is, by definition, a smooth projective variety with ample anticanonical divisor $-K_M$.
One of the most important and fundamental invariants for a given Fano manifold $M$ is its \emph{index}, denoted by $r_M$, which is defined as the greatest integer dividing $-K_M$ in the Picard group.
Roughly speaking, the index measures how large the anticanonical divisor is, and philosophically the positivity of anticanonical divisor poses some restriction on the (biregular) structure of $M$.
Indeed, Fano manifolds $M$ with $r_M \geq \dim M -2$ are completely classified in celebrated articles \cite{KO73,Fuj82a,Fuj82b,Muk89} (cf.\ \cite{Mel99,Amb99}).

On the other hand, it appears that, due to lack of knowledge on Calabi-Yau $3$-folds or Fano $4$-folds, the complete classification of Fano $n$-folds with index $n-3$ is far from being complete, particularly when the Picard number is one.
Nevertheless, if $n \geq 5$ and the Picard number is bigger than 1, then we still have some room to attack the problem by studying its contractions, whose existence is promised by the fundamental theorems in Mori theory.
It is known that, if $M$ is a Fano $n$-fold with index $n-3$ and $\rho_M>1$,  then $n\leq 8$  \cite{Wis90}.
Moreover, by works on Fano manifolds with large index  \cite{Wis90,Wis91,PSW92b,Wis93,BW96} (cf.\ \cite{Occ05}), such Fano manifolds are completely classified when $n \geq6$.
Also, in a series of papers \cite{CO06,NO07,CO08}, Chierici, Novelli and Occhetta started to classify Fano $5$-folds with index two and Picard number bigger than one by the above strategy, though the classification is not completed yet.

Among their study, the ruled case \cite{NO07} is of particular interest in the present paper.
In that paper, Novelli and Occhetta classified ruled Fano $5$-folds $M$ with index two with the assumption $\rho_M \geq 3$ (see Remark~\ref{rem:NO} for their treatment in the case $\rho_M=2$).
The purpose of this paper is to complete the classification of ruled Fano $5$-folds with index two by studying the case $\rho_M =2$, which are not treated sufficiently in \cite{NO07}.

\subsection{}
There is another point of view; \emph{classification of Mukai pairs with large rank}.
In 1988, Mukai \cite{Muk88} introduced study of pairs $(X,\sE)$ where $X$ is a Fano manifold, and $\sE$ is an ample vector bundle on $X$ with $c_1(X) = c_1(\sE)$.
In \cite{Kan17}, such a pair is called a \emph{Mukai pair}.
Given a Mukai pair $(X,\sE)$, its rank is defined as the rank of the bundle $\sE$.
This invariant ``rank $\sE$ for a Mukai pair'' is an analogue of ``index for a Fano manifold.''
For example, a Fano manifold $X$ with index $r_X$ gives a Mukai pair $(X, \cO(H_X)^{\oplus r_X})$ of rank $r_X$, where $H_X\coloneqq -K_X/r_X$ is the fundamental divisor of $X$.
Based on the above analogy, Mukai pairs are classified when $\rank \sE \geq \dim X -1$ around 1990s \cite{Fuj92,Pet90,Pet91,YZ90,Wis89b,PSW92b} (cf.\ \cite{Occ05}).

Therefore, it is natural to hope the classification of Mukai pairs with $\rank \sE = \dim X -2$ as a next step.
In \cite{Kan17}, the author classified such pairs when $\dim X \geq 5$.
Note that,  for the case $\rank \sE = \dim X -2$, the smallest possible value of $\dim X$ is three, and the classification in this case is equivalent to the classification of Fano $3$-folds, which is established by Fano, Iskovskih, Shokurov, Fujita, Mori and Mukai (see \cite{IP99} and references therein).
On the other hand, the classification of Mukai pairs with dimension $4$ and rank $2$ is equivalent to the classification of ruled Fano $5$-folds with index two by taking the projectivization $\bP(\sE)$, and, as we mentioned, such a classification is partially done by Novelli and Occhetta.
Thus, when we are dealing with the classification problem of Mukai pairs with $\rank \sE = \dim X-2$, the remaining part to be considered is the case $\dim X=4$, $\rank \sE =2$ and $\rho _X =1$.
The purpose of this paper is to classify Mukai pairs in this missing case, and hence to complete the classification of Mukai pairs of  $\rank \sE = \dim X -2$ in arbitrary dimension. 

\begin{theorem}[Classification of Mukai pairs with dimension $4$ and rank $2$]\label{thm:main}
Let $(X,\sE)$ be a Mukai pair of dimension $4$ and rank $2$.
Assume that $\sE$ is indecomposable and $\rho _X =1$.
Then $(X,\sE)$ is isomorphic to either
\[
(V_4, p^*\sS^* _{\bQ^4} (1)) \text{ or } (V_5, \sS ^*_{V_5} (1)).
\]
Here we use the following symbols:
\begin{itemize}
 \item $V_4$ is a quartic del Pezzo $4$-fold obtained as a double cover $p \colon V_4 \to \bQ ^4$ of the hyperquadric of dimension $4$ branched along a smooth divisor $B \in \left| \cO_{\bQ^4}(2) \right|$.
 \item $\sS_{\bQ^4}$ is a spinor bundle on $\bQ^4$.
 \item $V_5$ is a general linear section of the Grassmannian $\Gr (2,5)$ embedded into the projective space $\bP ^9$ via the Pl\"ucker embedding.
 \item $\sS _{V_5}$ is the restriction of the universal subbundle on $\Gr(2,5)$ to $V_5$.
\end{itemize}
\end{theorem}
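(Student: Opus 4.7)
The plan is to study the projectivization $M \coloneqq \bP_X(\sE)$, a smooth $5$-fold equipped with the $\bP^1$-bundle projection $\pi\colon M\to X$. From $c_1(\sE) = -K_X$ one computes $-K_M = 2\xi$, where $\xi = c_1(\cO_{\bP(\sE)}(1))$, so $M$ is a Fano $5$-fold of index $\geq 2$ with $\rho_M = 2$. Thus the theorem is equivalent to the classification of ruled Fano $5$-folds of index $2$ and Picard number $2$, the case left open by \cite{NO07}, and I would attack it by analysing the second Mori contraction of $M$.

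Since $\rho_M = 2$, the cone $\cNE(M)$ has a unique extremal ray $R_\varphi$ different from the one contracted by $\pi$, giving a second extremal contraction $\varphi\colon M\to Y$. Because $-K_M$ is $2$-divisible, the length of $R_\varphi$ is even and hence $\geq 2$, so Wisniewski's inequality forces every non-trivial fiber of $\varphi$ to have dimension $\geq 1$; in particular $\varphi$ is not small. I would then carry out a case analysis on $\varphi$ according to its type and $\dim Y$. Divisorial contractions are ruled out (or reduced to a single special case) by restricting $\pi$ to the exceptional divisor and exploiting that no fiber of $\pi$ lies inside a fiber of $\varphi$. For fiber-type contractions, the general fiber $F$ of $\varphi$ is a Fano manifold with $\pi|_F$ finite, and the length bound together with Wisniewski's fiber-dimension inequality constrain $(\dim Y, F)$ to a short list.

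In each surviving case the second contraction exhibits $X$ as a Fano $4$-fold of index $r_X \geq 3$ and Picard number one. By the classification of such fourfolds \cite{KO73,Fuj82a,Fuj82b,Muk89}, $X$ is either $\bP^4$, $\bQ^4$, or one of the del Pezzo fourfolds $V_1, \ldots, V_5$. The cases $X = \bP^4, \bQ^4$ and $V_d$ with $d \leq 3$ are excluded by combining the numerical data produced by $\varphi$ with the indecomposability hypothesis on $\sE$, which in particular rules out candidates of the form $\cO(aH_X) \oplus \cO(bH_X)$. For the surviving $X = V_4$ or $V_5$, the bundle $\sE \cong \pi_*\cO_M(\xi)$ is reconstructed from the structure of $\varphi$, and is identified with $p^*\sS^*_{\bQ^4}(1)$ or $\sS^*_{V_5}(1)$ respectively.

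The main obstacle I expect lies in the fiber-type case with $\dim Y = 4$, where $\varphi$ provides a second ``ruling'' of $M$ parallel to $\pi$: one must show that $Y$ is smooth and Fano of the correct numerical type, recognise it as $\bQ^4$ (producing $V_4$ as a double cover via $\pi$) or a linear section of $\Gr(2,5)$ (producing $V_5$ directly), and then intrinsically identify $\sE$ from the resulting projective-bundle data. The indecomposability hypothesis plays its essential role here, excluding pullbacks of line-bundle sums that would otherwise persist.
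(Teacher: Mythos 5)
Your overall frame---pass to $W=\bP(\sE)$, note $-K_W=2\xi$, and analyse the second extremal contraction $\varphi\colon W\to Y$---is exactly the paper's starting point, but the proposal has a genuine gap at its pivot. You assert that ``in each surviving case the second contraction exhibits $X$ as a Fano $4$-fold of index $r_X\geq 3$'' and then invoke the classification of such fourfolds. Nothing in your sketch justifies $r_X\geq 3$. What one can actually prove cheaply is that the minimal anticanonical degree of free rational curves is $l_X=3$ (degree $2$ would force $\sE$ to split, degree $\geq 4$ would force $X\simeq\bP^4$ or $\bQ^4$ where rank-two Fano bundles are classified and decomposable), whence only $r_X\in\{1,3\}$. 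Eliminating $r_X=1$ is precisely the hard content of the paper --- Novelli and Occhetta's earlier treatment assumed $r_X\geq 2$, and the whole point here is to remove that hypothesis. The paper does it by proving $\sE$ is stable, applying the Bogomolov inequality $(c_1^2-4c_2)\cdot H_X^2\leq 0$, and then running Diophantine/Riemann--Roch computations on the Chern numbers extracted from the structure of $\varphi$ (quadric bundle over $\bP^3$, or special B\u anic\u a scroll over $\bP^4$); only at that stage does $(a,r_X)=(1,3)$ and $H_X^4\in\{4,5\}$ emerge, after which the map to a Grassmannian of lines in $Y$ identifies the pair directly, with no appeal to the index-$\geq 3$ classification.

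Two further steps are underspecified in a way that matters. First, your ``short list'' of fiber-type cases needs an a priori bound $\dim F\leq 2$ on all $\varphi$-fibers (to exclude $\dim Y\leq 2$ and three-dimensional jumping fibers); the Ionescu--Wi\'sniewski inequality alone does not give this, and the paper again uses Bogomolov for it. Second, the divisorial case is not ruled out merely by the fact that no $\pi$-fiber sits inside a $\varphi$-fiber: the paper reduces it via the Andreatta--Wi\'sniewski blow-up criterion to a blow-up of a smooth $5$-fold along a surface and then excludes it using the Chierici--Occhetta classification. Without the stability/Bogomolov input and an argument replacing the $r_X\geq 3$ assertion, the proposal does not close.
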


\begin{remark}[Novelli and Occhetta's result]\label{rem:NO}
The classification problem of pairs as in the assumption of Theorem~\ref{thm:main} was also discussed in Section~8 of Novelli and Occhetta's paper \cite{NO07}.
Essentially, they classified such pairs with the extra assumption $r_X \geq 2$, which is not required in this paper.
Also, since the author could not follow their argument (see Remark~\ref{rem:comp}), we will provide a proof of Theorem~\ref{thm:main} that is independent from their argument.
\end{remark}

\subsection{}
Here we briefly sketch the strategy of the proof and give an outline of this paper.
Let $(X,\sE)$ be a Mukai pair as in Theorem~\ref{thm:main}.
  Then $W \coloneqq \bP(\sE)$ is a Fano $5$-fold with index two and $\rho_W=2$.
 We will denote by $\pi \colon W \to X$ the natural projection.
 Since $\rho _W =2$, it admits another contraction $\varphi \colon W \to Y$.
Note that, in each outcome of Theorem~\ref{thm:main}, 
\begin{enumerate}
\item the corresponding manifold $Y$ is isomorphic to a projective space $\bP^m$ and 
\item each $\pi$-fiber gives a line in $Y \simeq \bP^m$.
\end{enumerate}
In our proof, a key step consists of proving these properties for our Mukai pair $(X,\sE)$.
Then we have a morphism $X \to \Gr (2,m+1)$, which will be proved to satisfy the desired description of the pair $(X,\sE)$.

In Section~\ref{sect:pre}, we gather some results on fundamental extremal contractions, such as scrolls, quadric bundles and blow-ups.
Those results will be applied to $\varphi$ in later sections.

In Section~\ref{sect:RC}, we will study rational curves on a Mukai pair $(X,\sE)$ as in Theorem~\ref{thm:main}.
There we will see that $X$ is covered by rational curves with anticanonical degree $3$.
Such rational curves are called \emph{minimal rational curves}.
For a minimal rational curve $f \colon \bP^1 \to X$, the pull-back $f^*\sE$ is isomorphic to $\cO(2) \oplus \cO (1)$, and thus the pull-back of $\bP(\sE)$ over $\bP^1$ is the Hirzebruch surface $\bF_1$.
Then the minimal section of this ruled surface gives a rational curve on $\bP(\sE)$, which we call the \emph{minimal lift} of the minimal rational curve $f \colon \bP^1 \to X$.

In Section~\ref{sect:stable}, we will see that the bundle $\sE$ is stable, and hence the Bogomolov inequality holds for $\sE$ (Proposition~\ref{prop:Bogomolov}).
This inequality is crucial in our proof of Theorem~\ref{thm:main}, and will be used twice: first to give rough description of the elementary contraction $\varphi$ of $\bP(\sE)$ (Section~\ref{sect:rough}), and second to prove that minimal lifts of minimal rational curves are contracted by $\varphi$ (Propositions~\ref{prop:Ineq} and \ref{prop:comp}). 

In Section~\ref{sect:rough}, we will give a rough description of $\varphi$; the morphism $\varphi$ is  either a quadric bundle or a special \Banica scroll (see Definition~\ref{def:sBscroll} and Theorem~\ref{thm:phi}). 
Then, in Sections~\ref{sect:quarticDP} and \ref{sect:quinticDP}, we will complete the proof of Theorem~\ref{thm:main} by studying each case.

\medskip
In the subsequent sections, we frequently use the following setting and notations.
\begin{setting}[Mukai pairs of dimension $4$ and rank $2$]\label{set}
$(X,\sE)$ is a Mukai pair of dimension $4$ and rank $2$ such that $\sE$ is indecomposable and $\rho _X =1$.

 We will use the following notations:
\begin{itemize}
 \item $W \coloneqq \bP(\sE)$ is the Grothendieck projectivization of $\sE$.
 \item $\pi \colon W \to X$ is the natural projection.
 \item $\varphi \colon W \to Y$ is the other elementary contraction.
 \item $\xi=\xi_\sE$ is the tautological divisor of the projectivization $\bP(\sE)$.
 \item $H_X$ (resp.\ $H_Y$) is the ample generator of $\Pic(X)$ (resp.\ $\Pic(Y)$).
 \item $r_X$ is the index of $X$.
 \item $l_X$ is the length of $X$.

 \item $R_\pi \subset \NE(X)$ (resp.\ $R_\varphi\subset \NE(X)$) the ray corresponding to $\pi$ (resp.\ $\varphi$).
 \item $\tR \subset \NE(X)$ is the half line $\bR_{\geq0}[\tC]$ spanned by the class $[\tC]$ of minimal lifts of minimal rational curves.
\end{itemize}
\end{setting}

\begin{convention}
 \hfill
\begin{enumerate}
 \item For a morphism $f \colon X \to Y$ and a coherent sheaf $\sF$ on $Y$, we will denote by $\sF|_X$ the pull-back $f^*\sF$ if no confusion arises.
 \item A \emph{rational curve} $C$ on a Fano manifold $X$ is, by definition, a $1$-dimensional closed subvariety whose normalization is a projective line $\bP^1$.
Sometimes the normalization map $f \colon \bP^1 \to X$ itself is also referred to as a rational curve.
\end{enumerate}
\end{convention}

\begin{acknowledgements}
The author wishes to express his gratitude to Professors Shigeru Mukai and Hiromichi Takagi for their valuable comments and suggestions.
He is also grateful to Professor Gianluca Occhetta for his helpful comments on the paper \cite{NO07}.
The author is also grateful to Doctor Takeru Fukuoka for his helpful comments and discussions on quintic del Pezzo manifolds.
He also wishes to thank Professor Hiraku Kawanoue for useful comments on the proof of Proposition~\ref{prop:Ineq}. 

Main part of this work was done at the Graduate School of Mathematical Sciences, the University of Tokyo, and this paper was prepared at Research Institute for Mathematical Sciences,
Kyoto University.
\end{acknowledgements}

\section{Preliminaries: fundamental extremal contractions}\label{sect:pre}

Here we briefly review some facts on fundamental extremal contractions.
Let $M$ be a smooth projective variety and $\varphi \colon M \to N$ be its elementary contraction, that is, a contraction associated with a $K_M$-negative extremal ray $R$ of $\cNE(M)$.
Recall that the length $l(R_\varphi)$ of the ray $R$ is defined as the minimum anticanonical degree of rational curves that are contracted by $\varphi$:
\[
l(R)\coloneqq\{\, -K_M\cdot C \mid\, \text{$C$ is a rational curve such that $[C] \in R$} \}.
\]
The following is a fundamental inequality, which gives a bound of dimensions of the exceptional locus and fibers from below in terms of length.
\begin{theorem}[{Ionescu-Wi\'sniewski inequality \cite[Theorem~0.4]{Ion86}, \cite[Theorem~1.1]{Wis91}}]\label{thm:IWineq}
 Let $\varphi \colon M \to N$ be an elementary contraction associated to an extremal ray $R$, $E$ an irreducible component of $\Exc (\varphi)$ and $F$ a fiber contained in $E$.
 Then
 \[
 \dim E + \dim F \geq \dim M + l(R) -1.
 \]
\end{theorem}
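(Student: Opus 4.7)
The plan is to combine the deformation theory of morphisms $\bP^1 \to M$ with Mori's bend-and-break technique.

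Fix a point $x\in F$ and consider the irreducible components $V\subset \RC(M)$ parametrizing $R$-curves through $x$; such families exist because $\Exc(\varphi)$ is covered by $R$-curves. Among these, choose $V^{*}$ of minimum anticanonical degree $d^{*}\coloneqq -K_M\cdot C$ for $C\in V^{*}$, so in particular $d^{*}\geq l(R)$. Standard deformation theory of maps $\bP^1\to M$ gives
\[
\dim V^{*}\geq d^{*}+\dim M-3,
\]
since $\dim \Hom(\bP^1,M)_{[f]}\geq \chi(f^{*}T_M)$ and $\Aut(\bP^1)$ is $3$-dimensional. Let $\mathcal{U}^{*}\to V^{*}$ be the universal family, with evaluation $e\colon \mathcal{U}^{*}\to M$. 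Its image $\Locus(V^{*})$ is irreducible, lies in $\Exc(\varphi)$, and contains $x$, hence is contained in the irreducible component $E$. By the fiber-dimension theorem applied to $e$ at $x$,
\[
\dim V^{*}_x\geq \dim V^{*}+1-\dim E,
\]
where $V^{*}_x\subset V^{*}$ parametrizes curves through $x$.

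The crucial step is bend-and-break inside $V^{*}$: a $1$-parameter family of curves in $V^{*}$ passing through both $x$ and a second point $y$ would degenerate to a reducible rational curve whose components each lie in $R$ and have anticanonical degree strictly less than $d^{*}$. At least one such component would pass through $x$, giving an $R$-curve through $x$ of degree smaller than $d^{*}$ and contradicting the minimality of $d^{*}$. Consequently the second evaluation map from the universal family over $V^{*}_x$ to $M$ has generically $0$-dimensional fibers, so $\Locus(V^{*}_x)$ has dimension $\dim V^{*}_x+1$.

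Finally, every curve parametrized by $V^{*}_x$ is $\varphi$-contracted and passes through $x$, so lies in the $\varphi$-fiber through $x$, which is $F$; hence $\Locus(V^{*}_x)\subseteq F$. Combining,
\[
\dim F\geq \dim V^{*}_x+1\geq \dim V^{*}+2-\dim E\geq d^{*}+\dim M-1-\dim E\geq l(R)+\dim M-1-\dim E,
\]
which rearranges to the claimed inequality. The delicate part is the coordination of the three ingredients: one needs to pick the family $V^{*}$ to be of minimal degree \emph{through $x$} rather than of minimal covering degree of $E$, because that is exactly the minimality hypothesis under which bend-and-break yields a contradiction; once this is in place, the rest is a fiber-dimension count.
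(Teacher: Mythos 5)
The paper does not prove this statement at all: it is imported as a black box from Ionescu and Wi\'sniewski, so there is no internal argument to compare yours against. What you have written is, in substance, the standard Mori-theoretic proof of the fiber--locus inequality: the deformation bound $\dim V^{*}\geq -K_M\cdot C+\dim M-3$, bend-and-break to force the evaluation map on the subfamily through $x$ to be generically finite, and a fiber-dimension count; the final chain of inequalities is assembled correctly, and your bend-and-break step correctly uses extremality of $R$ to keep the components of the degeneration inside the ray and minimality of $d^{*}$ \emph{through $x$} to derive the contradiction.

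One step, however, does not hold as stated: the inference that $\Locus(V^{*})$ ``is irreducible, lies in $\Exc(\varphi)$, and contains $x$, hence is contained in the irreducible component $E$.'' An irreducible subset of $\Exc(\varphi)$ containing $x$ is contained in \emph{some} irreducible component, but if $x$ lies on two or more components of $\Exc(\varphi)$ it may be a component $E'\neq E$; your computation then bounds $\dim E'+\dim F$ rather than $\dim E+\dim F$, and since $\dim E'$ may exceed $\dim E$ the stated inequality for $E$ does not follow. The standard repair is to run the argument at a point $x$ that is general in $E$ (such a point avoids every other component, so $\Locus(V^{*})\subseteq E$ is then forced), which proves the inequality for the fiber of $\varphi|_{E}$ through a general point, and then to invoke upper semicontinuity of fiber dimension to get $\dim F\geq\dim F_{\mathrm{gen}}$ for an arbitrary fiber $F$ of $\varphi|_{E}$. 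With that modification, and granting the standard fact you invoke without proof that $\Exc(\varphi)$ is covered by rational curves whose classes lie in $R$, your proof is complete.
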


Assume that the ray $R$ is supported by a Cartier divisor of the form $K_M+rL$, where $L$ is a $\varphi$-ample Cartier divisor and $r \in \bZ_{>0}$ is a positive integer.
Then $l(R) \geq r$, and thus we have
\[
 \dim E + \dim F \geq \dim M + r -1.
\]
Therefore, if $\varphi$ is of fiber type, then $\dim F \geq r-1$.
Hence, by taking $F$ as a general $\varphi$-fiber, we have
\[
\dim M-\dim N +1\geq r.
\]

\begin{definition}[Scrolls and quadric fibrations]
An elementary contraction $\varphi$ is called an \emph{adjunction theoretic scroll}, or simply a \emph{scroll}, if it is of fiber type and there is a $\varphi$-ample line bundle $L \in \Pic (M)$ such that 
\[
K_M + (\dim M -\dim N +1)L
\]
is a supporting divisor for the contraction $\varphi$.

Similarly, the contraction is called a \emph{quadric fibration} if it is of fiber type and there is a $\varphi$-ample line bundle $L \in \Pic (M)$ such that 
\[
K_M + (\dim M -\dim N)L
\]
is a supporting divisor for the contraction $\varphi$.
\end{definition}
Note that in the above definition $\varphi$ is assumed to be  elementary.
Also, in each case, a general $\varphi$-fiber is isomorphic to $\bP^n$ or $\bQ^n$ respectively. 

The projection of a projectivized vector bundle $\bP(\sF) \to N$ is a typical example of scrolls.
Conversely, Fujita proved that an equidimensional scroll $\varphi \colon M \to N$ is a projection of a projectivized vector bundle \cite[Lemma~2.12]{Fuj87}.
The corresponding result for quadric fibrations is proved by Andreatta, Ballico and Wi\'sniewski:
\begin{theorem}[Characterization of quadric bundles {\cite[Theorem~B]{ABW93}}]\label{thm:Qbundle}
 Let $\varphi \colon M \to N$ be an equidimensional quadric fibration (from a smooth manifold $M$).
 Then $N$ is smooth and $\varphi$ is a quadric bundle, that is,  there exists a vector bundle $\sF$ of rank $\dim M -\dim N +2$ on $N$ such that $M$ is embedded into $\bP(\sF)$ as a divisor of relative degree two.
\end{theorem}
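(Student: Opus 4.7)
The plan is to build the vector bundle $\sF$ as a direct image and to realize $M$ inside $\bP(\sF)$ as a relative Cartier divisor of degree two; smoothness of $N$ should then emerge as a byproduct. Set $n := \dim M - \dim N$, and let $L \in \Pic(M)$ be the $\varphi$-ample line bundle with $K_M + nL$ supporting $\varphi$. On a general fiber $F \cong \bQ^n$ one has $L|_F = \cO_{\bQ^n}(1)$ by adjunction, so $h^0(F,L|_F) = n+2$ and $h^i(F,L|_F) = 0$ for $i > 0$.

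The first technical step is to propagate this fiberwise information to every fiber. By the equidimensionality hypothesis every fiber has pure dimension $n$, and an analysis of each scheme-theoretic fiber $F_y$---using that $L|_{F_y}$ polarizes an $n$-dimensional Gorenstein scheme with $-K_{F_y}$ numerically equivalent to $nL|_{F_y}$---shows that $F_y$ is a (possibly singular or non-reduced) quadric hypersurface in $\bP^{n+1}$, and that $\chi(F_y,L|_{F_y}) = n+2$ is independent of $y$. Ruling out pathological degenerations of the fibers is the heart of this step, and one may invoke Theorem~\ref{thm:IWineq} to pin down local invariants.

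Granted this, $\sF := \varphi_* L$ is locally free of rank $n+2$ with $R^i\varphi_* L = 0$ for $i \geq 1$, and the evaluation map $\varphi^*\sF \to L$ is surjective. This defines a morphism $\alpha \colon M \to \bP(\sF)$ over $N$ whose restriction to every fiber of $\pr \colon \bP(\sF) \to N$ embeds a quadric in $\bP^{n+1}$. A fiberwise embedding check combined with properness then forces $\alpha$ to be a closed embedding; its image is an effective Cartier divisor of relative degree two, cut out by a section of $\cO_{\bP(\sF)}(2) \otimes \pr^*\sG$ for some $\sG \in \Pic(N)$.

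The main obstacle I expect is the smoothness of $N$. Since $\pr$ is smooth, $N$ is regular if and only if $\bP(\sF)$ is, and since $M$ surjects onto $N$ and meets every fiber of $\pr$, a singularity of $N$ at a point $p$ would produce singularities of $\bP(\sF)$ along the whole fiber $\pr^{-1}(p)$. The delicate point is to show that this would force $M$ itself to be singular, contradicting the hypothesis: locally on $N$, $M$ is cut out in a trivialization of $\bP(\sF)$ by a quadratic form in the fiber coordinates whose coefficients are regular functions on $N$, and one must leverage the smoothness of $M$ together with the generic smoothness of the quadric fibers to recover regularity of these coefficient functions, and hence of $N$.
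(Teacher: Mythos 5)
This statement is quoted in the paper as Theorem~B of [ABW93] and is not proved there, so there is no internal proof to compare against; I can only measure your proposal against the argument in the cited literature. Your overall plan --- push $L$ forward, show $\sF=\varphi_*L$ is locally free of rank $n+2$, and embed $M$ into $\bP(\sF)$ via the relative evaluation map as a relative quadric --- is indeed the standard strategy and the one Andreatta--Ballico--Wi\'sniewski follow. The problem is that the two steps you yourself flag as ``the heart'' and ``the main obstacle'' are precisely the content of the theorem, and your proposal names them without supplying arguments for them.

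Concretely: (a) the claim that every scheme-theoretic fiber $F_y$ is a quadric with $\chi(F_y,L|_{F_y})=n+2$ does not follow from adjunction on $F_y$, because before $N$ is known to be smooth you do not know that $\varphi$ is flat or that $F_y$ is Gorenstein or a local complete intersection, so ``an $n$-dimensional Gorenstein scheme polarized with $-K\equiv nL$'' is not a hypothesis you are entitled to; and the Ionescu--Wi\'sniewski inequality (Theorem~\ref{thm:IWineq}) only bounds fiber dimensions, which equidimensionality already gives you --- it says nothing about the fibers being quadrics. The published proof identifies the fibers via the Andreatta--Wi\'sniewski local analysis (relative spannedness of $L$ and horizontal/vertical slicing by general members of $|L|$, inducting on dimension) combined with Fujita's classification of polarized pairs of small $\Delta$-genus. (b) There is a circularity in your cohomological step: local freeness of $\varphi_*L$ via Grauert or cohomology-and-base-change needs flatness of $\varphi$ (or constancy of all the $h^i$), and miracle flatness needs $N$ regular --- the very conclusion you postpone to the end; likewise your final smoothness argument presupposes that $M$ is locally cut out by a quadratic form with coefficient functions on $N$, which is the quadric-bundle structure you are trying to establish. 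The actual proof resolves the order of quantifiers by first establishing the local structure of $\varphi$ around each fiber (whence smoothness of $N$ and flatness) and only then globalizing to $\bP(\varphi_*L)$. As written, your text is a correct roadmap with the two essential lemmas left as placeholders rather than a proof.
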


We also need the following characterization of bolw-ups.
\begin{theorem}[Characterization of blow-ups {\cite[Theorem~4.1]{AW93}}]\label{thm:Bl}
Let $r\in \bZ_{>0}$ be a positive integer and $\varphi \colon M \to N$ an elementary birational contraction whose nontrivial $\varphi$-fibers have dimension $r$.
Assume that there is a $\varphi$-ample line bundle $L$ such that  $K_M + rL$ gives a supporting divisor of the contraction $\varphi$.
Then $N$ is smooth and $\varphi$ is a blow-up of N along a smooth subvariety of codimension $r+1$. 
\end{theorem}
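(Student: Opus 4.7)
The plan is to follow the classical adjunction-theoretic strategy, decomposing the analysis into four successive reductions.

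\textbf{Step 1 (Dimensions via Ionescu--Wi\'sniewski).} Since $L$ is $\varphi$-ample, every contracted curve $C$ satisfies $-K_M\cdot C = rL\cdot C \geq r$, hence $l(R_\varphi)\geq r$. Combining this with the assumption that all nontrivial fibers have dimension exactly $r$, Theorem~\ref{thm:IWineq} applied to any irreducible component of $\Exc(\varphi)$ forces $\dim\Exc(\varphi) \geq \dim M -1$. Thus $E:=\Exc(\varphi)$ is a prime divisor, equality holds so $l(R_\varphi)=r$, and $Z:=\varphi(E)$ has pure codimension $r+1$ in $N$; the restriction $\varphi|_E\colon E\to Z$ is equidimensional of relative dimension $r$.

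\textbf{Step 2 (General fiber).} Let $F$ be a general fiber; by generic smoothness $F$ is smooth, irreducible, of dimension $r$. Adjunction combined with the triviality of $K_M+rL$ on fibers gives $-K_F\sim r\,L|_F$. Every rational curve in $F$ has $L|_F$-degree at least $1$, so the pseudoindex of $F$ is $\geq r = \dim F$. By the Cho--Miyaoka--Shepherd-Barron characterization of projective space (or Kebekus' version through a fixed point), $F\cong \bP^r$, and then Kobayashi--Ochiai identifies $L|_F$ with $\cO_{\bP^r}(1)$.

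\textbf{Step 3 (Projective-bundle structure of $E$).} The equidimensional morphism $\varphi|_E\colon E\to Z$ has general fiber $(\bP^r,\cO(1))$ with respect to $L|_E$. Fujita's characterization of equidimensional scrolls (the same result used to deduce that scrolls are projective bundles in the preliminaries) applies: $Z$ is smooth, $E\cong \bP_Z(\sF)$ for a rank $r+1$ vector bundle $\sF$ on $Z$, and $L|_E \cong \cO_{\bP(\sF)}(1)$.

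\textbf{Step 4 (Normal bundle and contraction criterion).} On a general fiber $F\cong\bP^r$, adjunction gives $\det N_{F/M} = K_F\otimes (K_M|_F)^{-1} = \cO(-r-1)\otimes\cO(r) = \cO(-1)$. Since $F$ is a fiber of the projective bundle $E\to Z$, its normal bundle inside $E$ is trivial of rank $\dim Z$, so the sequence $0\to N_{F/E}\to N_{F/M}\to N_{E/M}|_F\to 0$ forces $N_{E/M}|_F\cong\cO_F(-1)$. Hence $N_{E/M}$ is a line bundle on $\bP(\sF)$ of fiberwise degree $-1$; after twisting $\sF$ by an appropriate line bundle on $Z$ we may arrange $N_{E/M}\cong \cO_{\bP(\sF)}(-1)$. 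The Fujiki--Nakano contraction criterion (a smooth divisor $E\subset M$ that is a projective bundle over a smooth base $Z$ with normal bundle isomorphic to the relative tautological bundle contracts along the bundle projection, with smooth target and realizes $M$ as a blow-up) identifies $\varphi$ with $\mathrm{Bl}_Z N\to N$ and shows $N$ is smooth.

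\textbf{Main obstacle.} The hardest input is Step~2: upgrading the numerical data (pseudoindex $\geq r$, $-K_F = rL|_F$) to a \emph{geometric} identification $F\cong \bP^r$ with the correct polarization, since everything downstream depends on being able to invoke Fujita's scroll characterization fiberwise. This uses the deep characterization of projective space via pseudoindex; once this is in hand, Steps~3 and~4 are essentially formal consequences of adjunction and the Fujiki--Nakano criterion, and the equidimensionality from Step~1 is what keeps the fiber-by-fiber reasoning globally valid.
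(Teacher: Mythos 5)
First, note that the paper does not prove this statement: it is quoted verbatim from \cite[Theorem~4.1]{AW93}, so there is no internal proof to compare against. Judged on its own terms, your proposal has a genuine gap at its pivotal point, Step~2. The identity $-K_F\sim rL|_F$ is false for a fiber of a \emph{birational} contraction: adjunction gives $K_F=(K_M)|_F+\det N_{F/M}$, and the normal bundle of $F$ in $M$ is not trivial --- indeed one finds $\det N_{F/M}\cong\cO_F(-1)$ at the end, which is exactly the correction term your own Step~4 computes, so Steps~2 and~4 are circular. Already for the blow-up of a threefold along a curve ($r=1$) one has $-K_F=\cO_{\bP^1}(2)$ while $rL|_F=\cO_{\bP^1}(1)$. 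Consequently the pseudoindex bound on $F$ does not follow, and the Cho--Miyaoka--Shepherd-Barron theorem cannot be invoked. Even if one could establish pseudoindex $\geq\dim F$, the characterization actually available at that threshold (Theorem~\ref{thm:PQ}) yields $\bP^r$ \emph{or} $\bQ^r$, and the pair $(\bQ^r,\cO_{\bQ^r}(1))$ satisfies the same numerology ($-K=r\cdot\cO(1)$, lines of $L$-degree one), so the quadric would still have to be excluded by a separate argument.

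There are secondary problems of the same nature downstream: generic smoothness is applied to $\varphi|_E\colon E\to Z$ although $E$ is not yet known to be smooth (or even normal), and Fujita's scroll lemma is likewise invoked for a morphism whose total space has no established regularity; moreover, the Fujiki--Nakano criterion requires identifying \emph{every} fiber, not merely a general one, as $(\bP^r,\cO(1))$. That identification is precisely the hard content of \cite{AW93}: it is obtained there from their relative base-point-freeness theorem for $K_M+rL$ together with an inductive ``horizontal slicing'' by general relative members of $|L|$, reducing to Fujita's $\Delta$-genus classification. Some input of this kind is unavoidable, and no surrogate for it appears in your argument. Step~1 and Step~4 are essentially sound (modulo the unproven irreducibility of $\Exc(\varphi)$), but they rest entirely on the missing middle.
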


\subsection{Special \Banica scrolls}
A special kind of scrolls, called \emph{\Banica scrolls}, are introduced in \cite{BW96}.
Here we will briefly recall the definition  and some properties of  \Banica sheaves and \Banica scrolls, based on \cite{BW96,AW93}. 

\begin{definition}[\Banica sheaf]
 A \emph{\Banica sheaf} $\sF$ on a normal variety $N$ is a coherent sheaf $\sF$ on $N$ whose projectivization $\bP(\sF)$ is a smooth variety.
\end{definition}
Let $\sF$ be a \Banica sheaf on a normal variety $N$.
Then $M\coloneqq \bP(\sF)$ is a smooth variety.
Let $\varphi \colon M \to N$ be its projection.
Then it is known that the sheaf $\sF$ is reflexive and that the contraction $\varphi$ is an elementary contraction \cite[Lemma~2.2]{BW96}.
Moreover, the contraction $\varphi$ is a scroll.
A scroll obtained in this way is called a \emph{\Banica scroll}.
Conversely, a sufficient condition for a scroll to be a \Banica scroll is given in \cite{AW93,BW96} as follows:

\begin{theorem}[{\cite[Theorem~4.1, Remark~4.12]{AW93}, cf.\ \cite[Proposition~2.5]{BW96}}]\label{thm:Bscroll}
 Let $\varphi \colon M \to N$ be a scroll and  $L \in \Pic (M)$ a $\varphi$-ample line bundle on $M$ that gives a supporting divisor
\[
K_M + (\dim M -\dim N +1)L
\]
for the contraction $\varphi$.

Assume that
\[
\dim F \leq \dim M -\dim N +1
\]
holds for every $\varphi$-fiber $F$.
Then $N$ is smooth and $M \simeq \bP(\varphi_*L)$.
In particular, $\varphi_*L$ is a \Banica sheaf.
\end{theorem}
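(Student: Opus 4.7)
Set $r \coloneqq \dim M - \dim N$, so the supporting divisor is $K_M + (r+1)L$ and the hypothesis on fibers reads $\dim F \leq r+1$. My plan is to follow the strategy of Fujita's characterization of $\bP$-bundles as scrolls, suitably upgraded to allow a single ``jump'' in fiber dimension, so that the output is a reflexive sheaf (a \Banica sheaf) rather than a locally free sheaf. The identification $M \simeq \bP(\varphi_* L)$ will be built from the evaluation map $\varphi^* \varphi_* L \to L$, and the smoothness of $N$ will follow afterwards from the smoothness of $M$.

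First I would analyze the fibers. A general fiber $F$ is of dimension $r$ by generic flatness, and on it we have $K_F + (r+1)L|_F \equiv 0$ with $L|_F$ ample, so Kobayashi--Ochiai gives $(F,L|_F)\simeq(\bP^{r},\cO(1))$. For an arbitrary $\varphi$-fiber $F$, Theorem~\ref{thm:IWineq} forces $\dim F \geq r$, and combined with the hypothesis $\dim F \leq r+1$ one gets $\dim F \in \{r, r+1\}$. On any fiber of dimension $r+1$, the adjunction relation $K_F + (r+1)L|_F = 0$ together with ampleness of $L|_F$ gives $(F,L|_F)\simeq(\bP^{r+1},\cO(1))$ by Kobayashi--Ochiai again. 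Let $S \subset N$ be the (closed) locus over which the fiber has dimension $r+1$; a dimension count together with the irreducibility and smoothness of $M$ shows that $\varphi^{-1}(S) \subset M$ cannot contain a divisor, and hence $\codim_N S \geq 2$.

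Next I would set up cohomology and base change for $L$. Since on every fiber $L$ restricts to $\cO_{\bP^k}(1)$ with $k \geq r$, one has $H^i(F, L|_F) = 0$ for all $i > 0$ and every fiber, so $R^i\varphi_* L = 0$ for $i>0$ and $\varphi_* L$ is compatible with base change. In particular $\varphi_* L$ is locally free of rank $r+1$ on the open set $N\setminus S$ and its rank jumps to $r+2$ at points of $S$. Let $\sF \coloneqq (\varphi_* L)^{\vee\vee}$; this is a reflexive sheaf of rank $r+1$ which agrees with $\varphi_* L$ on $N\setminus S$. The evaluation morphism $\varphi^*\varphi_* L \to L$ is surjective over $N\setminus S$ (since $\cO_{\bP^r}(1)$ is globally generated there), so it induces a morphism $M \setminus \varphi^{-1}(S) \to \bP(\sF)|_{N\setminus S}$ over $N\setminus S$ which is the identity on each fiber, hence an isomorphism.

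The main obstacle, and the delicate step, is extending this isomorphism across $\varphi^{-1}(S)$. Here one uses that $\codim_N S \geq 2$, together with the reflexivity of $\sF$, to extend the birational map $M \dashrightarrow \bP(\sF)$ to a morphism; the content is that sections of $\sF$ separate the points of jump fibers, which follows from $L|_F \simeq \cO_{\bP^{r+1}}(1)$ being very ample, once one checks that the restriction map from $H^0(\varphi^{-1}U,L)$ to $H^0(F,L|_F)$ is surjective for small neighborhoods $U$ of a point of $S$ (a consequence of $R^1\varphi_* L = 0$ and base change). The resulting $M \to \bP(\sF)$ is then an isomorphism on fibers, hence everywhere. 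Finally, since $\bP(\sF) \simeq M$ is smooth and $\sF$ is reflexive, a local computation (or \cite[Lemma~2.2]{BW96}) forces $N$ to be smooth, proving at the same time that $\sF=\varphi_* L$ is a \Banica sheaf.
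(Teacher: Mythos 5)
The paper does not actually prove this statement: Theorem~\ref{thm:Bscroll} is imported verbatim from \cite[Theorem~4.1, Remark~4.12]{AW93} and \cite[Proposition~2.5]{BW96}, so there is no in-paper argument to compare against. Your sketch does follow the general strategy of those references (and of Fujita's characterization of scrolls): classify the fibers, get base change for $\varphi_*L$, and build the isomorphism from the evaluation map $\varphi^*\varphi_*L \to L$. However, the two steps you treat as routine are precisely the technical core of the theorem, and your justifications for them do not work as written.

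First, for a fiber $F$ of dimension $r+1$ you invoke ``the adjunction relation $K_F+(r+1)L|_F=0$'' and Kobayashi--Ochiai. But a special fiber of an extremal contraction is a priori not irreducible, not reduced, not normal, and not a local complete intersection in $M$, so neither adjunction nor Kobayashi--Ochiai (which needs at least normality, and whose singular generalizations are delicate) applies directly. Establishing that every jump fiber is $(\bP^{r+1},\cO(1))$ is exactly what \cite{AW93} accomplishes via horizontal slicing and relative Kawamata--Viehweg vanishing; you are assuming the hardest part. Second, your appeal to cohomology and base change presupposes that $L$ is flat over $N$, and flatness of $\varphi$ fails precisely along the jump locus $S$ where you need it. The surjectivity of $\varphi_*L\otimes k(y)\to H^0(F,L|_F)$ at $y\in S$ --- which you need both to make the evaluation map surjective along jump fibers and to separate points there --- must be obtained by other means (again, vanishing on a resolution or the slicing technique), not from the fiberwise vanishing of $H^1$ alone. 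Finally, you never show that $\varphi_*L$ coincides with its reflexive hull $\sF$ (the conclusion is $M\simeq\bP(\varphi_*L)$, not $\bP(\sF)$), and the smoothness of $N$ along $S$ is asserted rather than derived. So the outline is the right one, but as a proof it has genuine gaps at the points where the cited references do real work.
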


\begin{definition}[Special \Banica scrolls]\label{def:sBscroll}
 A scroll satisfying the assumptions in Theorem~\ref{thm:Bscroll} is called a \emph{special \Banica scroll}. 
\end{definition}

A more geometric description of special \Banica scrolls is presented in \cite{AW93} as follows:
Let $\varphi \colon M \to N$ be a special \Banica scroll and $Z \subset N$ the set of points over which the dimension of fibers is $\dim M - \dim N +1$.
Set $E \coloneqq \varphi^{-1} (Z)$.

\begin{proposition}[{Geometry of special \Banica scrolls \cite[Remark~4.13]{AW93}}]\label{prop:Bscroll}
 Let the notation be as above.
 Then the following hold:
\begin{enumerate}
 \item $Z$ is smooth of codimension $\dim M -\dim N +2$.
 \item The morphism $\varphi|_{E} \colon E \to Z$ is a smooth $\bP^{\dim M -\dim N +1}$-bundle.
 \item We have the following commutative diagram with $\bP^{\dim M -\dim N}$-bundle $\tvarphi$
 \[
 \xymatrix{
 \tM \ar[r]^-{\tvarphi} \ar[d] & \tN \ar[d]\\
 M \ar[r]^-\varphi & N,
 }
 \]
 where $\tM \to M$ is the blow-up of $M$ along $E$, and $\tN \to N$ the blow-up of N along $Z$.
\end{enumerate}
\end{proposition}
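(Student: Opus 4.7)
The starting point is Theorem~\ref{thm:Bscroll}, which identifies $M\cong\bP(\sF)$ with $\sF=\varphi_*L$ a reflexive sheaf of rank $d+1$ on the smooth variety $N$, where I abbreviate $d\coloneqq\dim M-\dim N$. The locus $Z\subset N$ is then precisely the non-locally-free locus of $\sF$, i.e.\ the set of points $z\in N$ where the fiber $\sF\otimes k(z)$ has dimension $d+2$ instead of the generic value $d+1$.

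Next I would analyze $\sF$ near a point $z\in Z$ via a minimal free presentation
\[
0 \longrightarrow \sL \xrightarrow{\,(s_1,\ldots,s_{d+2})\,} \cO_N^{\oplus(d+2)} \longrightarrow \sF \longrightarrow 0,
\]
where $\sL$ is necessarily a line bundle (being a rank-one reflexive sheaf on the smooth variety $N$) and the $s_i$ are local sections of $\sL^{-1}$ vanishing at $z$. This exhibits $M=\bP(\sF)$ locally as the incidence hypersurface $\{\sum s_iX_i=0\}\subset N\times\bP^{d+1}$, and the Jacobian criterion applied to the smoothness of $M$ forces the map $(s_1,\ldots,s_{d+2})$ to have maximal rank $d+2$ at every point of $Z$. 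This yields part (1): $Z$ is smooth of codimension $d+2$, cut out by the regular sequence $s_1,\ldots,s_{d+2}$. Since $s|_Z=0$, the presentation also shows that $\sF|_Z$ is a rank-$(d+2)$ vector bundle on $Z$, and hence $E=\varphi^{-1}(Z)=\bP(\sF|_Z)\to Z$ is a smooth $\bP^{d+1}$-bundle, proving part (2).

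For part (3), let $\pi_N\colon\tilde N\to N$ be the blow-up along $Z$, with exceptional divisor $E_{\tilde N}$; by (1), $\tilde N$ is smooth and $E_{\tilde N}\to Z$ is a $\bP^{d+1}$-bundle. Since the $s_i$ vanish on $Z$, the pulled-back map $\pi_N^*\sL\to\cO_{\tilde N}^{\oplus(d+2)}$ factors through $\cO_{\tilde N}^{\oplus(d+2)}(-E_{\tilde N})$, and the regular-sequence property guarantees that the factored map $\tilde s$ fits in a short exact sequence
\[
0 \longrightarrow \pi_N^*\sL \xrightarrow{\,\tilde s\,} \cO_{\tilde N}^{\oplus(d+2)}(-E_{\tilde N}) \longrightarrow \tilde\sF \longrightarrow 0
\]
with $\tilde\sF$ locally free of rank $d+1$. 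Setting $\tilde M\coloneqq\bP(\tilde\sF)$ and letting $\tilde\varphi\colon\tilde M\to\tilde N$ be the projection produces the desired $\bP^d$-bundle. The inclusion $\tilde M\subset\tilde N\times\bP^{d+1}$ combined with $\pi_N$ yields a natural morphism $\tilde M\to M$ making the square commute.

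The main obstacle is verifying that this morphism $\tilde M\to M$ coincides with the blow-up of $M$ along $E$. I would argue via Theorem~\ref{thm:Bl}: a direct computation with the local defining equations shows that $\tilde M\to M$ is an isomorphism over $M\setminus E$, and that the fiber over a point of $E$ lying above $z\in Z$ is a hyperplane $\bP^d\subset\bP^{d+1}\cong\pi_N^{-1}(z)$. Since $E\subset M$ has codimension $d+1$, these fiber dimensions match what is expected of a blow-up. The delicate remaining step is to exhibit an explicit $\pi_M$-ample supporting divisor of the form $K_{\tilde M}+dL'$ for the contraction $\pi_M\colon\tilde M\to M$, built from the tautological class of $\bP(\tilde\sF)$ suitably twisted by $E_{\tilde N}$, so that Theorem~\ref{thm:Bl} applies to conclude $\tilde M=\mathrm{Bl}_E M$.
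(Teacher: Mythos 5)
The paper offers no proof of this proposition at all --- it is imported verbatim from \cite[Remark~4.13]{AW93} --- so the only comparison possible is with the cited reference, whose standard argument your proposal essentially reconstructs. Parts (1) and (2) of your argument are sound: writing $d=\dim M-\dim N$, the kernel of a minimal presentation $\cO_N^{\oplus(d+2)}\to\sF$ at a point of $Z$ is saturated, hence reflexive of rank one, hence invertible on the smooth $N$; the identification of $\bP(\sF)$ with the divisor $\{\sum s_iX_i=0\}\subset N\times\bP^{d+1}$ is legitimate because $\mathrm{Sym}(\sF)$ is the quotient of $\mathrm{Sym}(\cO^{\oplus(d+2)})$ by the ideal generated by the degree-one image of $\sL$; and evaluating the differential of $\sum s_iX_i$ along the fiber $\{z\}\times\bP^{d+1}$ gives $\sum X_i\,ds_i(z)$, so smoothness of $M$ along the entire jumping fiber is precisely linear independence of the $ds_i(z)$. (You should also note that the special-\Banica hypothesis $\dim F\le d+1$ is what guarantees the fiber dimension of $\sF$ never exceeds $d+2$, so that $Z$ really is the common zero locus of the $s_i$.)

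The genuine gap is the final identification $\bP(\widetilde\sF)\cong\tM=\mathrm{Bl}_EM$ in part (3), which you explicitly leave open, and the route you propose for it is the wrong one. Theorem~\ref{thm:Bl} applies to \emph{elementary} contractions, whereas $\bP(\widetilde\sF)\to M$ has relative Picard number equal to the number of connected components of $Z$, so it need not be elementary; and even granting elementarity you would still have to produce the supporting divisor $K+dL'$, which you do not. Neither detour is needed: in your own local charts one has $s_i=t\,u_i$ with $t$ an equation of $E_{\tN}$ and $(u_1,\dots,u_{d+2})$ generating the unit ideal, so the ideal $I_E=(s_1,\dots,s_{d+2})\cdot\cO_M$ pulls back on $\bP(\widetilde\sF)$ to the invertible ideal $(t)$, and the universal property of blowing up gives a morphism $\bP(\widetilde\sF)\to\mathrm{Bl}_EM$ over $M$. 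Equivalently, $\mathrm{Bl}_EM$ is the closure in $M\times\bP^{d+1}_{[u]}$ of the graph of $[s_1:\cdots:s_{d+2}]$, while $\bP(\widetilde\sF)$ embeds in $M\times\bP^{d+1}_{[u]}$ as an irreducible closed subvariety containing that graph as a dense open subset; hence the two coincide. Replacing your appeal to Theorem~\ref{thm:Bl} by this direct argument closes the gap using only data you have already written down.
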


\section{Rational curves and Mukai pairs}\label{sect:RC}

 Let $X$ be a smooth Fano manifold.
Recall that a rational curve $f \colon \bP^1 \to X$ is called \emph{free} if the pull-back of the tangent bundle $f^*T_{X}$ is a nef vector bundle on $\bP^1$.
Then the \emph{length} $l_X$ is defined as the minimum anticanonical degree of free rational curves on $X$.  
Note that we have $l_X \geq r_X \geq 1$ by definition and that these values are at most $\dim X +1$ by Mori's theorem \cite{Mor79}.
This value $l_X$ is of particular interest because of the following  theorem, which gives a characterization of projective space and hyperquadric in terms of length.
\begin{theorem}[{Characterization of projective space and hyperquadric \cite{CMSB02,Miy04a}, cf.\ \cite{Keb02,DH17}}]\label{thm:PQ}
Let $X$ be a Fano manifold with Picard number one.
If $l_X \geq \dim X$, then $X$ is isomorphic to either a projective space $\bP^n$ or a hyperquadric $\bQ^n$.
\end{theorem}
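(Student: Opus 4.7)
The plan is to follow the classical strategy for characterizing projective space and hyperquadrics via the deformation theory of minimal rational curves, in the spirit of Cho-Miyaoka-Shepherd-Barron with the refinements of Kebekus. Fix a minimal dominating family $\mathcal{H}$ of rational curves on $X$, so that every member $C$ of $\mathcal{H}$ satisfies $-K_X \cdot C = l_X$ and the family covers $X$. For a general point $x \in X$, let $\mathcal{H}_x$ denote the normalization of the subscheme parametrizing members passing through $x$. Standard deformation theory, combined with the hypothesis $\rho_X = 1$, gives that $\mathcal{H}_x$ is projective of pure dimension $l_X - 2 \geq n-2$, that its members are free, and that for a general $[f] \in \mathcal{H}_x$ one has
\[
f^* T_X \simeq \cO(2) \oplus \cO(1)^{\oplus (l_X - 2)} \oplus \cO^{\oplus (n - l_X + 1)}.
\]

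Consider next the tangent map $\tau_x \colon \mathcal{H}_x \to \bP(T_{X,x})$ sending $[f]$ to the tangent direction $[df(T_{\bP^1, 0})]$. By a theorem of Kebekus, $\tau_x$ is a finite morphism, so its image $\sC_x \coloneqq \tau_x(\mathcal{H}_x) \subset \bP(T_{X,x}) \simeq \bP^{n-1}$ is a closed subvariety of dimension $l_X - 2 \in \{n-2, n-1\}$.

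If $l_X = n+1$, then $\sC_x$ fills out the whole of $\bP^{n-1}$; a bend-and-break argument of Mori combined with the Cho-Miyaoka-Shepherd-Barron criterion identifies $X$ with $\bP^n$. If instead $l_X = n$, then $\sC_x$ is a hypersurface in $\bP^{n-1}$, and the heart of the proof consists in showing that this hypersurface is either a hyperplane (so $X \simeq \bP^n$) or a smooth quadric (so $X \simeq \bQ^n$). The degree of $\sC_x$ is controlled by bend-and-break, since a reducible degeneration of a minimal member would produce a rational curve of smaller anticanonical degree through $x$ and contradict the minimality of $\mathcal{H}$; the precise shape of $\sC_x$ then determines $X$ through the Hwang-Mok theory of varieties of minimal rational tangents, in which projective spaces and hyperquadrics are characterized by a linear, respectively quadric, variety of minimal rational tangents.

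The main obstacle lies in this boundary case $l_X = n$: one must rule out singular, reducible or non-reduced tangent images, and check that $\sC_x$ is smooth of degree at most two. This requires a detailed local analysis of $\mathcal{H}_x$ near reducible or singular minimal members, which is precisely what is carried out in the cited works of Cho-Miyaoka-Shepherd-Barron, Miyaoka, Kebekus, and Dedieu-Höring, and we would use those results as black boxes rather than reprove them here.
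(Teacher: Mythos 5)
The paper does not prove this statement at all: it is quoted directly from the literature (Cho--Miyaoka--Shepherd-Barron, Miyaoka, Kebekus, Dedieu--H\"oring), and your proposal is an accurate outline of the strategy of exactly those references, likewise deferring every substantive step --- properness of $\mathcal{H}_x$, finiteness of the tangent map, and the analysis of the variety of minimal rational tangents in the boundary case $l_X = n$ --- to them as black boxes. So your treatment is essentially the same as the paper's (a citation); the sketch itself is consistent with the cited works and contains no visible errors, but as written it is a roadmap rather than a self-contained proof.
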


In this section, we will prove $l_X=3$ for a Mukai pair as in Setting~\ref{set} (Lemma~\ref{lem:lX}).
Thus, we have a family of rational curves on $X$ whose anticanonical degree is $3$.
Further, we will construct minimal liftings of these rational curves (Definition~\ref{def:lift} and Lemma~\ref{lem:lift}).

The following lemma will be frequently used later.
\begin{lemma}[Splitting type of $\sE$]\label{lem:splitting}
 Let $(X,\sE)$ be a Mukai pair of dimension $n$ and rank $r$, and $f \colon \bP^1 \to C \subset X$ a rational curve on $X$.
 Then the following hold:
\begin{enumerate}
 \item \label{lem:splitting1} $f^* \sE \simeq \bigoplus_{i=1}^r \cO(a_i)$ for positive integers $a_i \in \bZ_{>0}$ such that $\sum_{i=1}^r a_i = -K_X \cdot C$.
 \item \label{lem:splitting2} $n+1 \geq l_X \geq r$.
\end{enumerate}
\end{lemma}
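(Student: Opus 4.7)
\medskip
\noindent\textbf{Proof plan.}
For part \eqref{lem:splitting1}, the plan is to apply Grothendieck's theorem on vector bundles over $\bP^1$ to obtain a splitting $f^*\sE \simeq \bigoplus_{i=1}^{r} \cO(a_i)$. The positivity $a_i>0$ should then follow from ampleness: since $\sE$ is ample on $X$, the pullback $f^*\sE$ is ample on $\bP^1$, and an ample vector bundle on $\bP^1$ has only strictly positive summands (for instance, because any quotient line bundle $\cO(a_i)$ of an ample bundle must be ample). For the degree sum, I would use the Mukai condition $c_1(\sE) = c_1(X) = -K_X$ and compute
\[
\sum_{i=1}^{r} a_i \;=\; \deg (f^*\sE) \;=\; c_1(\sE) \cdot C \;=\; -K_X \cdot C,
\]
where the second equality uses that $f$ is the normalization so $f_*[\bP^1] = [C]$.

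For part \eqref{lem:splitting2}, the lower bound $l_X \geq r$ is immediate from \eqref{lem:splitting1}: if $f \colon \bP^1 \to X$ is any free rational curve then $-K_X\cdot C = \sum a_i \geq r$ by the positivity of each $a_i$, so the minimum over all free rational curves is at least $r$. For the upper bound $l_X \leq n+1$, I would invoke Mori's bend-and-break: through any point of a Fano manifold $X$ there is a rational curve of anticanonical degree at most $\dim X+1$, and a curve of minimal anticanonical degree in a covering family is free (since the deformations through a general point sweep out $X$, the pullback of $T_X$ has enough global sections to be nef). Thus $l_X \leq n+1$.

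The main obstacle is essentially notational/conceptual rather than substantive: one only needs to know the right off-the-shelf facts (Grothendieck's splitting, ampleness on $\bP^1$, Mori's theorem). No serious computation or geometric input is required beyond the hypothesis $c_1(\sE)=c_1(X)$. I expect the full proof to be only a few lines.
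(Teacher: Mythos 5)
Your proposal is correct and follows essentially the same route as the paper: Grothendieck's splitting theorem plus ampleness of $f^*\sE$ (as a pullback under the finite normalization map) for part \eqref{lem:splitting1}, and then Mori's theorem for the upper bound together with the positivity of the $a_i$ for the lower bound in part \eqref{lem:splitting2}. The paper's proof is exactly this short argument, citing \cite{Mor79} for $l_X \leq \dim X + 1$.
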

\begin{proof}
By Grothendieck's theorem, every vector bundle on $\bP^1$ is isomorphic to a direct sum of line bundles.
Thus, $f^* \sE \simeq \bigoplus_{i=1}^r \cO(a_i)$ for integers $a_i \in \bZ$.
Since $\sE$ is ample and $c_1(\sE)=c_1(X)$, each $a_i$ is positive and $\sum_{i=1}^r a_i = -K_X \cdot C$.
This proves the first assertion.

As we mentioned, $l_X \leq \dim X +1$ by Mori's theorem \cite{Mor79}.
Also, it follows from \ref{lem:splitting1} that $l_X \geq r$.
\end{proof}

 A \emph{family of rational curves} on $X$ is an irreducible component $M$ of the parameter space $\RC ^n (X)$  for the rational curves on $X$.
For an account of the theory of rational curves, we refer the reader to \cite{Kol96}.
Given a family $M$ of rational curves, there is the following diagram, which gives the realization of the family:
\begin{equation}\label{dia:UF}
\xymatrix{
 U \ar[r]^-e \ar[d]^-p & X \\
 M.
}
\end{equation}
Here $p \colon U \to M$ is the universal family of rational curves, which is known to be a smooth $\bP^1$-fibration,  and $e$ is the evaluation map of rational curves.
Roughly speaking, each point $m \in M$ corresponds to a rational curve $C \subset X$, and the map $e|_{p^{-1}(m)} \colon p^{-1}(m) \to X$ gives the normalization map of $C$.
Given a family $M$ of rational curves, its \emph{anticanonical degree} is defined as the anticanonical degree of a rational curve parametrized by $M$.

Recall that a family $M$ of rational curves is called \emph{covering} (resp.\ \emph{dominating}) if the evaluation map $e$ is surjective (resp.\ \ \emph{dominating}), and it is called \emph{unsplitting} if $M$ is proper.
Note that a family of rational curves is dominating if and only if a general rational curve in this family is free \cite[Proposition~1.1]{KMM92a}.

The next proposition is formulated in \cite{Kan17} for $n\geq 5$, while its proof works for a slightly weaker assumption $n\geq4$.
For the proof, we refer the reader to  \cite[Proof of Propositions~1.7 and 1.10]{Kan17}.
\begin{proposition}[{\cite[Propositions~1.7 and 1.10]{Kan17}}]\label{prop:RCn-2}
 Let $(X,\sE)$ be a Mukai pair of dimension $n \geq 4$ and rank $n-2$ with $\rho_X = 1$.
 Then the following hold: 
\begin{enumerate}
 \item \label{prop:RCn-21} There exists an unsplit covering family $M$ of rational curves with anticanonical degree $l_X$.
 \item \label{prop:RCn-22} $X$ is chain connected by rational curves in the family $M$, that is, for each pair of points $x,y \in X$, there is a connected chain of rational curves in $M$ which contains both $x$ and $y$.
 \item \label{prop:RCn-23} If $l_X = n-2$, then $\sE$ is a direct sum of line bundles.
\end{enumerate}
\end{proposition}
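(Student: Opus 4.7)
My plan is to first produce a minimal dominating family of rational curves on $X$ by Mori-theoretic minimization, then use $\rho_X = 1$ to force chain-connectedness of this family, and finally exploit the arithmetic equality $l_X = \rank \sE$ to rigidify the splitting type of $\sE$ along the family and thereby split $\sE$ globally.

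For (1), Mori's theorem ensures $X$ is uniruled, so there is an irreducible component $M \subset \RC^n(X)$ whose general member is a free rational curve; choose one with minimal anticanonical degree, which by definition equals $l_X$. Since $\rho_X = 1$ and $M$ is dominating, $M$ is covering. Unsplitness follows by a bend-and-break argument: if $M$ were not proper, its boundary in the space of stable maps would consist of reducible rational trees whose components have strictly smaller anticanonical degree, and at least one such component would still pass through a general point of $X$ and hence deform to a covering family of degree strictly less than $l_X$, contradicting minimality. For (2), the $M$-chain-rc quotient is a rational map $X \dashrightarrow Z$ with proper chain-equivalence classes (by unsplitness); were $Z$ of positive dimension, a curve on a resolution of $Z$ would pull back to a class in $\cNE(X)$ not proportional to $[C]$ for $C \in M$, contradicting $\cNE(X) = \bR_{\geq 0}[C]$ forced by $\rho_X = 1$.

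For (3), assume $l_X = n-2$. For any $f \in M$, Lemma~\ref{lem:splitting}(1) gives $f^*\sE \simeq \bigoplus_{i=1}^{n-2} \cO(a_i)$ with $a_i \geq 1$ and $\sum a_i = n-2$, forcing $f^*\sE \simeq \cO(1)^{\oplus(n-2)}$. Thus the projectivization $\pi\colon \bP(\sE) \to X$ restricts over each $C \in M$ to a trivial $\bP^{n-3}$-bundle $\bP^{n-3}\times\bP^1$, equipped with $n-2$ disjoint minimal sections coming from the summand projections. I would then propagate these sections globally using the chain-connectedness of (2): any two such sections cannot meet on any single $\pi^{-1}(C)$ by degree reasons, so they remain disjoint as $C$ varies through the chain; this yields $n-2$ mutually disjoint divisors of relative degree one on $\bP(\sE)$, i.e.\ $n-2$ quotient line bundles $\sE \twoheadrightarrow \sL_i$, forcing $\sE \simeq \bigoplus_i \sL_i$.

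The main obstacle is the last step. The fibrewise decomposition $\cO(1)^{\oplus(n-2)}$ is not canonical and may vary continuously with $C \in M$, so the challenge is to extract a globally coherent choice of $n-2$ sections of $\pi$. Concretely, one must identify an irreducible component of the relative Hilbert scheme of $\pi$-minimal sections over $X$ whose generic fiber over $M$ consists of $n-2$ distinct reduced points, and verify via chain-connectedness that the associated sections remain mutually disjoint across $X$. Once this is in hand, the line bundles $\sL_i$ are extracted from the corresponding divisors and the decomposition of $\sE$ follows formally; the other implications are standard consequences of Mori theory in Picard rank one.
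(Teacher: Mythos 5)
First, note that the paper does not reprove this proposition: it is quoted from \cite[Propositions~1.7 and 1.10]{Kan17} with the remark that the proof there works verbatim for $n\geq 4$, so your attempt can only be measured against the standard arguments. Your parts (1) and (2) follow those lines and are essentially correct, with one caveat: the bend-and-break argument you give for unsplitness only shows that the subfamily of curves through a \emph{general} point is proper --- a boundary cycle sitting over a special point of $X$ has components through special points, which need not be free, so they do not ``deform to a covering family''. Properness of the whole family $M$ comes instead from Lemma~\ref{lem:splitting}: \emph{every} rational curve on $X$ satisfies $-K_X\cdot C\geq n-2$, so a degeneration of a curve of degree $l_X\leq n+1$ into two or more components is numerically impossible once $l_X<2(n-2)$, the borderline cases being disposed of via Theorem~\ref{thm:PQ}. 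This is a fixable but real omission.

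The serious gap is in (3). Over a minimal curve one has $f^*\sE\simeq\cO(1)^{\oplus(n-2)}$, and the minimal sections of $\bP(f^*\sE)\to\bP^1$ are parametrized by the degree-one quotients $\cO(1)^{\oplus(n-2)}\twoheadrightarrow\cO(1)$, i.e.\ by a $\bP^{n-3}$ --- a \emph{positive-dimensional} family for every $n\geq4$. There is no component of the relative Hilbert scheme whose fibre over $M$ consists of ``$n-2$ distinct reduced points'': precisely because all the $a_i$ are equal, no finite distinguished set of sections exists. Worse, for $n\geq5$ the objects you want cannot exist even when $\sE$ does split: two relative hyperplanes in a $\bP^{n-3}$-bundle meet in every fibre as soon as $n-3\geq1$, so ``$n-2$ mutually disjoint divisors of relative degree one'' is unattainable, and disjointness is in any case not the right condition (one needs the $n-2$ hyperplanes to have empty common intersection in each fibre). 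In the only case where your picture makes literal sense ($n=4$, rank $2$), producing two disjoint sections of $\bP(\sE)\to X$ is \emph{equivalent} to the splitting you are trying to prove, so ``propagation by chain-connectedness'' begs the question. The actual mechanism is different: since $f^*(\sE\otimes\sE^*)\simeq\cO^{\oplus(n-2)^2}$ for every curve of the connecting family, the standard triviality criterion for vector bundles that restrict trivially to all members of an unsplit chain-connecting family gives $\sE\otimes\sE^*\simeq\cO^{\oplus(n-2)^2}$ globally; evaluation at a point then identifies $H^0(\sE\otimes\sE^*)$ with the full matrix algebra $M_{n-2}(\bC)$, and the images of $n-2$ orthogonal rank-one idempotents decompose $\sE$ into a direct sum of line bundles. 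Your proposal is missing this idea.
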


Now we can prove $l_X =3$ for a pair as in Setting~\ref{set}.
\begin{proposition}\label{lem:lX}
Let $(X,\sE)$ be a Mukai pair as in Setting~\ref{set}.
Then $l_X=3$.
In particular, $r_X =1$ or $3$.
\end{proposition}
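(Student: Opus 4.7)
The plan is to establish the two bounds $l_X \geq 3$ and $l_X \leq 3$ separately, after which the divisibility claim on $r_X$ is immediate.

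For the lower bound, Lemma~\ref{lem:splitting}\ref{lem:splitting2} gives $l_X \geq \rank \sE = 2$ directly. The remaining case $l_X = 2$ coincides with $l_X = \dim X - 2$, and Proposition~\ref{prop:RCn-2}\ref{prop:RCn-23} would then force $\sE$ to be a direct sum of line bundles, contradicting the indecomposability hypothesis in Setting~\ref{set}. Hence $l_X \geq 3$.

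For the upper bound, Lemma~\ref{lem:splitting}\ref{lem:splitting2} already yields $l_X \leq \dim X + 1 = 5$. Assume for contradiction that $l_X \in \{4,5\}$. Since $l_X \geq \dim X$, Theorem~\ref{thm:PQ} forces $X \simeq \bP^4$ or $X \simeq \bQ^4$. In each case one must verify that there is no indecomposable rank-$2$ ample bundle $\sE$ with $c_1(\sE) = c_1(X)$. My plan is to restrict $\sE$ to a general line $\ell$: by Lemma~\ref{lem:splitting}\ref{lem:splitting1} we get $\sE|_\ell \simeq \cO(a)\oplus \cO(b)$ with $a+b = l_X$ and $a\geq b \geq 1$, and the generic (balanced) splitting type gives, on an open subset, a distinguished sub-line-bundle of $\sE$; after saturation this spreads to a global sub-line-bundle $L \hookrightarrow \sE$. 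The quotient has a codimension $\geq 2$ non-locally-free locus, and once one realises $\sE$ as an extension of line bundles (or of a line bundle by an ideal sheaf twist of a line bundle), one appeals to the vanishing of $H^1$ of every line bundle on $\bP^4$ and $\bQ^4$ (Bott's theorem) to split the extension, contradicting indecomposability. Alternatively, one can simply quote existing results on rank-$2$ ample vector bundles on $\bP^n$ and $\bQ^n$ with prescribed first Chern class.

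Once $l_X = 3$ is established, the final claim follows: for a minimal free rational curve $C$ one has $3 = l_X = -K_X\cdot C = r_X(H_X\cdot C)$ with $H_X\cdot C \geq 1$, so $r_X \mid 3$ and hence $r_X \in \{1,3\}$.

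The main obstacle is the exclusion of $X\simeq \bP^4$ and $X \simeq \bQ^4$: producing a global sub-line-bundle of $\sE$ from the generic splitting on lines and then controlling the resulting extension requires a careful, case-sensitive argument (or an appeal to specific classification results), and this is where the bulk of the work lies.
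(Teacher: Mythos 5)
Your overall structure matches the paper's proof exactly: $l_X\geq 2$ and $l_X\leq 5$ from Lemma~\ref{lem:splitting}, exclusion of $l_X=2$ via Proposition~\ref{prop:RCn-2}~\ref{prop:RCn-23} and indecomposability, reduction of $l_X\geq 4$ to $X\simeq\bP^4$ or $\bQ^4$ via Theorem~\ref{thm:PQ}, and finally $r_X\mid l_X=3$. The one place where you diverge is the exclusion of $\bP^4$ and $\bQ^4$: the paper simply invokes the classification of rank-two Fano bundles on $\bP^n$ and $\bQ^n$ (Ancona--Peternell--Wi\'sniewski), which is exactly your ``alternatively, quote existing results'' fallback, and that is the clean way to finish. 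Your hand-crafted version is not sound as sketched: on $\bQ^4$ one has $c_1(\sE)=4$, so the generic splitting type may well be the balanced $(2,2)$, in which case there is no distinguished sub-line-bundle on general lines at all; and even when the generic type is unbalanced, promoting the fibrewise destabilizing factor to a global saturated sub-line-bundle of $\sE$ requires a genuine descent argument through the incidence variety (it is automatic only for uniform bundles), not just ``saturation.'' So keep the citation route; the DIY route would need substantially more work to close.
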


\begin{proof}
Note that $5 \geq l_X \geq 2$ by Lemma~\ref{lem:splitting}~\ref{lem:splitting2} and that $l_X\neq2$ by Proposition~\ref{prop:RCn-2}~\ref{prop:RCn-23}.

Assume to the contrary that $l_X  \geq 4 $.
Then $X \simeq \bP^4$ or $\bQ^4$ by Theorem~\ref{thm:PQ}.
It follows from the classification of Fano bundles of rank two on $\bP^n$ or $\bQ^n$ \cite{APW94} that the bundle $\sE$ is decomposable.
This contradicts our assumption that $\sE$ is indecomposable.
Thus $l_X = 3$.
Since the index $r_X$ divides the length $l_X$, we have $r_X =1$ or $3$.
\end{proof}

In what follows, we will fix a family $M$ of rational curves on $X$ as in Proposition~\ref{prop:RCn-2}~\ref{prop:RCn-21} for our Mukai pair $(X,\sE)$ as in Setting~\ref{set}.
A rational curve in this family is called a \emph{minimal rational curve}. 
By the above proposition, we have $l_X=3$.
Thus, if $f \colon \bP^1 \to C \subset X$ is a minimal rational curve on $X$, then $f^*\sE \simeq \cO(2) \oplus \cO (1)$ by Lemma~\ref{lem:splitting}~\ref{lem:splitting2}.
Corresponding to the surjection $\cO(2) \oplus \cO (1) \to \cO(1)$, a unique rational curve $\tf \colon \bP^1 \to \tC \subset \bP(\sE)$ exists such that $\xi_{\sE}\cdot\tC =1$ and the following diagram is commutative:
 \[
 \xymatrix{
 & \bP(\sE) \ar[d]^-\pi\\
 \bP^1 \ar[ru]^-\tf \ar[r]^-f & X.
 }
 \]
\begin{definition}[Minimal lifts]\label{def:lift}
 The above constructed rational curve $\tf \colon \bP^1 \to \tC \subset \bP(\sE)$ is called the \emph{minimal lift} of the minimal rational curve $f \colon \bP^1 \to C \subset X$.
\end{definition}

This construction can be globalized as follows:
let $(X,\sE)$ be a Mukai pair as in Setting~\ref{set} and $M$ a family of minimal rational curves on $X$.
Then we have the following diagram as in \eqref{dia:UF}:
\[
\xymatrix{
 U \ar[r]^-e \ar[d]^-p & X \\
 M.
}
\]

\begin{lemma}[Lifting map]\label{lem:lift}
 There exists a unique lift $\te \colon U \to \bP(\sE)$ of the evaluation map $e$ that restricts to the minimal lift on each minimal rational curve.
\end{lemma}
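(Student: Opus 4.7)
\emph{Proof plan.} The strategy is to construct $\te$ as the morphism to $\bP(\sE)$ defined by a global rank-one quotient of $e^*\sE$ on $U$ whose fiberwise restriction is the degree-$1$ quotient corresponding to the minimal lift. Concretely, since $p \colon U \to M$ is a smooth $\bP^1$-fibration, the relative dualizing sheaf $\omega_{U/M}$ has fiberwise degree $-2$, and since each minimal rational curve $f$ satisfies $f^*\sE \simeq \cO(2) \oplus \cO(1)$, the twisted sheaf $e^*\sE \otimes \omega_{U/M}$ restricts to $\cO \oplus \cO(-1)$ on every $p$-fiber. This has $h^0 = 1$ and $h^1 = 0$, so cohomology and base change show that $\sL \coloneqq p_*(e^*\sE \otimes \omega_{U/M})$ is a line bundle on $M$ whose formation commutes with arbitrary base change.

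Next I would consider the adjunction morphism
\[
p^*\sL \otimes \omega_{U/M}^{-1} \longrightarrow e^*\sE,
\]
which on each $p$-fiber recovers the inclusion of the maximal destabilizing subbundle $\cO(2) \hookrightarrow \cO(2) \oplus \cO(1)$; in particular this map is fiberwise injective with cokernel of constant rank one. Hence the cokernel $\sM$ is itself a line bundle on $U$, restricting to $\cO(1)$ on each $p$-fiber. The resulting surjection $e^*\sE \twoheadrightarrow \sM$ corresponds, via the universal property of the Grothendieck projectivization $\bP(\sE)$, to a morphism $\te \colon U \to \bP(\sE)$ over $X$, and by construction $\te$ restricts on each $p$-fiber to the minimal lift of the corresponding minimal rational curve.

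For uniqueness: any lift of $e$ restricting to minimal lifts corresponds to a line bundle quotient of $e^*\sE$ of fiberwise degree $1$, equivalently to a saturated line subbundle of $e^*\sE$ of fiberwise degree $2$; since $\Hom(\cO(2),\cO(2)\oplus\cO(1))$ is one-dimensional on each $p$-fiber, such a subbundle is unique up to scalar, hence uniquely determined as a subbundle. The only mildly technical step is verifying that $\sM$ is genuinely locally free on $U$ rather than merely fiberwise so—but this follows at once from the fact that the map of vector bundles above has locally constant fiberwise rank, which makes the cokernel $M$-flat of fiberwise rank $1$.
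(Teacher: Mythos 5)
Your proposal is correct and follows essentially the same route as the paper: twist $e^*\sE$ by the relative dualizing sheaf so that it restricts to $\cO\oplus\cO(-1)$ on fibers, push forward to get a line bundle on $M$, and use the adjunction map to produce a fiberwise degree-one line bundle quotient of $e^*\sE$, which defines $\te$ by the universal property of $\bP(\sE)$. You simply spell out the base-change and local-freeness details, and the uniqueness argument, that the paper leaves implicit.
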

\begin{proof}
Let $\omega_p$ denote the relative canonical bundle of the universal family $p \colon U \to M$.
 Then the bundle $e^*\sE \otimes\omega_p$ is isomorphic to $\cO \oplus \cO (-1)$ on each $p$-fiber $\bP^1$.
 Thus we have the following exact sequence of \emph{vector bundles} on $U$
 \[
 0 \to p^*p_*e^*\sE \otimes \omega_p \to e^*\sE \otimes \omega_p \to Q \to 0,
 \]
 which restricts on each $p$-fiber to the sequence
  \[
 0 \to \cO \to \cO \oplus \cO(-1) \to \cO(-1) \to 0.
 \]
 This exact sequence gives the map $\te$ as desired.
\end{proof}

Finally, we include here a useful lemma.
\begin{lemma}\label{lem:BB}
Let $(X,\sE)$ be a pair as in Setting~\ref{set}.
Then there is no curve $D$ on $X$ such that $\dim\varphi(\pi^{-1}(D))=1$.
\end{lemma}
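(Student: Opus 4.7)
The plan is to derive a contradiction via a numerical intersection computation on $W$, combining nefness of the $\varphi$-supporting divisor with the existence of minimal lifts from Lemma~\ref{lem:lift}.

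Assume for contradiction that such a curve $D\subset X$ exists. Since $\varphi(\pi^{-1}(D))$ depends only on the underlying reduced support of $D$, after replacing $D$ by an irreducible component and then by its reduction, I may assume that $D$ is reduced and irreducible; set $S\coloneqq\pi^{-1}(D)$, regarded as a $2$-cycle on $W$ of class $\pi^*[D]$.

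Because $\rho_W=2$ and $\xi$ is ample on $W=\bP(\sE)$, the nef cone $\Nef(W)$ is bounded by $\bR_{\geq 0}\pi^*H_X$ on the $\pi$-side and by a ray $\bR_{\geq 0}L$ containing $\varphi^*H_Y$ on the $\varphi$-side. Normalizing $L$ so that $L\cdot f=1$ for a $\pi$-fiber $f$, I can write $L=\xi-\beta\pi^*H_X$ with $\beta>0$, since any curve in $R_\varphi$ has strictly positive intersection with both $\xi$ and $\pi^*H_X$. The Grothendieck relation on the $\bP^1$-bundle $\pi\colon W\to X$ yields $\pi_*(\xi^2)=c_1(\sE)=-K_X=r_XH_X$, and the projection formula then gives
\[
L^2\cdot S \;=\; \pi_*(L^2)\cdot D \;=\; (r_X-2\beta)(H_X\cdot D).
\]
On the other hand, $L^2$ is a positive multiple of $\varphi^*(H_Y^2)$, and since $\dim\varphi(S)\leq 1$ the pushforward $\varphi_*[S]$ vanishes; hence $L^2\cdot S=0$. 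Combined with $H_X\cdot D>0$, this forces $\beta=r_X/2$.

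The contradiction comes from the minimal lift: by Proposition~\ref{lem:lX} there is a minimal rational curve $f_0\colon\bP^1\to C_0\subset X$ with $-K_X\cdot C_0=l_X=3$, and its minimal lift $\tC$ (Definition~\ref{def:lift}) satisfies $\xi\cdot\tC=1$ and $\pi^*H_X\cdot\tC=H_X\cdot C_0=3/r_X$. Nefness of $L$ then forces $L\cdot\tC=1-3\beta/r_X\geq 0$, whence $\beta\leq r_X/3$, contradicting $\beta=r_X/2$. The only subtlety is the identification $L=\xi-\beta\pi^*H_X$ with $\beta>0$ of the $\varphi$-nef ray, which uses ampleness of $\xi$; apart from that the argument is a direct intersection-theoretic calculation.
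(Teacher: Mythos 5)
Your proof is correct. The identity $\pi_*(\xi^2)=c_1(\sE)=r_XH_X$ and the projection formula do give $L^2\cdot\pi^*[D]=(r_X-2\beta)(H_X\cdot D)$; the vanishing $L^2\cdot\pi^*[D]=0$ follows since $L$ lies on the ray of $\varphi^*H_Y$ and $\varphi_*[\pi^{-1}(D)]=0$; and nefness of $L$ on the minimal lift $\tC$ (with $\xi\cdot\tC=1$, $\pi^*H_X\cdot\tC=3/r_X$) forces $\beta\le r_X/3$, incompatible with $\beta=r_X/2$. The paper's own proof is shorter and takes a slightly different route: since $\rho_X=1$, the class of $D$ is numerically proportional to that of a minimal rational curve $C$, so $\dim\varphi(\pi^{-1}(C))=1$ as well, and this is then contradicted directly by the structure of $\pi^{-1}(C)$, the image of $\bP(\cO(2)\oplus\cO(1))\simeq\bF_1$ (a surface whose only ruling is the $\pi$-ruling cannot be contracted by $\varphi$ onto a curve, as $R_\pi\neq R_\varphi$). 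Your version avoids the proportionality transfer and instead works globally with the second nef boundary class $L=\xi-\beta\pi^*H_X$; in doing so you effectively re-derive, inside the proof, the inequality $ar_X\ge3$ of Lemma~\ref{lem:arX} (your $\beta$ is the paper's $1/a$), which the paper only establishes later in Section~\ref{sect:rough}. Both arguments ultimately rest on the same two inputs — the splitting type $\cO(2)\oplus\cO(1)$ on minimal rational curves and the location of the $\varphi$-side of the nef cone — so the difference is one of packaging rather than substance; your computation is self-contained and needs no geometry of $\bF_1$, at the cost of being longer.
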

\begin{proof}
 Assume to the contrary that there is a curve $D$ on $X$ such that 
 \[\dim\varphi(\pi^{-1}(D))=1.\]
Since $\rho_X=1$, each minimal rational curve $f \colon \bP^1 \to C \subset X$ on $X$ is numerically proportional to $D$, and hence $\dim \varphi(\pi^{-1}(C)) =1$.
This contradicts to the fact that $f^*\sE \simeq \cO(2) \oplus \cO(1)$.
\end{proof}

\section{Stability and Bogomolov's inequality}\label{sect:stable}
In this section, we will check that the bundle $\sE$ is stable.
\begin{proposition}[Stability of $\sE$]
 Let $(X,\sE)$ be a Mukai pair as in Setting~\ref{set}.
 Then the bundle $\sE$ is stable.
\end{proposition}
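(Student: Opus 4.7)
The plan is to assume $\sE$ is not stable and derive a contradiction. Since $\rho_X = 1$, the saturation of any destabilizing rank-one subsheaf of $\sE$ is necessarily a line bundle of the form $\cO(aH_X)$ with $2a \geq r_X$, and by Proposition~\ref{lem:lX} we have $r_X \in \{1,3\}$. First I would restrict to a minimal rational curve $f\colon \bP^1 \to X$, where $f^*\sE \simeq \cO(2) \oplus \cO(1)$ by Lemma~\ref{lem:splitting}: an embedding $\cO(a(H_X\cdot C)) \hookrightarrow \cO(2)\oplus\cO(1)$ forces $a(H_X\cdot C) \leq 2$, and combined with $r_X(H_X\cdot C) = -K_X\cdot C = 3$ this gives $3a \leq 2r_X$. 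For $r_X = 1$ this directly contradicts $a\geq 1$; for $r_X = 3$ it pins $a = 2$. Thus only the case $r_X = 3$ with $\cO(2H_X) \hookrightarrow \sE$ remains.

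In this case I would interpret the inclusion as a nonzero section $s \in H^0(X, \sE(-2H_X))$ and set $Z \coloneqq V(s) \subset X$. The crucial point is a rigid dichotomy on restrictions to covering curves: for any minimal rational curve $C$, the restricted bundle $\sE(-2H_X)|_C \simeq \cO \oplus \cO(-1)$ has one-dimensional $H^0$, and any nonzero section of it is constant in the $\cO$-summand and therefore nowhere vanishing on $C$. Consequently $s|_C$ is either identically zero (so $C \subset Z$) or everywhere nonvanishing (so $C \cap Z = \emptyset$).

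Combining this dichotomy with the chain connectedness of $X$ by minimal rational curves (Proposition~\ref{prop:RCn-2}\,(2)) then forces $Z = \emptyset$: if $Z$ were nonempty, every minimal rational curve through a point of $Z$ would lie entirely in $Z$, and inductively every curve in any chain starting in $Z$ would too, so chain connectedness would yield $Z = X$, contradicting $s \neq 0$. Thus $\cO(2H_X)$ is actually a subbundle of $\sE$, sitting in a short exact sequence $0 \to \cO(2H_X) \to \sE \to \cO(H_X) \to 0$. Its extension class lives in $\Ext^1(\cO(H_X), \cO(2H_X)) = H^1(X, \cO(H_X))$, which vanishes by Kodaira vanishing applied to $\cO(H_X) = \cO(K_X + 4H_X)$; the sequence therefore splits and $\sE \simeq \cO(2H_X) \oplus \cO(H_X)$ contradicts indecomposability.

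The heart of the argument is the dichotomy step; once the splitting type $\sE(-2H_X)|_C \simeq \cO \oplus \cO(-1)$ and the one-dimensionality of its $H^0$ are in hand, combining this with chain connectedness and Kodaira vanishing is purely formal, and I do not expect a substantial obstacle.
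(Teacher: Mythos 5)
Your proposal is correct and follows essentially the same route as the paper: reduce non-stability to a destabilizing sub-line-bundle, restrict to a general minimal rational curve where $f^*\sE \simeq \cO(2)\oplus\cO(1)$ to pin down $r_X=3$ and the twist $m=2$, use the splitting type $\cO\oplus\cO(-1)$ plus chain connectedness to conclude the section is nowhere vanishing, and then split the resulting extension by Kodaira vanishing to contradict indecomposability. The only (shared) point needing care is that the destabilizing map must restrict nontrivially to the chosen curve, which holds for a \emph{general} member of the dominating family, exactly as in the paper.
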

As a consequence, we have the Bogomolov inequality for $\sE$, which is a crucial ingredient in our proof of Theorem~\ref{thm:main}.
Roughly speaking the inequality enables us to bound some invariants for the pair $(X,\sE)$.
Accordingly, the boundedness of invariants implies some finiteness results on possibilities of the structure of the pair.

First, we will recall the definition of stability of vector bundles, or torsion free sheaves.
Let $X$ be a smooth projective variety of dimension $n$ with an ample divisor $H_X$.
Given a torsion free sheaf $\sF$ on $X$, the \emph{slope} $\mu (\sF)$ with respect to $H_X$ is defined as 
\[
\mu (\sF) \coloneqq \frac{c_1(\sF)\cdot H_X^{n-1}}{\rank \sF}.
\]

A subsheaf $\sF_1 \subset \sF$ is said to be \emph{saturated} if the quotient $\sF/\sF_1$ is torsion free.

\begin{definition}[Stability of vector bundles]
A torsion free sheaf $\sF$ is called \emph{$H_X$-stable} if 
\begin{equation}\label{eq:slope}
 \mu(\sF_1)<\mu(\sF)
\end{equation}

holds for every nonzero saturated subsheaf $\sF_1 \subsetneq\sF$.
\end{definition}

In the following, we will restrict our attention to the case of a locally free sheave $\sF$ of rank two.
In this case, the stability of $\sF$ is easy to prove:
it suffices to check \eqref{eq:slope} for any saturated subsheaf $\sF_1$ of rank one.
Since $\sF$ is locally free, such a subsheaf $\sF_1$ is reflexive, and hence it is a line bundle \cite[Propositions~1.1 and 1.9]{Har80}.
Thus the stability of $\sF$ is equivalent to the vanishing
\[
H^0(\sF \otimes \sL^{-1}) =0
\]
for any line bundle $\sL$ with
\[
c_1(\sL)\cdot H_X^{n-1} \geq \frac{c_1(\sF)\cdot H_X^{n-1}}{2}.
\]

Moreover, if $\Pic(X)$ is generated by $\cO(H_X)$, then the above condition is equivalent to 
\[
H^0(\sF(-m))=0
\]
for any $m \geq \frac{c_1(\sF)}{2}$.

Let $(X,\sE)$ be a Mukai pair as in Setting~\ref{set}.
 Then, keeping with the above observation, the stability of $\sE$ follows from the next lemma. 
\begin{lemma}
  Let $(X,\sE)$ be a Mukai pair as in Setting~\ref{set}.
Then
\[
H^0(\sE(-m))=0
\]
for any $m \geq \frac{r_X}{2}$.

\end{lemma}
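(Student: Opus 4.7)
The plan is to restrict $s \in H^0(X, \sE(-m))$ to a minimal rational curve and to combine the covering and chain-connectedness properties of Proposition~\ref{prop:RCn-2} with Kodaira vanishing. By Proposition~\ref{lem:lX} we have $l_X = 3$ and $r_X \in \{1,3\}$, so any minimal rational curve $f \colon \bP^1 \to C$ satisfies $f^*\sE \simeq \cO(2) \oplus \cO(1)$ and $H_X \cdot C = 3/r_X$. Consequently
\[
 f^*(\sE(-m)) \simeq \cO\!\bigl(2 - \tfrac{3m}{r_X}\bigr) \oplus \cO\!\bigl(1 - \tfrac{3m}{r_X}\bigr).
\]

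In the cases with $r_X = 1$ and $m \geq 1$, or $r_X = 3$ and $m \geq 3$, both summands have strictly negative degree, so $s|_C$ vanishes for every minimal rational curve $C$. Since minimal rational curves cover $X$ by Proposition~\ref{prop:RCn-2}~\ref{prop:RCn-21}, this forces $s=0$.

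The only remaining case is $(r_X, m) = (3, 2)$, where $f^*\sE(-2H_X) \simeq \cO \oplus \cO(-1)$ has $1$-dimensional global sections. Assuming $s \neq 0$, one obtains an injection $\cO(2H_X) \hookrightarrow \sE$ with cokernel $\mathcal{I}_Z(H_X)$, where $Z = Z(s)$ has codimension at least $2$. On any minimal rational curve $C$, the restriction $s|_C \in H^0(\cO \oplus \cO(-1)) \simeq \bC$ is either zero (in which case $C \subset Z$) or a nowhere-zero constant in the $\cO$-summand (in which case $C \cap Z = \emptyset$). If $Z$ were nonempty, picking $z \in Z$ and $x \notin Z$ (possible since $\dim Z \leq 2 < 4$), Proposition~\ref{prop:RCn-2}~\ref{prop:RCn-22} provides a chain of minimal rational curves joining $z$ to $x$, and an induction along the chain using the above dichotomy forces $x \in Z$, a contradiction. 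Hence $Z = \emptyset$ and we have a short exact sequence of vector bundles
\[
 0 \to \cO(2H_X) \to \sE \to \cO(H_X) \to 0.
\]

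The extension class lies in $\Ext^1(\cO(H_X), \cO(2H_X)) \simeq H^1(X, \cO(H_X))$. Since $r_X = 3$ gives $\omega_X \simeq \cO(-3H_X)$, we have $\cO(H_X) \otimes \omega_X^{-1} \simeq \cO(4H_X)$, which is ample, so Kodaira vanishing yields $H^1(X,\cO(H_X)) = 0$. The sequence then splits and $\sE \simeq \cO(2H_X) \oplus \cO(H_X)$, contradicting the indecomposability hypothesis on $\sE$. The main obstacle throughout is the case $(r_X, m) = (3, 2)$, where one must first geometrically rule out a nontrivial zero locus via chain-connectedness before the cohomological vanishing can conclude the argument.
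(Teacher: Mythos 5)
Your proof is correct and follows essentially the same route as the paper's: restrict to minimal rational curves to reduce to the case $(r_X,m)=(3,2)$, use chain-connectedness to show a nonzero section of $\sE(-2)$ would be nowhere vanishing, and then split the resulting extension via Kodaira vanishing ($H^1(X,\cO(H_X))=0$) to contradict indecomposability. The only cosmetic differences are that you treat the trivially negative cases by covering rather than by restricting to a general curve, and you spell out the chain-connectedness dichotomy in more detail.
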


\begin{proof}
 Take a section $s \in H^0(\sE(-m))$ for $m \geq r_X /2$ and assume $s \neq 0$ to the contrary.
 Then by restricting to a general minimal rational curve $f \colon \bP^1 \to X$, we also have a nonzero section $f^*s$ of the bundle $f^*(\sE(-m))$, which is isomorphic to 
\[
\cO\left(2-\frac{3m}{r_X}\right)\oplus \cO\left(1-\frac{3m}{r_X}\right).
\]

Since the section $f^*s$ is nonzero, we have $2-\frac{3m}{r_X} \geq 0$, or equivalently, $\frac{2r_X}{3} \geq m$.
Note that $r_X=1$ or $3$ (Lemma~\ref{lem:lX}), and also that $m \geq \frac{r_X}{2}$ by our assumption.
Thus this is possible only if $r_X=3$ and $m=2$.
In this case, the bundle $f^*(\sE(-m))$ is isomorphic to $\cO\oplus \cO(-1)$.
Therefore, the section $f^*s$ is nowhere vanishing.
As $X$ is chain connected by  minimal rational curves (Proposition~\ref{prop:RCn-2}~\ref{prop:RCn-22}), the original section $s$ is also nowhere vanishing.
Hence we have an exact sequence 
\[
0 \to \cO \to \sE(-2) \to \sE(-2)/\cO \to 0
\]
of \emph{vector bundles} on $X$,
and the quotient $\sE(-2)/\cO$ is isomorphic to $\cO(-1)$.
This exact sequence, however, splits, since $\Ext ^1 (\cO(-1),\cO) =0$ by the Kodaira vanishing theorem.
This contradicts our unsplit assumption on $\sE$.
\end{proof}

As a consequence, the bundle $\sE$ is stable.
Set $\varDelta \coloneqq c_1(\sE)^2-4c_2(\sE)$.
Then the Bogomolov inequality yields the following.
\begin{proposition}[Bogomolov's inequality]\label{prop:Bogomolov}
 Let $(X,\sE)$ be a Mukai pair as in Setting~\ref{set}.
 Then
 \[
 \varDelta\cdot H_X^2 \leq0.
 \]
\end{proposition}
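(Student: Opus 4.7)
The inequality is a direct instance of Bogomolov's theorem on stable (or semistable) sheaves, so the strategy is simply to reduce the statement on the $4$-fold $X$ to the classical surface case where the inequality is classical.

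The plan is as follows. First, fix $m \gg 0$ sufficiently divisible and pick two general members $D_1, D_2 \in |m H_X|$. By Bertini, the complete intersection $S \coloneqq D_1 \cap D_2$ is a smooth projective surface, and its class as a cycle on $X$ is $m^2 H_X^2$. I would then invoke the Mehta--Ramanathan restriction theorem: since $\sE$ is $H_X$-stable and $m$ is large enough, the restriction $\sE|_S$ is $H_X|_S$-stable as a rank-two vector bundle on the surface $S$.

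Next, I apply the classical Bogomolov inequality for stable rank-two vector bundles on a smooth projective surface to conclude
\[
\bigl( c_1(\sE|_S)^2 - 4 c_2(\sE|_S) \bigr) \leq 0.
\]
By functoriality of Chern classes, this expression equals $\varDelta \cdot [S] = m^2\,\varDelta \cdot H_X^2$ on $X$. Dividing by $m^2 > 0$ yields $\varDelta \cdot H_X^2 \leq 0$, as required.

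Since everything here is a standard consequence of results already available in the literature, I do not anticipate a genuine obstacle; the only point demanding mild care is choosing $m$ large enough so that Mehta--Ramanathan applies and $S$ is smooth. Alternatively, one can skip the reduction to a surface entirely and cite directly the higher-dimensional form of Bogomolov's inequality, which for an $H_X$-semistable sheaf of rank $r$ on an $n$-dimensional smooth projective variety states $\bigl(2 r c_2(\sE) - (r-1) c_1(\sE)^2\bigr) \cdot H_X^{n-2} \geq 0$; in our setting ($r=2$, $n=4$) this specializes immediately to the desired inequality.
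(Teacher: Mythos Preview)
Your proposal is correct and matches the paper's approach: the paper simply records the proposition as an immediate consequence of the stability of $\sE$ (established in the preceding lemma) together with the standard Bogomolov inequality, without spelling out the reduction. Your write-up just makes explicit the two standard ways of justifying the higher-dimensional form (Mehta--Ramanathan restriction to a surface, or citing the general inequality directly), either of which is exactly what the paper is implicitly invoking.
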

Note that, by the Grothendieck relation on the projective bundle $\bP(\sE)$,
we have
\begin{equation}\label{eq:Grel}
 \pi^*\varDelta = (2\xi_\sE-r_X\pi^*H_X)^2.
\end{equation}

\section{Rough description of the second contraction}\label{sect:rough}

Let $(X,\sE)$ be a pair as in Setting~\ref{set}, $M$ a family of minimal rational curves and $f \colon \bP^1 \to C \subset X$ a minimal rational curve.
Then the numerical equivalence class of the minimal lift $\tC$ does not depend on the choice of $C$;
the class $[\tC]$ is characterized by the numerical conditions $\xi_\sE \cdot [\tC] =1$ and $\pi^*(-K_X)\cdot [\tC] =3$.
\begin{definition}[Half line $\tR$]
 We will denote by $\tR$ the half line $\bR_{\geq0}[\tC]$ spanned by the class $[\tC]$.
\end{definition}

Since the Picard number of $\bP(\sE)$ is two, the nef cone $\NE(\bP(\sE))$ is spanned by two extremal rays; the ray $R_{\pi}$  corresponding to the projection $\pi$, and  the other ray $R_{\varphi}$ that defines the other extremal contraction $\varphi \colon \bP(\sE) \to Y$.
Note that $\tR \subset \NE(\bP(\sE))$:
\begin{figure}[h]
 \begin{picture}(100,100)(-20,0)
  \put(0,0){}
  \put(0,0){\line(1,0){70}}
  \put(0,0){\line(0,1){70}}
  \put(0,0){\line(1,1){60}}
  \put(70,-10){$R_\varphi$} 
  \put(-15,70){$R_{\pi}$}
  \put(60,70){$\tR \coloneqq \bR_{\geq0}[\tC]$}
\end{picture}
\caption{$\NE (\bP(\sE))$}
\end{figure}

\begin{remark}[$\tR$ and $R_\varphi$]\label{rem:comp}
\hfill 
\begin{enumerate}
\item A priori, it is not clear that the half line $\tR$ actually coincides with the extremal ray $R_\varphi$, or equivalently, that the minimal lifts $\tC$ are contracted by the second extremal contraction $\varphi$, cf. \cite[Lemma~3.1]{PSW92b}, \cite[Theorem~3.2]{Kan17}.
In our proof of Theorem~\ref{thm:main}, an important step consists of the proof of this assertion $\tR = R_{\varphi}$ by using the Bogomolov inequality (see Propositions~\ref{prop:Ineq} and \ref{prop:comp}).

\item In Section~8 of their paper \cite{NO07}, Novelli and Occhetta discussed the classification problem of Mukai pairs $(X,\sE)$ as in Setting~\ref{set} with an extra assumption $r_X \geq 2$.
Their argument proceeds as follows:
take a general element $Y$ of its fundamental linear system $|H_X|$.
By virtue of their assumption $r_X \geq 2$, $Y$ is a smooth Fano variety.
Then, they claimed that the restricted projective bundle $\bP(\sE|_Y)$ is also a Fano variety.
And, by using the classification results on $(Y,\sE|_Y)$, they recovered the structure of original $(X,\sE)$.
However, at least for the author, it is not clear that $\bP(\sE|_Y)$ is a Fano variety, as they claimed;
in fact, it is equivalent to the non-trivial assertion $\tR=R_\varphi$ (this equivalence follows from Proposition~\ref{prop:jumpingfib} below).

\end{enumerate}
\end{remark}

The dual cone of $\NE(\bP(\sE))$ is the nef cone $\Nef(\bP(\sE))$, which is spanned by two rays $\bR_{\geq0}[\pi^*H_X]$ and $\bR_{\geq0}[\varphi^* H_Y]$, where $H_Y$ is a divisor corresponding to the ample  generator of $\Pic(Y)$.
Since $\xi_\sE$ is ample on $\bP(\sE)$, we can find a positive rational number $a \in \bQ_{>0}$ such that $[a\xi_\sE - \pi^*H_X] \in \bR_{\geq0}[\varphi^* H_Y]$.
\begin{figure}[h]
 \begin{picture}(100,70)(-50,0)
  \put(0,0){}
  \put(0,0){\line(1,0){70}}
  \put(0,0){\line(-1,1){55}}
  \put(70,-10){$[\pi^*H_X]$} 
  \put(5,50){$\xi_\sE$}
  \put(-100,60){$[a\xi_\sE - \pi^*H_X]$}
  \multiput(0,0)(0,2){30}{\line(0,1){1}}
\end{picture}
\caption{$\Nef (\bP(\sE))$}
\end{figure}

\begin{lemma}[$ar_X \geq3$]\label{lem:arX}
Let the notation be as above.
Then $ar_X \geq3$ with equality holding if and only if $\tR = R_\varphi$
\end{lemma}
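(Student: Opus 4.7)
The plan is to simply pair the divisor $a\xi_\sE-\pi^*H_X$ against the class $[\tC]$ and exploit that this divisor is nef.

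First I would record the numerical intersections of $[\tC]$ with the two generators of the nef cone. By the very definition of the minimal lift, $\xi_\sE\cdot\tC = 1$. On the other hand, since $\tC$ maps isomorphically onto a minimal rational curve $C\subset X$ and minimal rational curves have anticanonical degree $3$, we have $\pi^*H_X\cdot \tC = H_X\cdot C = 3/r_X$ (using $-K_X=r_X H_X$).

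Next, observe that $a\xi_\sE-\pi^*H_X$ lies on the ray $\bR_{\geq 0}[\varphi^*H_Y]$, so it is a nonnegative multiple of a nef divisor, hence nef itself. Pairing with the effective class $[\tC]\in\cNE(\bP(\sE))$ gives
\[
0 \leq (a\xi_\sE-\pi^*H_X)\cdot \tC = a - \frac{3}{r_X},
\]
so $ar_X\geq 3$, which is the desired inequality.

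For the equality statement, note that equality $ar_X=3$ is equivalent to $(a\xi_\sE-\pi^*H_X)\cdot\tC=0$, i.e.\ $\varphi^*H_Y\cdot\tC=0$. Since $R_\varphi$ is precisely the face of $\cNE(\bP(\sE))$ annihilated by $\varphi^*H_Y$, this vanishing is equivalent to $[\tC]\in R_\varphi$, and hence to $\tR=R_\varphi$ (both are half lines and $\tR$ is spanned by $[\tC]$). There is nothing subtle here: the whole argument is a one-line nefness computation, and no genuine obstacle arises — the content of the lemma is just to set up the inequality that will be combined with the Bogomolov bound later to actually force $\tR=R_\varphi$.
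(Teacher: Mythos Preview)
Your proof is correct and follows exactly the same approach as the paper: pair the nef supporting divisor $a\xi_\sE-\pi^*H_X$ against $[\tC]$ and read off the inequality and equality condition. The paper's proof is a single sentence (``the assertion is simply a rephrasing of this fact''), and you have spelled out the intersection numbers $\xi_\sE\cdot\tC=1$ and $\pi^*H_X\cdot\tC=3/r_X$ that the paper records just before the lemma; the only cosmetic imprecision is the word ``isomorphically'' (the map $\tC\to C$ is birational, not necessarily an isomorphism), but the pushforward computation $\pi_*[\tC]=[C]$ is unaffected.
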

\begin{proof}
 Since $a\xi_\sE - \pi^*H_X$ is nef, we have
 \[
( a\xi_\sE - \pi^*H_X) \cdot \tC \geq 0,
 \]
 and the equality holds if and only if the curve $\tC$ is contracted by $\varphi$.
 The assertion is simply a rephrasing of this fact.
\end{proof}

First, by using Theorem~\ref{thm:IWineq} and \cite{Wat14b}, we show that there exists at least one $\varphi$-fiber whose dimension is not small.

\begin{proposition}\label{prop:jumpingfib}
 Let $(X,\sE)$ be a pair as in Setting~\ref{set}.
 Then there exists a $\varphi$-fiber $F$ with $\dim F \geq 2$.
\end{proposition}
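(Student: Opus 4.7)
The plan is to argue by contradiction: assume that every $\varphi$-fiber has dimension at most $1$. Since $-K_W = 2\xi_\sE$ and $\xi_\sE$ is $\varphi$-ample, every rational curve contracted by $\varphi$ has anticanonical degree a positive integer multiple of $2$, so $l(R_\varphi) \geq 2$. If $\varphi$ were birational, Theorem~\ref{thm:IWineq} applied to any fiber $F$ inside the exceptional divisor $E$ would give
\[
\dim F + \dim E \geq \dim W + l(R_\varphi) - 1 \geq 6,
\]
and with $\dim E \leq 4$ this would force $\dim F \geq 2$, contradicting the assumption. Hence $\varphi$ must be of fiber type.

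Since $\varphi$ is of fiber type with every fiber of dimension at most $1$, while a general fiber has dimension $\dim W - \dim Y \geq 1$, the morphism is equidimensional with $\dim Y = 4$ and every fiber of dimension exactly $1$. The relation $K_W + 2\xi_\sE = 0$ shows that $\xi_\sE$ is a $\varphi$-ample line bundle with $K_W + (\dim W - \dim Y + 1)\xi_\sE$ supporting the contraction, so $\varphi$ is an equidimensional adjunction-theoretic scroll. By Fujita's characterization of equidimensional scrolls, $Y$ is smooth and $W \simeq \bP(\sF)$ for a rank-$2$ vector bundle $\sF$ on $Y$, with $\varphi$ the natural projection. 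In particular every $\varphi$-fiber is a $\bP^1$.

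Consequently $W$ is a smooth Fano $5$-fold of Picard number $2$ carrying two distinct $\bP^1$-bundle projections, $\pi \colon W \to X$ and $\varphi \colon W \to Y$, onto smooth projective $4$-folds of Picard number $1$. This puts $(X, \sE)$ into the exact configuration governed by Watanabe's classification \cite{Wat14b}, whose conclusion is incompatible with the hypothesis that $\sE$ is indecomposable on an $X$ with $\rho_X = 1$ (the listed cases force $\sE$ to split, or force $X$ to be a projective space on which every rank-$2$ Fano bundle with the required numerical invariants is decomposable by \cite{APW94}). This yields the desired contradiction.

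The main obstacle is the last step: one must check carefully that \cite{Wat14b} really excludes every possibility left open by our setup, rather than merely restricting it. The first two steps are routine applications of the extremal-contraction machinery recalled in Section~\ref{sect:pre}; the substantive content is concentrated in invoking the two-$\bP^1$-bundle-structure classification at the correct level of generality.
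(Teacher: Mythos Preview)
Your proposal is correct and follows essentially the same approach as the paper: both use the Ionescu--Wi\'sniewski inequality to rule out the birational case, Fujita's characterization of equidimensional scrolls \cite[Lemma~2.12]{Fuj87} to conclude that $\varphi$ would be a smooth $\bP^1$-bundle, and then Watanabe's result \cite{Wat14b} to reach a contradiction. The only difference is organizational---the paper invokes \cite[Theorem~1.1]{Wat14b} at the outset as the direct statement that $\varphi$ cannot be a smooth $\bP^1$-bundle, whereas you derive the two-$\bP^1$-bundle configuration first and then appeal to Watanabe's classification; your caution about checking the cases in \cite{Wat14b} is unnecessary, since that theorem already excludes this configuration outright under the hypotheses of Setting~\ref{set}.
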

\begin{proof}
By \cite[Theorem~1.1]{Wat14b}, the other contraction $\varphi$ is not a smooth $\bP^1$-bundle.

 Assume to the contrary that every $\varphi$-fiber has dimension one or zero.
Then, by Theorem~\ref{thm:IWineq}, the morphism $\varphi$ is of fiber type and every $\varphi$-fiber has dimension one.
Moreover, by adjunction, each general $\varphi$-fiber $F$ is isomorphic to $\bP^1$ and $\cO(\xi_\sE)|_F  \simeq \cO_{\bP^1}(1)$.
This implies that $\varphi$ is a smooth $\bP^1$-bundle \cite[Lemma~2.12]{Fuj87}, and we get a contradiction.
\end{proof}

On the other hand, as a first application of the Bogomolov inequality (Proposition~\ref{prop:Bogomolov}), we can now prove the following.

\begin{proposition}[Bounding the dimension of $\varphi$-fibers]\label{prop:bound}
Let $(X,\sE)$ be a pair as in Setting~\ref{set} and the notation as above.
Then $\dim F \leq 2$ for every $\varphi$-fiber $F$.
\end{proposition}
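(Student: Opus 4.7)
The plan is to argue by contradiction, playing the Bogomolov inequality (Proposition~\ref{prop:Bogomolov}) against the lower bound $a\,r_X \geq 3$ from Lemma~\ref{lem:arX}. Suppose some irreducible component $F$ of a $\varphi$-fiber has $\dim F \geq 3$.

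My first step is to show that $\pi|_F$ is finite. If a $\pi$-fiber $\ell$ were contained in $F$, then $[\ell]$ would lie simultaneously in $R_\pi$ and in $R_\varphi$ (the latter because $F$ is $\varphi$-contracted), forcing these two rays of $\cNE(\bP(\sE))$ to coincide; this is impossible since $\rho(\bP(\sE))=2$. Hence $\pi|_F\colon F \to Z := \pi(F)$ is finite surjective of some degree $d \in \bZ_{>0}$, with $\dim Z = \dim F \in \{3,4\}$, and using $\rho_X = 1$ the pushforward $\pi_*[F]$ is a strictly positive rational multiple of $H_X^{\,4-\dim F}$.

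Next, $a\xi_\sE - \pi^*H_X$ is a positive multiple of $\varphi^*H_Y$ (see the discussion preceding Lemma~\ref{lem:arX}) and therefore numerically trivial on the $\varphi$-fiber $F$, which yields $\xi_\sE|_F \equiv (1/a)\,\pi^*H_X|_F$ in $N^1(F)_{\bR}$. Setting $j := \dim F - 2 \in \{1,2\}$, I would compute the intersection number
\[
I := (2\xi_\sE - r_X\pi^*H_X)^{2}\cdot (\pi^*H_X)^{j}\cdot [F]
\]
in two different ways. Restricting to $F$ and using the proportionality, $I = (2/a - r_X)^{2}\,(\pi^*H_X)^{\dim F}\cdot [F]$, which by the projection formula is a strictly positive rational multiple of $(2/a - r_X)^{2}\,H_X^{4}$. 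On the other hand, the Grothendieck relation \eqref{eq:Grel} identifies $(2\xi_\sE - r_X\pi^*H_X)^{2}$ with $\pi^{*}\varDelta$, so pushing down shows that $I$ equals a strictly positive rational multiple of $\varDelta\cdot H_X^{2}$, which is non-positive by Proposition~\ref{prop:Bogomolov}. Comparing the two evaluations forces $(2/a - r_X)^{2}\leq 0$, hence $a = 2/r_X$, contradicting $a\,r_X \geq 3$.

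The main obstacle is the finiteness step: knowing that $\pi|_F$ is finite is exactly what makes $\pi_*[F]$ an effective, strictly positive multiple of a power of $H_X$, and thereby keeps the left-hand side of the two-way computation nonzero and of controlled sign, so that the Bogomolov inequality actually constrains the value of $a$. Once the extremal-ray separation $R_\pi \neq R_\varphi$ is exploited to rule out $\pi$-fibers inside $F$, the rest is a routine intersection-number calculation.
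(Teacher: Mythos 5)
Your proof is correct and follows essentially the same route as the paper's: restrict $\pi^*\varDelta=(2\xi_\sE-r_X\pi^*H_X)^2$ to the fiber, use that $a\xi_\sE-\pi^*H_X$ is numerically trivial there, push down by the projection formula, and contradict $ar_X\geq 3$ via Bogomolov. The only (harmless) difference is that the paper replaces $F$ by a $3$-dimensional subvariety $F'$ and gets positivity from $\xi_\sE^3\cdot F'>0$ by ampleness of $\xi_\sE$, whereas you get it from finiteness of $\pi|_F$ and $H_X^4>0$; both are valid.
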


\begin{proof}
 Assume to the contrary that there is a $\varphi$-fiber $F$ with $\dim F \geq 3$.
 Take $F'$ a closed subvariety of $F$ with dimension three.
 Then $\pi (F')$ is a divisor in $X$ and, since $\rho_X =1$, the divisor $\pi(F')$ is numerically proportional to $H_X$.
 Thus the Bogolmolov inequality and the projection formula yields
 \[
 \pi^*\varDelta \cdot \pi^*H_X \cdot F' \leq 0.
 \]
 Then, by the Grothendieck relation \eqref{eq:Grel} for $\sE$, the above inequality is equivalent to
 \[
 \pi^*(2\xi_\sE -r_X\pi^*H_X)^2 \cdot \pi^*H_X \cdot F' \leq 0.
 \]
 Since $F'$ is contracted to a point by the second contraction $\varphi$, we have $a\xi_\sE \nequiv \pi^*H_X$ on $F'$.
 Thus the inequality gives
 \[
 a(ar_X-2)^2 \xi_\sE ^3 \cdot F' \leq 0.
 \]
 This contradicts the facts that $\xi_\sE$ is ample and $ar_X \geq 3$.
\end{proof}

As a corollary, we have the following.
\begin{corollary}[Length of extremal rays]\label{cor:length}
 Let $(X,\sE)$ be a pair as in Setting~\ref{set}.
 Then the following hold:
 
\begin{enumerate}
 \item\label{cor:length1} $l(R_\varphi) =2$.
 \item\label{cor:length2} There is a rational curve $C_\varphi$ on $\bP(\sE)$ such that $C_\varphi$ is contracted by $\varphi$ and $\xi_\sE \cdot C_\varphi =1$.
 \item\label{cor:length3} $a$ is an \emph{integer}.
 \item\label{cor:length4} There is an ample divisor $L$ on $\bP(\sE)$ such that $K_{\bP(\sE)} + 2 L$ is a supporting divisor for $\varphi$.
\end{enumerate}
\end{corollary}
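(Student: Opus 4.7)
The plan is as follows. The key observation is that since $(X,\sE)$ is a Mukai pair of rank $2$, the canonical bundle formula for the $\bP^1$-bundle $\pi \colon W \to X$ gives $-K_W = 2\xi_\sE$, so the anticanonical degree of every curve on $W$ equals twice its $\xi_\sE$-degree; in particular $l(R_\varphi) \in 2\bZ_{>0}$. Combined with an Ionescu-Wi\'sniewski-type upper bound coming from the $2$-dimensional fiber guaranteed by Proposition~\ref{prop:jumpingfib}, this parity observation will pin down $l(R_\varphi) = 2$, after which the remaining claims are essentially intersection-theoretic.

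For (1) and (2): since $\xi_\sE$ is an ample Cartier divisor, $\xi_\sE \cdot C \geq 1$ for every curve $C$, giving $l(R_\varphi) \geq 2$. For the upper bound, take a $\varphi$-fiber $F$ of dimension $\geq 2$ (Proposition~\ref{prop:jumpingfib}); by Proposition~\ref{prop:bound} this forces $\dim F = 2$. Applying the Ionescu-Wi\'sniewski inequality (Theorem~\ref{thm:IWineq}) with the irreducible component $E$ of $\Exc(\varphi)$ containing $F$ (necessarily of dimension at most $5$) yields $l(R_\varphi) \leq 3$, and the parity constraint then forces $l(R_\varphi) = 2$. This is (1). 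By the cone theorem this minimum is realized by a rational curve $C_\varphi$ with $[C_\varphi] \in R_\varphi$, which by $-K_W \cdot C_\varphi = 2$ must satisfy $\xi_\sE \cdot C_\varphi = 1$, giving (2).

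For (3) and (4): since $a\xi_\sE - \pi^* H_X$ lies on $\bR_{\geq 0}[\varphi^* H_Y]$ and $C_\varphi$ is contracted by $\varphi$, intersecting with $C_\varphi$ yields $0 = a - \pi^* H_X \cdot C_\varphi$, so $a = \pi^* H_X \cdot C_\varphi \in \bZ$, which settles (3). For (4), now that $a \in \bZ$, the divisor $L \coloneqq (a+1)\xi_\sE - \pi^* H_X$ is a legitimate element of $\Pic(W)$; one computes
\[
K_W + 2L = 2(a\xi_\sE - \pi^* H_X),
\]
a positive multiple of $\varphi^* H_Y$, so it supports $R_\varphi$. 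Writing $L = \xi_\sE + (a\xi_\sE - \pi^* H_X)$ exhibits $L$ as the sum of the ample divisor $\xi_\sE$ and the nef divisor $a\xi_\sE - \pi^* H_X$, whence $L$ is ample by Kleiman's criterion.

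The only substantive step is the length computation, and within it the delicate point is upgrading the Ionescu-Wi\'sniewski bound $l(R_\varphi) \leq 3$ to equality with $2$. The essential input is the divisibility $-K_W = 2\xi_\sE$, a defining feature of Mukai pairs of rank $2$; without it the argument would leave $l(R_\varphi) = 3$ as a possibility, and the subsequent integrality argument for $a$ would require more care.
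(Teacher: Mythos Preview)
Your proof is correct and follows essentially the same route as the paper: the parity observation $-K_W = 2\xi_\sE$ combined with the Ionescu--Wi\'sniewski inequality and the fiber bound $\dim F \leq 2$ gives $l(R_\varphi)=2$, and then the intersection $(a\xi_\sE - \pi^*H_X)\cdot C_\varphi = 0$ yields $a = \pi^*H_X \cdot C_\varphi \in \bZ$, after which your $L = (a+1)\xi_\sE - \pi^*H_X$ coincides with the paper's choice. One small remark: your invocation of Proposition~\ref{prop:jumpingfib} to produce a fiber with $\dim F = 2$ is not actually needed here---the Ionescu--Wi\'sniewski bound only requires the upper estimate $\dim F \leq 2$ from Proposition~\ref{prop:bound} (applied to any fiber), and the paper accordingly cites only that proposition for part~\ref{cor:length1}.
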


\begin{proof}
Note that $l(R_\varphi) \in 2 \bZ$ since $-K_{\bP(\sE)}=2\xi_\sE$.
The first assertion follows from the Ionescu-Wi\'sniewski inequality (Theorem~\ref{thm:IWineq}) and the inequality $\dim F \leq 2$ (Proposition~\ref{prop:bound}).
The second one follows from the definition of $l(R_\varphi)$.
Let $C_\varphi$ be a curve as in \ref{cor:length2}.
Then $(a\xi_\sE - \pi^*H_X)\cdot C_\varphi =0$.
Therefore, $a =\pi^*H_X\cdot C_\varphi \in \bZ $.
This proves the third assertion.
By setting $L \coloneqq (\pi^*H_X\cdot C_\varphi +1)\xi_\sE - \pi^*H_X$, we have the last assertion.
\end{proof}

\begin{theorem}[Rough description of the other contraction]\label{thm:phi}
Let $(X,\sE)$ be a Mukai pair as in Setting~\ref{set}.
Then one of the following holds:
\begin{enumerate}
 \item \label{thm:phi2}
 The morphism $\varphi$ is a quadric bundle over a smooth projective $3$-fold $Y$.
 \item \label{thm:phi3}
 The morphism $\varphi$ is a special \Banica scroll over a smooth projective $4$-fold $Y$.
\end{enumerate} 
\end{theorem}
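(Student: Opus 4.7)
The plan is to apply the three characterization theorems of Section~\ref{sect:pre}---for blow-ups, quadric bundles, and special \Banica scrolls---to $\varphi$, using the information already gathered in Corollary~\ref{cor:length} and Propositions~\ref{prop:jumpingfib} and \ref{prop:bound}. By Corollary~\ref{cor:length}, $l(R_\varphi)=2$ and there is an ample line bundle $L$ on $W$ such that $K_W+2L$ is a supporting divisor for $\varphi$. I split the argument into cases according to $\dim Y$.

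First I would dispose of Case A: $\varphi$ is of fiber type with $\dim Y = 4$. Then $\dim W-\dim Y+1=2$, so $\varphi$ is a scroll in the sense of Section~\ref{sect:pre}, and Proposition~\ref{prop:bound} gives $\dim F\le 2=\dim W-\dim Y+1$ for every $\varphi$-fiber $F$. Thus the hypotheses of Theorem~\ref{thm:Bscroll} are satisfied and $\varphi$ is a special \Banica scroll over a smooth $4$-fold $Y$, which is outcome (2).

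Next comes Case B: $\varphi$ is of fiber type with $\dim Y = 3$. Now $\dim W-\dim Y=2$ and $\varphi$ is a quadric fibration. To invoke Theorem~\ref{thm:Qbundle} I must verify equidimensionality. The Ionescu–Wi\'sniewski inequality (Theorem~\ref{thm:IWineq}) with $l(R_\varphi)=2$ forces $\dim F\ge 1$ for every fiber, and Proposition~\ref{prop:bound} gives $\dim F\le 2$; so I only have to rule out a $\varphi$-fiber $F$ of dimension exactly $1$. Such an $F$ would be a rational curve with $\xi\cdot F=1$ and $\pi^*H_X\cdot F=a$, hence $\pi|_F$ is non-constant (a $\pi$-fiber is not contracted by $\varphi$, since $(a\xi-\pi^*H_X)\cdot(\pi\text{-fiber})=a>0$). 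Setting $D:=\pi(F)$ and considering the ruled surface $\pi^{-1}(D)$, I would combine Lemma~\ref{lem:BB} (forbidding $\dim\varphi(\pi^{-1}(D))=1$) with the fact that $F$ sits inside the generic $2$-dimensional $\varphi$-fiber to arrive at a contradiction. Once equidimensionality is established, Theorem~\ref{thm:Qbundle} yields outcome (1).

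Finally comes Case C: $\varphi$ is birational. The Ionescu–Wi\'sniewski inequality reads $\dim E+\dim F\ge 6$ for the exceptional locus $E$ and any fiber $F\subset E$; since $\dim E\le 4$ and $\dim F\le 2$ (Proposition~\ref{prop:bound}) we would have equality everywhere, so every nontrivial $\varphi$-fiber has dimension exactly $2$ and Theorem~\ref{thm:Bl} (with $r=2$) identifies $\varphi$ with the blow-up of a smooth $5$-fold $Y$ along a smooth surface $Z$, with fibers $F\cong\bP^2$. The task is then to exclude this possibility. My plan is to study $\pi|_F\colon\bP^2\to X$: since $\bP^2$ has Picard number one and admits no non-constant morphism to a curve, and since a $\pi$-fiber is $\bP^1$, $\pi|_F$ must be finite onto a surface $S\subset X$. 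Then I would write $E\equiv d\xi+c\,\pi^*H_X$ in $\Pic(W)=\bZ\xi\oplus\bZ\pi^*H_X$, use that $E$ meets every $\pi$-fiber (so $d\ge 1$), and compare the pullback of $L|_F=h$ (computed via the adjunction and normal bundle sequence for $F\subset E\subset W$) with the Grothendieck relation~\eqref{eq:Grel} restricted to $F$; alternatively, push the minimal rational curves of $X$ into the ruled surface $\pi^{-1}(C)\cong\bF_1$ and intersect with $E$ to find a numerical inconsistency with the Bogomolov bound of Proposition~\ref{prop:Bogomolov}.

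The main obstacle is Case C. The other two cases are essentially formal consequences of the preceding preparation, but ruling out a genuine blow-up structure on $\bP(\sE)$ is the delicate step, since none of Corollary~\ref{cor:length}, Proposition~\ref{prop:bound}, or Lemma~\ref{lem:BB} by itself is incompatible with a blow-up along a smooth surface; the exclusion seems to require combining the rigid $\bP^2$-fiber geometry with the Bogomolov inequality applied to a well-chosen effective cycle on $W$.
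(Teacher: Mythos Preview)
Your Cases A and C are set up correctly, but Case B contains a misconception: when $\varphi$ is of fiber type with $\dim Y=3$, the generic $\varphi$-fiber already has dimension $5-3=2$, and by upper semicontinuity of fiber dimension \emph{every} fiber has dimension at least $2$. Combined with Proposition~\ref{prop:bound} ($\dim F\le 2$), equidimensionality is immediate. There is no one-dimensional fiber to rule out, so the entire argument involving Lemma~\ref{lem:BB} is superfluous. (This is exactly how the paper handles it: ``If $\dim Y=3$, then $\dim F=2$ for any $\varphi$-fiber $F$ and hence (1) holds by Theorem~\ref{thm:Qbundle}.'')

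The real issue is Case C, and you correctly flag it as the main obstacle. Your plan---writing $E\equiv d\xi+c\,\pi^*H_X$, restricting the Grothendieck relation to $F\simeq\bP^2$, and hoping for a numerical contradiction with the Bogomolov bound---is not carried through, and the ``alternatively'' clause is even vaguer. In fact the Bogomolov inequality has essentially already been squeezed dry in Proposition~\ref{prop:bound}, and a divisorial blow-up with all nontrivial fibers $\bP^2$ is numerically consistent with everything established so far; I do not see how your sketch would close. The paper does \emph{not} exclude this case by internal computations: it observes that $W$ would be a Fano $5$-fold of index two and Picard number two admitting a birational contraction (a blow-up of a smooth $5$-fold along a smooth surface) and a second contraction of fiber type, and then invokes the classification of such manifolds carried out by Chierici--Occhetta (Case~(b) of \cite{CO08}, specifically their Section~4), which shows no such $W$ exists. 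So the missing idea in your proposal is that Case~C is handled by appeal to an external structure theorem rather than by a direct numerical argument on $(X,\sE)$.
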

\begin{proof}
Let $E$ be an irreducible component of $\Exc(\varphi)$ and $F$ a fiber contained in $E$.
Then, by the Ionescu-Wi\'sniewski inequality (Theorem~\ref{thm:IWineq}), we have
\[
\dim E + \dim F \geq \dim \bP (\sE) + l(R_\varphi) -1.
\]
Since $\dim F \leq 2$ (Proposition~\ref{prop:bound}) and $l(R_\varphi ) = 2$, the above inequality yields
\[
\dim E + 2 \geq \dim E + \dim F \geq 6.
\]
In particular, the morphism $\varphi$ is either divisorial or of fiber type.
Hence $E = \Exc (\varphi)$.

First assume that $\varphi$ is of fiber type.
Then, since $\dim F \leq 2$, we have $\dim Y = 3$ or $4$.
If $\dim Y = 3$, then $\dim F = 2$ for any $\varphi$-fiber $F$ and hence \ref{thm:phi2} holds by Theorem~\ref{thm:Qbundle}.
On the other hand, if $\dim Y =4$, then $\varphi$ is a scroll and \ref{thm:phi3} holds by Theorem~\ref{thm:Bscroll}.

Next assume that $\varphi$ is divisorial.
Then $\dim F =2$ for any non-trivial fiber $F$.
Then, by Theorem~\ref{thm:Bl}, the morphism $\varphi$ is obtained by blowing up a smooth projective $5$-fold $Y$ along a smooth surface $S \subset Y$.\[
\xymatrix{
&W\ar[ld]_-\pi \ar[rd]^-\varphi &E \coloneqq \Exc(\varphi) \ar@{_{(}-_>}[l] \ar[rd]&\\
X&&Y&S.\ar@{_{(}-_>}[l]
}
\]
Thus $W$ is a smooth Fano $5$-fold with index two that has a birational contraction $\varphi$ as above and admits a contraction $\pi$ of fiber type.

Thus this Fano $5$-fold $W$ fits Case (b) of \cite{CO08}.
One can deduce from their classification that there is no Fano $5$-fold $W$ as above.
Thus $\varphi$ is not birational.
\end{proof}

\begin{remark}
In the above proof, we used the classification result of Case (b) in \cite{CO08}, which is carried out in Section~4 of their paper.
Note that their argument in \cite[Section~4]{CO08}, more precisely, the proof of \cite[Lemma~4.4]{CO08} relies on \cite[Theorem~1.3]{NO07},
which, as we mentioned, seems to contain a gap in the proof.
Nevertheless, their usage of \cite[Theorem~1.3]{NO07} is mild; they only use the fact that the minimum anticanonical degree of rational curves on $X$ is two or three, which we already have proved.
Thus we can avoid a circular argument.
\end{remark}

\section{Case: $\varphi$ is a quadric bundle}\label{sect:quarticDP}
As before, $(X,\sE)$ is a pair as in Setting~\ref{set}.
Then, by Theorem~\ref{thm:phi}, $\varphi$ is either a quadric bundle or a special \Banica scroll.
In this section, we will deal with the case of quadric bundle, and prove the following theorem.
\begin{theorem}\label{thm:quarticDP}
 Let $(X,\sE)$ be a pair as in Setting~\ref{set}.
 Assume that the second contraction $\varphi$ is a quadric bundle.
 Then $(X ,\sE)$ is isomorphic to 
 \[
 (V_4, p^*\sS^* _{\bQ^4} (1)),
 \]
 where $V_4$ is a quartic del Pezzo $4$-fold obtained as a double cover $p \colon V_4 \to \bQ ^4$ branched along a smooth divisor $B \in \left| \cO_{\bQ^4}(2) \right|$, and $\sS_{\bQ^4}$ is a spinor bundle on $\bQ^4$.
\end{theorem}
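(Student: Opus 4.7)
The plan proceeds in four stages, starting from the key input that the half-line $\tR$ coincides with the extremal ray $R_\varphi$ (Propositions~\ref{prop:Ineq} and \ref{prop:comp}, whose proofs via Bogomolov I assume). By Lemma~\ref{lem:arX} this forces $a r_X = 3$, so either $(a,r_X) = (1,3)$ or $(3,1)$. The first task is to rule out $(a,r_X) = (3,1)$ under the quadric-bundle hypothesis: writing $\varphi^*H_Y = b(a\xi - \pi^*H_X)$ with $b \in \bZ_{>0}$ and using that the supporting divisor $K_W + 2L = 2(a\xi - \pi^*H_X)$ from Corollary~\ref{cor:length}~\ref{cor:length4} must be a positive integer multiple of $\varphi^*H_Y$, one restricts $b \in \{1,2\}$. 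Restricting the Grothendieck relation \eqref{eq:Grel} to a general smooth $\varphi$-fiber $F \cong \bQ^2$ (on which adjunction gives $\xi|_F = H_F$, and $\varphi^*H_Y|_F = 0$ gives $\pi^*H_X|_F = a H_F$) together with Bogomolov's inequality (Proposition~\ref{prop:Bogomolov}) then pins down $(a,b) = (1,1)$, so $r_X = 3$ and $\varphi^*H_Y = \xi - \pi^*H_X$.

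The second stage is to identify $Y \cong \bP^3$. By Theorem~\ref{thm:Qbundle}, $Y$ is a smooth projective $3$-fold, and $\rho_Y = 1$ since $\rho_W = \rho_X + \rho_Y = 2$. From $\varphi^*H_Y = \xi - \pi^*H_X$ each $\pi$-fiber $\pi^{-1}(x)$ maps to a degree-one rational curve in $Y$, giving $Y$ an unsplit covering family of ``lines'' parameterized by $X$, with a $2$-dimensional such family through a general point. A Chern-class computation using the quadric-bundle embedding $W \hookrightarrow \bP_Y(\sF)$ from Theorem~\ref{thm:Qbundle} together with adjunction on a smooth fiber yields the index $r_Y = 4$; by Kobayashi--Ochiai \cite{KO73} this identifies $Y \cong \bP^3$.

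The third stage produces the double cover. The assignment $x \mapsto [\varphi(\pi^{-1}(x))] \in \Gr(2,4) = \bQ^4$ gives a morphism $p \colon X \to \bQ^4$; Lemma~\ref{lem:BB} forbids positive-dimensional fibers, so $p$ is finite and, by dimension, surjective. Writing $p^*H_{\bQ^4} = \lambda H_X$ for some $\lambda \in \bZ_{>0}$, Hurwitz with $-K_X = 3H_X$ and $-K_{\bQ^4} = 4H_{\bQ^4}$ gives a ramification divisor $R = (4\lambda - 3)H_X$; combining the degree relation $\lambda^4 H_X^4 = 2\deg p$ with the numerical data obtained in the previous stage (in particular $H_Y^3 = 1$, which via $\int_W \varphi^*H_Y^3 \cdot \pi^*H_X^2 = 2H_Y^3$ and the Grothendieck relation expresses as a linear constraint between $H_X^4$ and $c_2(\sE)\cdot H_X^2$, further constrained by Bogomolov) pins down $\lambda = 1$ and $\deg p = 2$. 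Hence $X$ is a double cover of $\bQ^4$ branched along a divisor $B \in |\cO_{\bQ^4}(2)|$ that must be smooth since $X$ is, so $X \cong V_4$.

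Finally, since $\bP(\sS^*_{\bQ^4}(1)) \cong \mathrm{Fl}(1,2;4)$ whose other projection lands on $\bP^3 = Y$, the commutative diagram formed by $\pi, \varphi, p$ identifies $W$ with the fiber product $X \times_{\bQ^4}\bP(\sS^*_{\bQ^4}(1))$; taking $\pi_*\cO_W(\xi)$ then yields $\sE \cong p^*\sS^*_{\bQ^4}(1)$. The main obstacle is the second stage, namely the identification $Y \cong \bP^3$: the computation of the index $r_Y = 4$ is delicate and requires carefully controlling both the fiber geometry of the quadric bundle and the structure of the $\pi$-fibers as lines in $Y$, with the abundance of the latter (rather than Bogomolov alone) being the decisive geometric input.
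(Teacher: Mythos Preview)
Your opening move is the problem. Propositions~\ref{prop:Ineq} and \ref{prop:comp} are stated and proved in the paper only under the hypothesis of Theorem~\ref{thm:quinticDP}, i.e.\ when $\varphi$ is a special \Banica scroll; their proofs hinge on intersecting $\pi^*\varDelta$ with the jumping divisors $E_i=\varphi^{-1}(B_i)$, objects that simply do not exist in the equidimensional quadric-bundle situation. So you cannot import $\tR=R_\varphi$ (equivalently $ar_X=3$) as an input here, and everything you build on it in stages~1--3 is unsupported. Your fallback sketch (``Bogomolov restricted to a fiber $F\cong\bQ^2$'') does not rescue this: on $F$ one only sees $\pi^*\varDelta|_F=(2-ar_X)^2H_F^2$, which is a nonnegative multiple of a point class for every value of $ar_X$ and carries no information about the sign of $\varDelta\cdot H_X^2$ on $X$. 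Likewise, your route to $r_Y=4$ via the embedding $W\hookrightarrow\bP_Y(\sF)$ is underdetermined: adjunction gives $-K_Y=\det\sF+D-2H_Y$ for an unknown twist $D$, and you have not explained how to pin down $\det\sF$ or $D$.

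The paper's proof runs in the opposite order and avoids the issue entirely. It first shows $Y\cong\bP^3$ \emph{without} knowing $a$ or $r_X$: restricting the relative Euler sequence to a general fiber $F\cong\bP^1\times\bP^1$ and using Kodaira vanishing, one finds $\sE|_F$ splits, so $\bP(\sE|_F)$ is toric and dominates $Y$, whence $Y\cong\bP^3$ by \cite{OW02}. Only then does it use the two intersection-theoretic identities $(a\xi-\pi^*H_X)^3\cdot\xi^2=2$ and $(a\xi-\pi^*H_X)^4=0$ (valid because $\varphi^*H_Y=a\xi-\pi^*H_X$ on the nose, as this class is primitive) to solve explicitly for $H_X^4$, $c_2(\sE)\cdot H_X^2$, $c_2(\sE)^2$ as rational functions of $(a,r_X)$; integrality then forces $(a,r_X)=(1,3)$ and $H_X^4=4$. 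The equality $\tR=R_\varphi$ is thus a \emph{consequence}, not an input, in the quadric-bundle case. Your stages~3--4 (the map to $\Gr(2,4)\cong\bQ^4$, finiteness via Lemma~\ref{lem:BB}, degree~$2$ from $H_X^4=4$, and the identification of $\sE$) are essentially the paper's endgame and are fine once $a=1$, $r_X=3$, $H_X^4=4$ are in hand.
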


The rest of this section is occupied with the proof of Theorem~\ref{thm:quarticDP}.
First, we will determine the isomorphic class of the image $Y$ of the second contraction $\varphi$.
\begin{lemma}
Let $(X,\sE)$ be a pair as in Theorem~\ref{thm:quarticDP}.
Then $Y$ is isomorphic to $\bP^3$.
\end{lemma}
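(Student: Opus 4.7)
The plan is to prove $Y\simeq\bP^3$ by exhibiting a four-dimensional unsplit covering family of rational curves on $Y$ coming from the $\pi$-fibres, and invoking a characterisation of projective space via the size of this family.

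First I would verify that $Y$ is a smooth Fano threefold with $\rho_Y=1$. Smoothness is part of Theorem~\ref{thm:phi}; $\rho_Y=1$ follows from $\varphi$ being elementary with $\rho_W=2$; and since $Y=\varphi(W)$ is the image of the rationally connected Fano $W$, it is itself rationally connected, hence Fano by the Picard-number-one hypothesis.

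Next I would build the family. For $x\in X$ put $\gamma_x\coloneqq\varphi(\pi^{-1}(x))$. Each $\gamma_x$ is a rational curve (not a point), since otherwise the class of $\pi^{-1}(x)$ would lie in $R_\pi\cap R_\varphi=0$. The curves $\{\gamma_x\}_{x\in X}$ cover $Y$. The classifying morphism $\psi\colon X\to\operatorname{Chow}(Y)$, $x\mapsto[\gamma_x]$, cannot contract any curve of $X$: such a contracted $D\subset X$ would give $\varphi(\pi^{-1}(D))$ equal to a single curve of dimension $1$, forbidden by Lemma~\ref{lem:BB}. So $\psi$ is finite, the family has dimension $4$, and (each $\gamma_x$ being irreducible) it is unsplit.

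The decisive observation is the size of the subfamily through a general point. For $y\in Y$, the locus $\{x:y\in\gamma_x\}$ equals $\pi(F_y)$, where $F_y\coloneqq\varphi^{-1}(y)\simeq\bQ^2$; the same $R_\pi\cap R_\varphi=0$ argument forbids any $\pi$-fibre from sitting inside $F_y$, so $\pi|_{F_y}$ is finite and $\pi(F_y)\subset X$ is a surface. Thus through a general $y$ the subfamily of our curves has dimension $2=\dim Y-1$, the maximum possible. Applying bend-and-break to this unsplit family gives $-K_Y\cdot\gamma_x\geq\dim Y+1=4$, and the tangent map at a general $y$ dominates $\bP(T_yY)\simeq\bP^2$; hence the variety of rational tangents at a general point fills the projectivised tangent space, which by the characterisation of projective space due to Cho--Miyaoka--Shepherd-Barron (together with Kebekus' finiteness of the tangent map and Hwang--Mok) forces $Y\simeq\bP^3$.

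The hard part I expect is the final step: although the size of our family points unambiguously to $\bP^3$, making this rigorous requires either the VMRT characterisation in a form applying to potentially non-minimal families, or else a case analysis using Theorem~\ref{thm:PQ} to reduce to $\{\bP^3,\bQ^3\}$ and excluding $\bQ^3$ by the incompatibility of $l_{\bQ^3}=3$ with the bound $-K_Y\cdot\gamma_x\geq 4$ forced on our family through the numerical relation $\varphi^*H_Y=a\xi_\sE-\pi^*H_X$.
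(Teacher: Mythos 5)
Your setup is correct and genuinely different from the paper's route: the family $\{\gamma_x\coloneqq\varphi(\pi^{-1}(x))\}$ is well defined, the finiteness of the classifying map via Lemma~\ref{lem:BB} is exactly right, and the computation that the subfamily through any $y\in Y$ is parametrized by the surface $\pi(\varphi^{-1}(y))$ is correct (no $\pi$-fibre can lie in a $\varphi$-fibre since $R_\pi\neq R_\varphi$). The gap is in the step you yourself flag as hard, and it is a real one. Bend-and-break does not give the lower bound $-K_Y\cdot\gamma_x\geq 4$: bend-and-break produces \emph{upper} bounds on degrees of curves through fixed points, and the deformation-theoretic dimension count $\dim_{[f]}\RC^n(Y)\geq -K_Y\cdot\gamma_x+\dim Y-3$ bounds the dimension of the full component of $\RC^n(Y)$ from below in terms of the degree, never the degree from below in terms of the dimension of your (a priori smaller) subfamily. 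A priori $H_Y\cdot\gamma_x$ divides $a$ and nothing yet prevents, say, $-K_Y\cdot\gamma_x=3$. Your fallback is also broken at both ends: Theorem~\ref{thm:PQ} requires $l_Y\geq\dim Y$, i.e.\ a bound on \emph{all} free rational curves on $Y$, not just on the $\gamma_x$, and this is not established (the paper has to work for this in the scroll case); and $l_{\bQ^3}=3$ is perfectly compatible with the existence of some family of degree $\geq4$, so the exclusion of $\bQ^3$ as phrased does not follow. The VMRT version needs your family to be a \emph{minimal} dominating family and needs the tangent map $V_y\to\bP(T_yY)$ to be dominant, neither of which is checked.

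The approach can be repaired, but by a different citation than the one you reach for: from $\dim V_y=2$ and unsplitness, \cite[IV.2.6]{Kol96} gives $\dim\Locus(V_y)\geq\dim V_y+1=3$, so two general points of $Y$ are joined by an irreducible curve of the family; the form of the Cho--Miyaoka--Shepherd-Barron theorem for an unsplit, doubly dominant family then yields $Y\simeq\bP^3$ directly, with no degree bound needed. For comparison, the paper avoids rational curves on $Y$ altogether: it shows that $\theta_F\colon\bP(\sE|_F)\to Y$ is surjective for a general $\varphi$-fibre $F\simeq\bP^1\times\bP^1$, that the restricted relative Euler sequence splits by Kodaira vanishing so that $\bP(\sE|_F)$ is toric, and then quotes \cite{OW02} to conclude that the image $Y$ (having Picard number one) is a projective space. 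That argument is shorter and sidesteps all questions about which characterization of $\bP^n$ applies, at the cost of invoking the toric-image theorem; yours, once closed as above, stays entirely inside the rational-curve machinery already set up in Sections~\ref{sect:pre} and \ref{sect:RC}.
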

\begin{proof}
Let $F$ be a general $\varphi$-fiber.
Then by taking a base change of the projection $\pi$ by the morphism $\pi|_F$, we have the following diagram: 
\begin{equation}\label{diagram:fiber}
\vcenter{
\xymatrix{
  \bP(\sE |_{F}) \ar[d]_-{\pi_{F}}  \ar[r] \ar@(ur,ul)[rr]^-{\theta _{F}}  & \bP(\sE) \ar[d]_-{\pi} \ar[r]_-{{\varphi}} & Y         \\
F \ar[r]^-{\pi|_{F}} & X.                                        & \\
}
}
\end{equation}
The morphism $\theta _F$ is surjective.
Otherwise $\dim  \theta_F (\bP(\sE|_F)) < \bP(\sE|_F)$.
Then, by the bend and break lemma \cite[Chapter II, Theorem~5.4]{Kol96}, there is a curve $D \subset F$ such that $\dim \theta_F (\pi_F^{-1}(D)) =1$.
This contradicts Lemma~\ref{lem:BB}.

By adjunction, $F$ is isomorphic to $\bP^1 \times \bP^1$ and
\[
\cO(\xi_\sE)|_F \simeq \pr_1^*\cO(1) \otimes \pr_2^*\cO(1),
\]
where $\pr_i \colon \bP^1 \times \bP^1 \to \bP^1$ is the projection to the $i$-th factor.

By restricting the relative Euler sequence of the projectivization $\bP(\sE) \to X$, we have the following exact sequence:
\[
0 \to \cO(K_\pi +\xi _\sE)|_F  \to \sE|_F \to \pr_1^*\cO(1) \otimes \pr_2^*\cO(1) \to 0. \]

Since $K_\pi \simeq -2 \xi_\sE+ r_X \pi^*H_X$, the bundle $\cO(K_\pi +\xi _\sE)|_F$ is isomorphic to
\[
\cO(r_X \pi^*H_X)|_F \otimes  \pr_1^*\cO(-1) \otimes \pr_2^*\cO(-1).
\]
Thus the class of the above exact sequence belongs to
\[
\Ext^1(\pr_1^*\cO(1) \otimes \pr_2^*\cO(1),\cO(r_X \pi^*H_X)|_F \otimes  \pr_1^*\cO(-1) \otimes \pr_2^*\cO(-1)),
\]
which is isomorphic to
\[
H^1(K_F\otimes \cO(r_X \pi^*H_X)|_F).
\]
Since $F$ is a $\varphi$-fiber, the map $\pi|_F \colon F \to X$ is finite, and hence the divisor $\pi^*H_X|_F$ is ample.
Thus, by the Kodaira vanishing theorem, the cohomology group $
H^1(K_F\otimes \cO(r_X \pi^*H_X)|_F)$ is trivial, and the above exact sequence splits.
Therefore, $\bP(\sE|_F)$ is a toric variety.
Now the assertion follows from \cite{OW02}.
\end{proof}

Second, we will determine the values $a$, $r_X$ and $H_X^4$. 
Recall that $a$ is the integer for which the divisor $a\xi_\sE -\pi^*H_X$ is a supporting divisor of the contraction $\varphi$.
\begin{lemma}
Let $(X,\sE)$ be a pair as in Theorem~\ref{thm:quarticDP}.
Then $a=1$, $r_X =3$ and $H_X^4 =4$.
\end{lemma}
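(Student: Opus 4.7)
The plan is to exploit the quadric bundle structure of $\varphi \colon W \to Y \simeq \bP^3$ to derive numerical constraints pinning down $a$, $r_X$, and $H_X^4$. My first step is to identify $\varphi^*H_Y$ explicitly in $\Pic(W) = \bZ\xi_\sE \oplus \bZ\pi^*H_X$: by the definition of $a$, the class $a\xi_\sE - \pi^*H_X$ is a positive multiple of $\varphi^*H_Y$, and it is primitive in $\Pic(W)$ since the coefficients $(a,-1)$ are coprime. Since a general fiber $F \simeq \bQ^2$ has $H^i(\cO_F) = 0$ for $i > 0$, Grauert's theorem gives $R^i\varphi_*\cO_W = 0$ for $i > 0$, and so $\varphi^* \colon \Pic(Y) \hookrightarrow \Pic(W)$ is an isomorphism onto the sublattice of divisor classes trivial on general fibers. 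Comparing primitive generators yields $\varphi^*H_Y = a\xi_\sE - \pi^*H_X$.

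With $t := ar_X$, $h := H_X^4$, $q := H_X^2 \cdot c_2(\sE)$ and $e := c_2(\sE)^2$, I would next assemble three key numerical identities. Since $H_Y^4 = 0$ on $\bP^3$, the cycle $(a\xi_\sE - \pi^*H_X)^4$ vanishes on $W$; since $H_Y^3 = [\pt]$, the cycle $(a\xi_\sE - \pi^*H_X)^3$ equals $[F]$ for a general $\varphi$-fiber $F$, on which $\xi_\sE|_F = \cO(1,1)$ forces $[F]\cdot \xi_\sE^2 = 2$. The relations
\[
(a\xi_\sE - \pi^*H_X)^4 \cdot \pi^*H_X = 0, \qquad (a\xi_\sE - \pi^*H_X)^4 \cdot \xi_\sE = 0, \qquad (a\xi_\sE - \pi^*H_X)^3 \cdot \xi_\sE^2 = 2
\]
are then expanded and pushed forward to $X$ using the Segre identities $\pi_*\xi_\sE^{k+1} = s_k(\sE)$. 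The first factors as $(t-2)\bigl[ah(t^2-2t+2) - 2a^3 q\bigr] = 0$, and since $t = ar_X \geq 3$ by Lemma~\ref{lem:arX}, it yields $q = h[(t-1)^2+1]/(2a^2)$. Substituting into the remaining two identities and eliminating $e$, I expect the residue to collapse to the clean relation $H_X^4 \cdot (ar_X - 2)^2 = 4a$.

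The final step is case analysis, using $r_X \in \{1,3\}$ (Proposition~\ref{lem:lX}), the inequality $ar_X \geq 3$, and the integrality of $h$ and $q$. For $r_X = 3$, integrality of $h = 4a/(3a-2)^2$ leaves only $a = 1$, yielding $h = 4$. For $r_X = 1$, integrality of $h = 4a/(a-2)^2$ admits only $a=3$ (with $h=12$) or $a=4$ (with $h=4$); in both subcases, however, the formula above gives $q = 10/3$ and $q = 5/4$ respectively, contradicting $q \in \bZ$. The main obstacle I anticipate is the bookkeeping in the middle step: the collapse to $(ar_X - 2)^2$ is not apparent until all three pushforwards are assembled, and requires careful tracking of binomial coefficients and Segre-class expansions in both $a$ and $r_X$.
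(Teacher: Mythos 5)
Your proposal is correct and follows essentially the same route as the paper: identify $\varphi^*H_Y=a\xi_\sE-\pi^*H_X$ by primitivity, extract the numerical relations from $(a\xi_\sE-\pi^*H_X)^4=0$ (paired with $\xi_\sE$ and $\pi^*H_X$) and $(a\xi_\sE-\pi^*H_X)^3\cdot\xi_\sE^2=2$, solve for the invariants, and finish with the same integrality case analysis on $r_X\in\{1,3\}$ and $ar_X\geq3$. Your intermediate formulas agree with the paper's solved values $H_X^4=4a/(ar_X-2)^2$ and $c_2(\sE)\cdot H_X^2=2\bigl((ar_X)^2-2ar_X+2\bigr)/\bigl(a(ar_X-2)^2\bigr)$, so the argument goes through as you outline.
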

\begin{proof}
 We will denote by $H_Y$ the divisor class of a hyperplane in $Y \simeq \bP^3$.
 Since the divisor $a\xi_\sE -\pi^*H_X$ is not a multiple of another divisor, we have
 \[
 a\xi_\sE -\pi^*H_X = \varphi^*H_Y.
 \]
Thus
\[
 (a\xi_\sE -\pi^*H_X)^3\cdot\xi_\sE^2 
=2
\text{ and }
(a\xi_\sE -\pi^*H_X)^4=0.
\]
Set
\begin{align*}
 x&\coloneqq H_X^4,\\
 y&\coloneqq c_2(\sE)\cdot H_X^2,\\
 z&\coloneqq c_2(\sE)^2.
\end{align*}
By a direct calculation with the Grothendieck relation
\[
 \xi_\sE^2=\xi_\sE\cdot (r_X\pi^*H_X) - \pi^*c_2(\sE),
\]
 the above equations yield
\begin{align*}
(r_X^4a^3-3r_X^3a^2+3r_X^2a-r_X)x-(3r_X^2a^3-6r_Xa^2+3a)y+a^3z= 2,\\
(r_X^3a^4-4r_X^2a^3+6r_Xa^2-4a)x+(4a^3-2r_Xa^4)y=0,\\
x-(r_X^2a^4-4r_Xa^3+6a^2)y+a^4z= 0.
\end{align*}
Solving this system of linear equations for $(x,y,z)$, we have 
\begin{align}
x&=\frac{4a}{(r_Xa-2)^2}, \label{eq:Q1}\\
y &= \frac{2r_X^2a^2-4r_Xa +4}{a(r_Xa-2)^2},\label{eq:Q2}\\
z &=\frac{2r_X^4a^4-12r_X^3a^3+32r_X^2a^2-40r_Xa+20}{a^3(r_Xa-2)^2}.\label{eq:Q3}
\end{align}
Note that, by Lemmas~\ref{lem:lX} and \ref{lem:arX}, we have $r_X=1$ or $3$, and $r_Xa \geq 3$.
Assume $r_X =1$. Then, by \eqref{eq:Q1}, we have
\[
\frac{4a}{(a-2)^2} \in \bZ.
\]
Thus $a = 3$ or $4$.
In each case, however, we have $y = 5/4$ or $10/3$ respectively, which is impossible since $y= c_2(\sE)\cdot H_X^2 \in \bZ$.
Thus we have $r_X =3$.
Then, since $x=\frac{4a}{(3a-2)^2} \in \bZ$, we have $a=1$.
This completes the proof.
\end{proof}

\begin{proof}[Proof of Theorem~\ref{thm:quarticDP}]
Since $\xi_\sE-\pi^*H_X =\varphi^*H_Y$, each $\pi$-fiber maps to a line in $Y \simeq \bP^3$, and, by the universality of Grassmannian variety, we have the following commutative diagram:
\[
\xymatrix{
  \bP(\sE ) \ar[d]_-{\pi} \ar[r] \ar@(ur,ul)[rr]^-{\varphi}  & \bP(\sS ^* _\bQ) \ar[d]_-{\pi_\bQ^4} \ar[r] & \bP^3         \\
X \ar[r]^-{p} & \bQ^4 \simeq \Gr(2,4),& \\
}
\]
with the condition $p^*\sS_{\bQ^4}^* \simeq \sE(-1)$.
In particular $p^*\cO_{\bQ^4}(1) \simeq \cO_X(1)$.

Since $\rho_X=1$, the morphism $p$ is finite and surjective.
Moreover, the covering $p$ is of degree two since $H_X ^4 =4$.
Then, by adjunction, we see that the double covering $p$ is branched along a smooth divisor $B \in \left| \cO_{\bQ^4}(2)\right|$.
This completes the proof.
\end{proof}

\section{Case: $\varphi$ is a special B\u anic\u a scroll}\label{sect:quinticDP}
Let $(X,\sE)$ be a pair as in Setting~\ref{set}.
In this final section, we will deal with the case where the second contraction $\varphi$ is a special \Banica scroll, and thus complete the proof of Theorem~\ref{thm:main}.

\begin{theorem}[Special \Banica scroll]\label{thm:quinticDP}
Let $(X,\sE)$ be a pair as in Setting~\ref{set}.
Assume that $\varphi$ is a special \Banica scroll.
Then the pair $(X,\sE)$ is isomorphic to
\[
(V_5, \sS ^*_{V_5} (1)),
\]
where $V_5$ is a general linear section of the Grassmannian $\Gr (2,5)$ embedded into $\bP ^9$ via the Pl\"ucker embedding, and $\sS _{V_5}$ is the restriction of the universal subbundle on $\Gr(2,5)$ to $V_5$.
\end{theorem}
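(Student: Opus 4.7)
The plan is to follow the pattern of Section~\ref{sect:quarticDP}: first identify the target $Y$ of the second contraction, next pin down the integer $a$ with $\varphi^*H_Y = a\xi_\sE - \pi^*H_X$ together with $r_X$ and some Chern invariants of $\sE$, and finally use the classifying morphism of $\pi$-fibers to realize $X$ as a linear section of $\Gr(2,5)$. Throughout I assume the forthcoming Propositions~\ref{prop:Ineq} and \ref{prop:comp}, so that $\tR = R_\varphi$.

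\emph{Identifying $Y$.} By Theorem~\ref{thm:phi}~\ref{thm:phi3}, $Y$ is smooth of dimension $4$, and since $\rho_W = 2$ with both extremal rays contracted, $\rho_Y = 1$. Writing $W = \bP(\sF)$ for the rank-$2$ \Banica sheaf $\sF \coloneqq \varphi_*\cO_W(\xi_\sE)$, the canonical bundle formula for a \Banica scroll gives $K_Y = -c_1(\sF)$, so $Y$ is Fano of positive index. I would then exhibit free rational curves on $Y$ of anticanonical degree at least $4$---natural candidates are the $\varphi$-images of $\pi$-fibers or of minimal lifts, using $\varphi^*H_Y = a\xi_\sE - \pi^*H_X$ to compute degrees---in order to deduce $l_Y \geq 4$ and invoke Theorem~\ref{thm:PQ}, forcing $Y \in \{\bP^4, \bQ^4\}$. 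The case $\bQ^4$ is then to be excluded by an incompatibility argument, for instance via a Chern-number obstruction, or by arguing that no rank-$2$ \Banica sheaf on $\bQ^4$ reproduces the numerical data of $W$.

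\emph{Determining $(a, r_X, H_X^4)$ and identifying with $V_5$.} By Lemma~\ref{lem:arX} and $\tR = R_\varphi$ we have $ar_X = 3$, which combined with Proposition~\ref{lem:lX} leaves $(a,r_X) \in \{(1,3),(3,1)\}$. Setting $x = H_X^4$, $y = c_2(\sE)\cdot H_X^2$, $z = c_2(\sE)^2$, expanding the identity $(a\xi_\sE - \pi^*H_X)^5 = 0$ (which holds since $\varphi^*H_Y^5 = 0$) via the Grothendieck relation \eqref{eq:Grel} produces a linear relation on $x,y,z$, in direct analogy with the computation leading to \eqref{eq:Q3}. Combined with the Bogomolov inequality $r_X^2 x \leq 4y$ (Proposition~\ref{prop:Bogomolov}), the positivity of $H_Y^4 = \xi_\sE(a\xi_\sE - \pi^*H_X)^4$, and the integrality of intersection numbers, this system is expected to force $(a,r_X) = (1,3)$ and $x = 5$. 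With $a = 1$ and $Y \cong \bP^4$, each $\pi$-fiber $\pi^{-1}(x) \cong \bP^1$ maps isomorphically via $\varphi$ to a line in $\bP^4$, and the universal property of the Grassmannian yields a morphism $p\colon X \to \Gr(2,5)$ with $p^*\sS^*_{\Gr(2,5)}(1) \cong \sE$. Since $\rho_X = 1$, the map $p$ is finite; matching $(p^*\cO(1))^4 = H_X^4 = 5$ against $\deg\Gr(2,5) = 5$ in the Pl\"ucker embedding then forces $p$ to be a closed immersion onto a codimension-two linear section, giving $(X,\sE) \cong (V_5, \sS^*_{V_5}(1))$.

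The principal obstacle is expected to lie in the Diophantine step: the single relation $(a\xi_\sE - \pi^*H_X)^5 = 0$ together with $r_X^2 x \leq 4y$ does not obviously exclude $(a,r_X) = (3,1)$, and a further geometric input seems necessary. The most promising source is the jumping locus $Z \subset Y$ of the special \Banica scroll (Proposition~\ref{prop:Bscroll}): the smoothness of $Z$, the $\bP^2$-bundle structure of $E = \varphi^{-1}(Z)$, and the behavior of $\pi|_E$ ought to supply an extra Chern-class identity or positivity constraint ruling out $(3,1)$. A similar care is needed to pin down $Y \cong \bP^4$ without circular appeal to $a = 1$; interleaving the two arguments, rather than carrying them out sequentially, is likely the cleanest route.
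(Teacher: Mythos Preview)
Your overall architecture---identify $Y$, pin down $(a,r_X)$ and the Chern numbers, then map to $\Gr(2,5)$---matches the paper's, and you correctly locate the two genuine difficulties. However, the specific methods you propose for each do not close the gaps, and the paper's actual arguments are different in both places.

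\textbf{Identifying $Y$.} Your plan to show $l_Y \geq 4$ from the candidates you name does not work. Minimal lifts are contracted by $\varphi$ once $\tR = R_\varphi$, so they give no curves on $Y$. The $\varphi$-image of a $\pi$-fiber has $H_Y$-degree $a$, hence $(-K_Y)$-degree $r_Y a$; but $r_Y$ is unknown at this stage (Proposition~\ref{prop:Ineq} still permits $r_Y=1$ when $ar_X=3$), so no lower bound on the anticanonical degree follows. The paper in fact only establishes $l_Y \geq 3$, which is too weak for Theorem~\ref{thm:PQ}. Instead it works through the jumping locus: a Betti-number comparison shows each $B_i \simeq \bP^1$; one checks $\sE|_F \simeq \cO(2)\oplus\cO(1)$ on every jumping fiber $F\simeq\bP^2$; then the composite $\bP(\sE|_{E_i}) \to Y$ factors through a $\bP^3$-bundle over $B_i\simeq\bP^1$, a toric variety surjecting onto $Y$, and a theorem of Occhetta--Wi\'sniewski \cite{OW02} forces $Y\simeq\bP^4$.

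\textbf{Determining $(a,r_X)$.} The relation $(a\xi_\sE-\pi^*H_X)^5=0$ together with Bogomolov is, as you suspect, not enough to exclude $(a,r_X)=(3,1)$, and further jumping-locus input does not obviously help here either. The paper instead uses Hirzebruch--Riemann--Roch: once $Y\simeq\bP^4$, the identity $\chi\bigl(m(a\xi_\sE-\pi^*H_X)\bigr)=\binom{m+4}{4}$ expands via $\td(W)=\pi^*\td(X)\cdot\td(T_\pi)$ into five equations in $x,y,z$ and two additional unknowns $c_2(X)\cdot H_X^2$, $c_2(X)\cdot c_2(\sE)$. For $(a,r_X)=(3,1)$ this linear system is inconsistent; for $(a,r_X)=(1,3)$ it yields $y=(5x-1)/2$ and $z=(13x-7)/2$. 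The final step is then Schubert calculus rather than a bare degree match: writing $j_*[X]=m(c\,\sigma_{1,1}+d\,\sigma_{2,0})$ and comparing with the HRR relations forces $m=c=d=1$, whence $j$ is birational onto its image and $X\simeq V_5$. Your version ``$H_X^4=5=\deg\Gr(2,5)$ forces an embedding'' would need, at minimum, that $\Gr(2,5)$ contains no linearly embedded $\bP^4$ and that the image is already smooth.
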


First, we will set up the notations. 
\begin{notation}[$\sF$, $E$, $B$, $d_i$, $\tW$, $\tY$, $\tE$, $\tB$]
 From now on, we will assume that $(X,\sE)$ is a pair as in Theorem~\ref{thm:quinticDP}.
 Recall that $W$ denotes the projectivization $\bP(\sE)$.

Set $\sF \coloneqq \varphi _* \cO(\xi_\sE)$.
Then, by Theorem~\ref{thm:Bscroll}, the sheaf $\sF$ is a \Banica sheaf and the morphism $\varphi$ is the projection of the projectivization $\bP(\sF) \to Y$.
Then general $\varphi$-fibers are isomorphic to $\bP^1$, and the other fibers $\bP^2$;
let 
\[
B \coloneqq \{\, y\in Y \mid \varphi^{-1}(y) \simeq \bP^2 \,\}
\]
 be the locus parametrizing jumping fibers and $E \coloneqq \varphi^{-1}(B)$ the preimage of $B$.
 Then, by Proposition~\ref{prop:Bscroll}, $Y$ is smooth, $B$ is a disjoint union of smooth irreducible curves $B_i$ for $i = 1, \dots, m$, and the morphism $E \to B$ is a $\bP^2$-bundle.
Set $E_i \coloneqq \varphi^{-1}(B_i)$.

Let $\tW \to W$ (resp.\ $\tY \to Y$) be the blowing-up of $W$ (resp.\ $Y$) along $E$ (resp.\ $B$), and $\tE_i$ (resp.\ $\tB_i$) the exceptional divisor over $E_i$ (resp.\ $B_i$).
Then, by Proposition~\ref{prop:Bscroll} again, we have the following commutative diagram:
 \[
 \xymatrix{
 &\tE = \coprod \tE_i \ar@{_{(}-_>}[ld] \ar[dd] \ar[rr]^-{\tvarphi_{\tE}=\coprod\tvarphi_{\tE_i}}&&\tB= \coprod \tB_i\ar@{_{(}-_>}[ld] \ar[dd]\\
 \tW \ar[rr]^(0.6){\tvarphi} \ar[dd] && \tY \ar[dd]&\\
 &E = \coprod E_i \ar@{_{(}-_>}[ld] \ar[rr]&  & B= \coprod B_i\ar@{_{(}-_>}[ld]\\
 W \ar[d]^-\pi \ar[rr]^-\varphi && Y &\\
 X&&&
 }
 \]
with the following conditions:
\begin{enumerate}
 \item $\tvarphi \colon \tW \to \tY$ is a $\bP^1$-bundle,
 \item $\tE = \tvarphi^{-1}(\tB)$.
\end{enumerate}

As $Y$ is smooth uniruled projective manifold with Picard number one, it is a Fano manifold with Picard number one.
As usual, $H_Y$ is the ample generator of $\Pic (Y)$, $r_Y$ the index of $Y$ and $l_Y$ the length of $Y$.
Let $d_i \coloneqq H_Y \cdot B_i$ be the degree of $B_i$ with respect to $H_Y$.
Note that $\xi_\sE = \xi_\sF$ on $W$.

\end{notation}

In the following lemma, we will see that the pair $(Y,\sF)$ is nearly a Mukai pair; the sheaf $\sF$ is not locally free, but the pair satisfies conditions in the definition of Mukai pairs.
\begin{lemma}
 Let $(X,\sE)$ be a pair as in Theorem~\ref{thm:quinticDP}.
Then
\begin{enumerate}
 \item \label{lem:Mukai1}$\sF$ is ample and $c_1(Y) = c_1(\sF)$.
 \item \label{lem:Mukai2}$l_Y \geq 3$.
\end{enumerate}
\end{lemma}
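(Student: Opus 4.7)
For (1), the plan is direct. The ampleness of $\sF$ is equivalent to the ampleness of $\xi_\sF = \xi_\sE$ on $\bP(\sF) = W$, which holds because $-K_W = 2\xi_\sE$ by the Mukai-pair structure of $(X,\sE)$. To identify $c_1(\sF)$ with $c_1(Y)$, I would restrict $\varphi$ to the smooth $\bP^1$-bundle locus $W_0 := W \setminus E \to Y_0 := Y \setminus B$, where $\sF|_{Y_0}$ is locally free of rank two. On $W_0$ the standard relative canonical formula for a projectivized rank-two bundle reads
\[
K_{W_0} = -2\xi_\sE + \varphi^*\bigl(c_1(\sF) + K_Y\bigr).
\]
Because $E$ has codimension two in the smooth $W$ and $B$ has codimension three in the smooth $Y$, the Picard restriction maps are injective, so this formula lifts to $W$. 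Comparing with $-K_W = 2\xi_\sE$ and using injectivity of $\varphi^*$ on Picard groups yields $c_1(\sF) = -K_Y$.

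For (2), the plan is a contradiction argument. Let $f\colon\bP^1\to C\subset Y$ be a minimal free rational curve, so $-K_Y\cdot C = l_Y$. Since $B$ has codimension three in $Y$, a dimension count on the evaluation map of the free family of deformations of $C$ shows that a general deformation is disjoint from $B$; I take $C$ in this open locus. Then $\sF|_C$ is locally free of rank two, and since $\xi_\sE$ is ample on $\bP(\sF|_C)\subset W$, the bundle $\sF|_C$ is ample on $\bP^1$. Writing $\sF|_C\simeq \cO(a_1)\oplus\cO(a_2)$ with each $a_i\ge 1$ gives $l_Y = c_1(\sF)\cdot C = a_1+a_2\ge 2$, which already rules out $l_Y = 1$.

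It remains to exclude $l_Y = 2$. In that case $\sF|_C\simeq \cO(1)^{\oplus 2}$ for every generic minimal free $C$ disjoint from $B$. I plan to invoke an analog of Proposition~\ref{prop:RCn-2}(3), applied to the locally free restriction $\sF|_{Y\setminus B}$ of rank $2 = \dim Y - 2$, to conclude that $\sF$ splits as a direct sum of two line bundles on $Y\setminus B$. Reflexivity of $\sF$, combined with the fact that $B$ has codimension $\geq 2$ in $Y$, then propagates the splitting across $B$, so that $\sF\simeq \sL_1\oplus \sL_2$ is locally free on all of $Y$. But then $\varphi\colon\bP(\sF)\to Y$ would be a smooth $\bP^1$-bundle everywhere, contradicting Proposition~\ref{prop:jumpingfib}, which produces a $\varphi$-fiber of dimension $\geq 2$.

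The main obstacle is transferring the splitting argument of Proposition~\ref{prop:RCn-2}(3), originally formulated for Mukai pairs with locally free $\sE$ on a projective Fano manifold, to the reflexive $\sF$ on $Y$; I expect this to be handled either by running the original argument on the quasi-projective $Y\setminus B$ (where $\sF$ is locally free and a dense family of split free rational curves still covers) and extending the splitting by reflexivity across the codimension-three locus $B$, or alternatively by passing to the blow-up $\tY$ and the honest rank-two bundle $\tsF$ there.
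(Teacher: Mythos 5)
Part (1) of your proposal is correct and coincides with the paper's argument: ampleness of $\sF$ is by definition the ampleness of $\xi_\sF=\xi_\sE$, and $c_1(\sF)=c_1(Y)$ follows from the relative canonical bundle formula on $W\setminus E\to Y\setminus B$ together with $\codim_YB\geq2$. The first half of your part (2) is also sound and matches the paper: a general curve of a dominating family of minimal degree avoids the codimension-three locus $B$ (this is \cite[Ch.~II, Prop.~3.7]{Kol96}), so $\sF|_{C_Y}$ is an ample rank-two bundle on $\bP^1$ and $l_Y\geq2$.

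The gap is in your exclusion of $l_Y=2$. You invoke ``an analog of Proposition~\ref{prop:RCn-2}~\ref{prop:RCn-23}'' for the reflexive, non-locally-free sheaf $\sF$, to be justified either by running that argument on the quasi-projective $Y\setminus B$ or by passing to the blow-up $\tY$. Neither route works as stated. Proposition~\ref{prop:RCn-2}~\ref{prop:RCn-23} is a statement about Mukai pairs on a \emph{projective} Fano manifold, and its proof in \cite{Kan17} is Mori-theoretic on the projectivization: it needs properness of the relevant families of rational curves and the extremal-ray structure of the cone of curves of $\bP(\sE)$, none of which localize to the open subset $Y\setminus B$. On $\tY$ there is an honest rank-two bundle, but $\rho_{\tY}=2$ and its first Chern class is not $c_1(\tY)$, so one is again outside the hypotheses of that proposition. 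Hence the crucial implication ``$l_Y=2\Rightarrow\sF$ splits'' is not established, and your intended contradiction with Proposition~\ref{prop:jumpingfib} never gets off the ground. The way to close the gap is to exploit the splitting $\sF|_{C_Y}\simeq\cO(1)\oplus\cO(1)$ on the projective variety $W$ rather than trying to descend a splitting to $Y$: the minimal sections of $\bP(\sF|_{C_Y})\simeq\bP^1\times\bP^1$ over the general minimal curves $C_Y$ form an unsplit covering family of curves $\tC_Y$ on $W$ with $\xi_\sF\cdot\tC_Y=1$; by \cite[Lemma~2.4]{CO06} its class spans an extremal ray of $\NE(W)$, necessarily $R_\pi$ since these curves are not contracted by $\varphi$. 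This forces $\dim\pi(\varphi^{-1}(C_Y))=1$ and hence produces a curve $D$ on $X$ with $\dim\varphi(\pi^{-1}(D))=1$, contradicting Lemma~\ref{lem:BB}. That is the paper's proof; it reaches the contradiction directly from the cone of $W$ and needs no splitting statement for $\sF$.
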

\begin{proof}
 \ref{lem:Mukai1} Recall that $\sF$ is said to be ample if $\xi_{\sF}$ is ample.
 Thus the ampleness of $\sF$ follows from the definition.
 Set $W^0 \coloneqq W \setminus E$ and $Y^0 \coloneqq Y \setminus B$, and let $\varphi^0 \colon W^0 \to Y^0$ be the restriction of the projection.
 Then the canonical bundle formula for projective bundles yields 
 \[
 c_1(W^0 ) = 2 \xi_{\sF|_{Y^0}}+(\varphi^0)^* (c_1(Y^0)-c_1(\sF|_{Y^0})).
 \]
 Since
 \[
 c_1(W^0)=2\xi_{\sE}|_{W^0} = 2 \xi_{\sF}|_{W^0}=2 \xi_{\sF|_{Y^0}},
 \]
 we have $c_1 (\sF|_{Y^0}) =c_1(Y^0)$.
 Since $\codim _Y B \geq 2$, we have $c_1(\sF) = c_1 (Y)$.
 
 \ref{lem:Mukai2} Assume that $l_Y =2$.
Then $Y$ admits a dominating family $M_Y$ of rational curves of anticanonical degree $2$.
Let $g \colon \bP^1 \to C_Y \subset Y$ be a general rational curve in this family $M_Y$.
Then, by \cite[Chapter~II, Proposition~3.7]{Kol96}, the image $C_Y$ does not intersect with $B$.
Thus $g^*\sF$ is a locally free sheaf of rank two on $\bP^1$.
Moreover, by \ref{lem:Mukai1}, the bundle $g^*\sF$ is isomorphic to $\cO(1)\oplus\cO(1)$.
Therefore, by considering the minimal sections of $\bP(g^*\sF) \to \bP^1$, we have an unsplit covering family $\tM_Y$ of rational curves $\tC_Y$ on $W$ such that $\xi_\sF \cdot \tC_Y =1$ (this family is unsplit by the numerical condition $\xi_\sF \cdot \tC_Y =1$).
By \cite[Lemma~2.4]{CO06}, the numerical equivalence class $[\tC_Y]$ spans the extremal ray $R_\pi$.
Thus we have $\dim \pi (\varphi^{-1}(C_Y)) =1$.
This implies $\pi^{-1}(\pi (\varphi^{-1}(C_Y))) = \varphi^{-1}(C_Y)$, and hence $\varphi(\pi^{-1}(\pi (\varphi^{-1}(C_Y)))) = C_Y$.
This contradicts Lemma~\ref{lem:BB}.
\end{proof}

Now we can determine the curves $B_i$.
\begin{lemma}
 Let $(X,\sE)$ be a pair as in Theorem~\ref{thm:quinticDP}.
Then each curve $B_i$ is isomorphic to $\bP^1$.
\end{lemma}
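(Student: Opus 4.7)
The plan is to exhibit a surjective morphism $\bP^1 \twoheadrightarrow B_i$: since $B_i$ is a smooth irreducible curve, such a morphism immediately forces $B_i \simeq \bP^1$.

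I would begin with the observation that for any $\bP^2$-fiber $F$ of $\varphi|_{E_i} \colon E_i \to B_i$, the restriction $\pi|_F \colon F \to X$ is finite. Indeed, $F \simeq \bP^2$ has Picard number one, so $\pi|_F$ is constant or finite; constancy would put the surface $F$ inside a single $1$-dimensional $\pi$-fiber, which is impossible. Consequently $\dim \pi(E_i) \geq 2$; combined with $\dim E_i = 3 < \dim X$, the image $D_i := \pi(E_i)$ has dimension $2$ or $3$.

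The main case is $\dim D_i = 2$. Here, for each $b$ the irreducible surface $\pi(F_b) \subseteq D_i$ has the same dimension as $D_i$, and hence coincides with $D_i$. So for every $x \in D_i$ some $\bP^2$-fiber $F_b$ meets $\pi^{-1}(x)$, and the fiber-dimension formula for $\pi|_{E_i}$ over $D_i$ forces the entire $1$-dimensional $\pi^{-1}(x)$ to lie in $E_i$. A dimension count then gives $E_i = \pi^{-1}(D_i)$. For any $x \in D_i$ the morphism $\varphi|_{\pi^{-1}(x)} \colon \bP^1 \to B_i$ is non-constant---otherwise $\pi^{-1}(x)$ would lie inside a single $\bP^2$-fiber $F_b$, contradicting the finiteness of $\pi|_{F_b}$---so it surjects onto $B_i$, yielding $B_i \simeq \bP^1$.

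It remains to handle the case $\dim D_i = 3$, where $\pi|_{E_i}$ is generically finite onto a divisor $D_i \subseteq X$ (necessarily proportional to $H_X$, since $\rho_X = 1$). My approach here is to pull the situation back along a minimal rational curve $C \subset D_i$: the preimage $\pi^{-1}(C) \simeq \bF_1$ is a Hirzebruch surface (as $\sE|_C \simeq \cO(2) \oplus \cO(1)$), and $C_E := E_i \cap \pi^{-1}(C)$ is a $1$-dimensional multi-section whose image under $\varphi$ lies in $B_i$. Lemma~\ref{lem:BB}, applied to the curve $C$, rules out $\varphi(\pi^{-1}(C))$ having dimension one, and for general $C$ no component of $C_E$ can be contained in a single $\bP^2$-fiber $F_b$ (such containment would force $C \subseteq \pi(F_b)$, a closed condition). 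Thus $\varphi(C_E) = B_i$ and some component $C'_E$ of $C_E$ dominates $B_i$. The task then reduces to showing that $C'_E$ is rational. I expect this to follow by proving that $\pi|_{E_i}$ is in fact birational, so that $C_E$ is an honest section of the ruling $\bF_1 \to C$ and thus isomorphic to $\bP^1$. The main technical obstacle lies precisely in this birationality statement, which I would try to extract from the Bogomolov inequality (Proposition~\ref{prop:Bogomolov}) combined with intersection-theoretic identities derived from the Grothendieck relation~\eqref{eq:Grel}, as these constrain how often a $\pi$-fiber can meet the codimension-two subvariety $E_i$.
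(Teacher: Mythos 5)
Your strategy---dominate $B_i$ by a rational curve---is reasonable in principle, but as written the proof has a genuine gap, and in fact two. First, the step you yourself flag as ``the main technical obstacle'' is precisely the content of the lemma: without knowing that the component $C'_E$ dominating $B_i$ is rational, you have shown nothing (any smooth curve is dominated by \emph{some} curve). You defer this to an unproved birationality of $\pi|_{E_i}$, to be ``extracted'' from the Bogomolov inequality; no such extraction is sketched, and it is not clear it is true --- generic finiteness of $\pi|_{E_i}$ (which the paper does prove later, in the proposition $Y\simeq \bP^4$) gives a multisection of unknown degree, not a section. Second, the construction in the main case $\dim \pi(E_i)=3$ starts from a minimal rational curve $C\subset D_i$. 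The minimal family is a covering family of $X$, but there is no reason any of its members should be \emph{contained} in the particular divisor $D_i=\pi(E_i)$; for a curve $C$ merely meeting $D_i$, the intersection $E_i\cap\pi^{-1}(C)$ is finite for general $C$ and your $C_E$ does not exist. (Your case $\dim \pi(E_i)=2$ is handled correctly, though it is in fact vacuous: if $E_i=\pi^{-1}(D_i)$ for a surface $D_i$, any curve $D\subset D_i$ violates Lemma~\ref{lem:BB}.)

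For comparison, the paper's argument is entirely topological and avoids producing any rational curve on $B_i$: it combines the blow-up and projective-bundle formulas for Betti numbers applied to the square $\tW\to\tY$, $W\to Y$ of Proposition~\ref{prop:Bscroll} together with the $\bP^1$-bundle $W\to X$, obtaining
\[
b_k(X)+b_{k-2}(X)=b_k(Y)+b_{k-2}(Y)+\sum_{i=1}^m b_{k-4}(B_i),
\]
and then Poincar\'e duality at $k=3,5$ forces $\sum_i b_1(B_i)=0$, i.e.\ each $B_i$ has genus zero. If you want to keep your geometric approach, you would need to supply both the existence of a minimal rational curve inside $D_i$ and the rationality of the resulting multisection; the Betti-number computation sidesteps both difficulties.
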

\begin{proof}
It is enough to see that the first Betti number $b_1(B_i)$ is zero for each $i$.

 By the blow-up formula and the projective bundle formula for Betti numbers (see for example \cite[EXPOSE XVIII]{SGA72}), we have
\begin{align}
 b_k(\tY) &= b_k(Y)+\sum_{i=1}^m b_{k-2} (B_i)+\sum_{i=0}^m b_{k-4} (B_i), \label{eq:betti1}\\
  b_k(\tW) &= b_k(W)+\sum_{i=1}^m b_{k-2} (E_i).\label{eq:betti2}
\end{align}
and
\begin{align}
 b_k(\tW) &= b_k(\tY)+ b_{k-2} (\tY),\label{eq:betti3}\\
  b_k(W) &= b_k(X)+b_{k-2} (X),\label{eq:betti4}\\
  b_k(E_i) &= b_k(B_i)+ b_{k-2} (B_i)+b_{k-4} (B_i)\label{eq:betti5}.
\end{align}
By equations~\eqref{eq:betti2}, \eqref{eq:betti4} and \eqref{eq:betti5}, we have
\[
b_k(\tW) = b_k(X)+b_{k-2} (X) + \sum_{i=1}^m (b_{k-2}(B_i)+ b_{k-4} (B_i)+b_{k-6} (B_i)).
\]
Also, by equations~\eqref{eq:betti1} and \eqref{eq:betti3}, we have
\[
b_k(\tW) = b_k(Y)+b_{k-2} (Y) + \sum_{i=1}^m (b_{k-2}(B_i)+ 2 b_{k-4} (B_i)+b_{k-6} (B_i)).
\]
Hence,
\[
b_k(X)+b_{k-2} (X) = b_k(Y)+b_{k-2} (Y) + \sum_{i=1}^m  b_{k-4}(B_i).
\]
By letting $k=3$, $4$ or $5$, we obtain:
\begin{align*}
 b_3(X) &=b_3(Y),\\
  b_4(X)+b_{2} (X) &= b_4(Y)+b_{2} (Y) + \sum_{i=1}^m  b_{0}(B_i),\\
 b_5(X)+b_{3} (X) &= b_5(Y)+b_{3} (Y) + \sum_{i=1}^m  b_{1}(B_i).
\end{align*}
Note that $b_k(X) = b_{8-k}(X)$ and $b_k(Y) = b_{8-k} (Y)$ by the Poincar\'e duality, and that $b_2(X) = b_2 (Y) =1$.
Thus, by solving the above three equations, we have $b_1(B_i)=0$, $b_3(X) =b_3(Y)$ and $b_4(Y) +m =b_4(X)$, and the assertion follows
\end{proof}

In the following, $\sC_{{\blank/\blank}}$ (resp.\ $\sN_{\blank/\blank}$) denotes the conormal (resp.\ normal) bundle.
\begin{lemma}
 Let $(X,\sE)$ be a pair as in Theorem~\ref{thm:quinticDP}.
Then $E_i$ is isomorphic to $\bP(\sN_{B_i/Y}(1))$ over $B_i \simeq \bP^1$, and, via this identification, we have
 \[
 \xi_{\sE}|_{E_i} \simeq \xi _{\sN_{B_i/Y}(1)}.
 \]
\end{lemma}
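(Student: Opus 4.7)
The plan is to exploit the double projective-bundle structure of $\tE_i$ given by the diagram in Proposition~\ref{prop:Bscroll}. Write $p \colon \tE_i \to E_i$ and $q \coloneqq \tvarphi|_{\tE_i} \colon \tE_i \to \tB_i$, and let $r \colon \tB_i \to B_i$ be the natural projection. As exceptional divisors of the blow-ups $\tW \to W$ (codimension $2$) and $\tY \to Y$ (codimension $3$), we have $\tE_i \simeq \bP_{E_i}(\sC_{E_i/W})$ and $\tB_i \simeq \bP_{B_i}(\sC_{B_i/Y})$, with $\cO_{\tW}(\tE_i)|_{\tE_i}$ and $\cO_{\tY}(\tB_i)|_{\tB_i}$ the tautological $\cO(-1)$'s of the respective projective bundles.

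From $\tE_i = \tvarphi^{-1}(\tB_i)$ and the fact that $\tvarphi$ is a $\bP^1$-bundle, $q$ is itself a $\bP^1$-bundle; and the identity $\cO_{\tW}(\tE_i) = \tvarphi^*\cO_{\tY}(\tB_i)$ restricted to $\tE_i$ gives
\[
\cO_{\bP_{E_i}(\sC_{E_i/W})}(-1) \simeq q^* \cO_{\bP_{B_i}(\sC_{B_i/Y})}(-1).
\]
Hence, for each $e \in E_i$ with $\varphi(e) = b$, the restriction of $q$ to the $p$-fiber $p^{-1}(e) \simeq \bP^1$ sends it into $r^{-1}(b) \simeq \bP^2$ in such a way that $\cO_{\bP^2}(1)$ pulls back to $\cO_{\bP^1}(1)$; in other words, $q(p^{-1}(e))$ is a line in the $\bP^2$-fiber $\bP(\sC_{B_i/Y,b})$. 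Since lines in $\bP(\sC_{B_i/Y,b})$ are canonically parametrized by $\bP(\sN_{B_i/Y,b})$, this construction globalizes to a morphism
\[
\alpha \colon E_i \longrightarrow \bP_{B_i}(\sN_{B_i/Y})
\]
of $\bP^2$-bundles over $B_i$. Fiberwise, $\tE_i|_b$ has dimension $3$ and sits inside $E_i|_b \times_{B_i} \tB_i|_b \simeq \bP^2 \times \bP^2$ as the classical bidegree-$(1,1)$ incidence divisor, which makes $\alpha$ fiberwise, hence globally, an isomorphism.

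It remains to identify the tautological divisors. The rank-$3$ bundles $\sN_{B_i/Y}(1)$ and $\sN_{B_i/Y}$ have the same projectivization, so $\xi_{\sN_{B_i/Y}(1)} = \xi_{\sN_{B_i/Y}} + (\varphi|_{E_i})^*H_{B_i}$, where $H_{B_i}$ denotes the class of a point on $B_i \simeq \bP^1$. Both $\xi_\sE|_{E_i}$ and $\xi_{\sN_{B_i/Y}}$ restrict to $\cO_{\bP^2}(1)$ on each $\bP^2$-fiber, so they differ by a pullback $(\varphi|_{E_i})^*\cO_{B_i}(d)$ for some integer $d$, and it suffices to show $d = 1$. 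I would pin down $d$ by pulling $\xi_\sE|_{E_i}$ back along $p$, re-expressing $p^*\xi_\sE$ via the tautological class of the $\bP^1$-bundle $\tvarphi$ together with the exceptional divisor $\tE_i \subset \tW$, and then intersecting with a section of $r$ over $B_i$ to extract the numerical value of $d$.

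The main obstacle lies in this last step: even though the scheme-theoretic identification $E_i \simeq \bP(\sN_{B_i/Y})$ follows cleanly from the incidence picture, correctly tracking the twist $\cO_{B_i}(1)$ through the two blow-ups and the competing projective-bundle conventions is the delicate part of the proof.
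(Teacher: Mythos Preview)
Your identification $E_i \simeq \bP(\sN_{B_i/Y})$ via the incidence correspondence is correct and is exactly how the paper handles the first assertion.

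Where you leave a genuine gap is the determination of the twist $d$. The paper avoids your proposed intersection-number chase entirely and instead computes $-K_{\tE_i}$ in two ways. From $-K_W = 2\xi_\sE$ and the blow-up formula one gets $-K_{\tW} = 2\xi_\sE|_{\tW} - \tE$, so adjunction on $\tE_i \subset \tW$ yields
\[
-K_{\tE_i} = 2\,\xi_\sE|_{\tE_i} + 2\,\xi_{\sC_{E_i/W}}.
\]
On the other hand, viewing $\tE_i$ as the incidence variety inside $\bP(\sN_{B_i/Y}) \times_{B_i} \bP(\sC_{B_i/Y})$ over $B_i \simeq \bP^1$, one computes directly
\[
-K_{\tE_i} = \bigl(2\,\xi_{\sN_{B_i/Y}} + 2\,\xi_{\sC_{B_i/Y}} + 2\,H_{B_i}\bigr)\big|_{\tE_i}.
\]
Your own identity $\xi_{\sC_{E_i/W}} \simeq q^*\xi_{\sC_{B_i/Y}}$ then cancels the conormal terms, giving $\xi_\sE|_{\tE_i} = (\xi_{\sN_{B_i/Y}} + H_{B_i})|_{\tE_i}$ and hence $\xi_\sE|_{E_i} = \xi_{\sN_{B_i/Y}(1)}$ after descending along $p$. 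This canonical-bundle comparison is the missing idea; it replaces the delicate bookkeeping you anticipated with a single adjunction identity, and it uses precisely the relation between the two exceptional divisors that you already established.
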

\begin{proof}
In the following square
 \[
 \xymatrix{
  \tE_i \ar[d]_-f \ar[r]^-{\tvarphi_{\tE_i}}&\tB_i \ar[d]^-g\\
 E_i \ar[r]  & B_i,\\
 }
 \]
each $f$-fiber gives a line in a $g$-fiber, and $E_i$ parametrizes all the lines in $g$-fibers.
Since $\tB_{i}\simeq \bP(\sC_{B_i/Y})$, the parameter space of lines in $g$-fibers is isomorphic to the dual projectivization $\bP(\sN_{B_i/Y})$.
Thus we have the first assertion.

Note that $\tvarphi^*\tB_{i} = \tE_i$.
 Thus
\begin{align*}
(\tvarphi_{\tE_i})^*\xi_{\sC_{B_i/Y}} &\simeq  (\tvarphi_{\tE_i})^*(-\tB_i|_{\tB_i}) \\
  &\simeq -\tE_i|_{\tE_i}\\
  &\simeq \xi_{\sC_{E_i/W}}.
\end{align*}

Since $-K_W = 2 \xi_{\sE}$, we have $-K_{\tW} = 2\xi_{\sE}|_{\tW}-\tE$.
Thus, by adjunction,
\[
-K_{\tE_i} = 2\xi_\sE|_{\tE_i} + 2 \xi_{\sC_{E_i/W}}.
\]
Then, we have 
\[
-K_{\tE_i} = 2\xi_\sE|_{\tE_i} + 2 \xi_{\sC_{B_i/Y}}|_{\tE_i}.
\]

On the other hand, we have
\[
-K_{\tE_i} = (2\xi_{\sN_{B_i/Y}} + 2 \xi_{\sC_{B_i/Y}} + 2H_{B_i})|_{\tE_i},
\]
where $H_{B_i}$ is the ample generator of $\Pic (B_i)$ (note that $B_i \simeq \bP^1$).

Thus
\[
\xi_\sE|_{\tE_i} = (\xi_{\sN_{B_i/Y}} +  H_{B_i})|_{\tE_i}.
\]
Hence
\[
\xi_\sE|_{E_i} =\xi_{\sN_{B_i/Y}} +  H_{B_i}|_{E_i} = \xi_{\sN_{B_i/Y}(1)}.
\]
\end{proof}

As a second application of Bogomolov's inequality (Proposition~\ref{prop:Bogomolov}), we now prove the following lemma, which enables us to control invariants $a$ and $r_X$.

\begin{proposition}\label{prop:Ineq}
 Let $(X,\sE)$ be a pair as in Theorem~\ref{thm:quinticDP}.
Then
\[
a < \frac{3r_X+2r_Y+\sqrt{9r_X^2+4r_Yr_X+4r_Y^2}}{2r_Yr_X}.
\]
In particular, one of the following holds:
\begin{enumerate}
 \item $ar_X =3$ and $r_Y\geq2$,
 \item $a \leq 3$, $r_X=3$ and $r_Y =1$,
 \item $a \leq 4$, $r_X=1$ and $r_Y=1$.
\end{enumerate}
\end{proposition}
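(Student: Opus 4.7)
The plan is to apply the Bogomolov inequality $\varDelta\cdot H_X^2\le0$ (Proposition~\ref{prop:Bogomolov}) to the $3$-dimensional subvariety $E_i=\varphi^{-1}(B_i)\subset W$, the preimage of one of the smooth rational curves $B_i$ along which $\varphi$ jumps. Such a component exists because $\varphi$ is a \emph{special} \Banica scroll and so by Proposition~\ref{prop:jumpingfib} has at least one jumping fibre; we already have the description $E_i\simeq\bP(\sN_{B_i/Y}(1))$ over $B_i\simeq\bP^1$. Since $\rho_X=1$, the push-forward $\pi_*[E_i]$ is a non-negative real multiple of $H_X$, so by the projection formula
\[
\pi^*\varDelta\cdot\pi^*H_X\cdot[E_i]\;=\;\varDelta\cdot H_X\cdot\pi_*[E_i]\;\le\;0.
\]

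The strategy is then to compute this intersection number explicitly and read off a quadratic inequality in $a$. Writing $\eta:=\varphi^*H_Y=a\xi-\pi^*H_X$ and $d_i:=H_Y\cdot B_i>0$, the key inputs are the four degrees $\xi^3\cdot E_i=r_Yd_i+1$, $\xi^2\eta\cdot E_i=d_i$, and $\xi\eta^2\cdot E_i=\eta^3\cdot E_i=0$. The last two come from $\eta|_{E_i}=d_iF_{E_i}$, where $F_{E_i}\simeq\bP^2$ is the class of a fibre of $E_i\to B_i$ (two distinct fibres being disjoint). For $\xi^2\eta\cdot E_i$ one uses $\xi|_{F_{E_i}}=\cO_{\bP^2}(1)$. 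For the top-degree term, adjunction along $B_i\subset Y$ gives $c_1(\sN_{B_i/Y})=r_Yd_i-2$, hence $c_1(\sN_{B_i/Y}(1))=r_Yd_i+1$, and the rank-$3$ Grothendieck relation on $\bP(\sN_{B_i/Y}(1))\to B_i\simeq\bP^1$ (whose higher Chern classes vanish for dimension reasons) reduces to $\xi^3=c_1(\sN_{B_i/Y}(1))\,\xi^2$ on $E_i$. Expanding $\pi^*\varDelta\cdot\pi^*H_X=\bigl((2-ar_X)\xi+r_X\eta\bigr)^2\cdot(a\xi-\eta)$ and substituting, I expect the answer to factor cleanly as
\[
\pi^*\varDelta\cdot\pi^*H_X\cdot[E_i]\;=\;(ar_X-2)\,\Bigl\{\,d_i\bigl(r_Xr_Y a^2-(3r_X+2r_Y)a+2\bigr)+a(ar_X-2)\,\Bigr\}.
\]

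By Lemma~\ref{lem:arX} we have $ar_X\ge3$, so both $(ar_X-2)$ and $a(ar_X-2)$ are strictly positive; combined with the Bogomolov inequality above, the bracket must be $\le0$, and then the strict positivity of $a(ar_X-2)$ together with $d_i>0$ force
\[
r_Xr_Y a^2-(3r_X+2r_Y)a+2\;<\;0,
\]
which is precisely the stated bound after solving the quadratic. The ``in particular'' assertions follow by inspection, using $r_X\in\{1,3\}$ (Proposition~\ref{lem:lX}), $r_Y\ge1$, the integrality of $a$ (Corollary~\ref{cor:length}) and $ar_X\ge3$: when $r_X=3$ and $r_Y\ge2$, or $r_X=1$ and $r_Y\ge2$, the quadratic forces $ar_X=3$ (case~(1)); the remaining pairs $(r_X,r_Y)=(3,1)$ and $(1,1)$ yield cases~(2) and~(3) respectively.

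The main technical obstacle I anticipate is the bookkeeping in the explicit expansion and factorisation of $\pi^*\varDelta\cdot\pi^*H_X\cdot[E_i]$; in particular, the rank-$3$ Grothendieck relation for $\sN_{B_i/Y}(1)$ must be applied carefully, with due attention to the tensor shift $c_1(\sN_{B_i/Y}(1))=c_1(\sN_{B_i/Y})+3$. Once the four intersection numbers on $E_i$ are in hand, the Bogomolov inequality reduces the proposition to elementary algebra.
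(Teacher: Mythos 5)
Your proposal is correct and follows essentially the same route as the paper: apply Bogomolov's inequality to $\pi^*\varDelta\cdot\pi^*H_X\cdot[E_i]$ via the projection formula, compute the intersection numbers on $E_i\simeq\bP(\sN_{B_i/Y}(1))$ using $c_1(\sN_{B_i/Y})=r_Yd_i-2$, and arrive at exactly the paper's factorisation $(ar_X-2)\bigl(d_i(r_Xr_Ya^2-(3r_X+2r_Y)a+2)+a(ar_X-2)\bigr)$, from which $ar_X\geq3$ and $d_i>0$ force the quadratic inequality. The only cosmetic difference is that you phrase the computation in terms of $\eta=\varphi^*H_Y$ and the twisted tautological class $\xi_{\sN_{B_i/Y}(1)}$, whereas the paper substitutes $\varphi^*H_Y|_{E_i}=d_iH_{B_i}|_{E_i}$ and works with the untwisted $\xi_{\sN_{B_i/Y}}$; your intersection numbers and final case analysis are all verified correct.
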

\begin{proof}

 Since $\pi_{*}(E_i)$ is an effective divisor (possibly zero), we have
 \[
0 \geq E_i\cdot\pi^*H_X\cdot\pi^*\varDelta
 \]
 by the Bogomolov inequality (Proposition~\ref{prop:Bogomolov}) and the projection formula.
 This inequality yields
 
\begin{align*}
 0 &\geq E_i\cdot\pi^*H_X\cdot\pi^*\varDelta\\
 &= E_i\cdot\pi^*H_X\cdot(2\xi_\sE-r_X\pi^*H_X)^2\\
&= E_i\cdot(a\xi_\sE-\varphi^*H_Y)\cdot((ar_X-2)\xi_\sE-r_X\varphi^*H_Y)^2\\
&= (a\xi_\sE-\varphi^*H_Y)|_{E_i}\cdot((ar_X-2)\xi_\sE-r_X\varphi^*H_Y)^2|_{E_i}\\
&= (a\xi_{\sN_{B_i/Y}}+(a-d_i)H_{B_i}|_{E_i})\cdot((ar_X-2)\xi_{\sN_{B_i/Y}}+(ar_X-d_ir_X-2)H_{B_i}|_{E_i})^2.
\end{align*}

Note that $c_1(\sN_{B_i/Y}) = d_i r_Y -2$, and hence $\xi_{\sN_{B_i/Y}}^3 =d_i r_Y-2$.
Thus, by a straightforward calculation, we have
\begin{align*}
0 &\geq(r_Xa-2)\left((d_ir_Xr_Y+r_X)a^2-(3d_ir_X+2d_ir_Y+2)a+2d_i \right)\\
&=(r_Xa-2)\left(d_i\left(r_Xr_Ya^2-(3r_X+2r_Y)a+2\right) +a\left(r_Xa-2\right) \right).
\end{align*}
Since $ar_X \geq 3$, $a>0$ and $d_i>0$, the above inequality yields the following quadratic inequality for $a$:
\[
0 > r_Xr_Ya^2-(3r_X+2r_Y)a+2.
\]
This gives the inequality as claimed.

Note that we have only finite possibilities for $r_X$ and $r_Y$, and that $ar_X \geq3$.
The rest of the assertion follows from case-by-case analysis.
\end{proof}

\begin{proposition}[$ar_X=3$]\label{prop:comp}
 Let $(X,\sE)$ be a pair as in Theorem~\ref{thm:quinticDP}.
Then $ar_X =3$, or equivalently $\tR=R_\varphi$.
\end{proposition}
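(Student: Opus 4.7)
The plan is to assume $ar_X > 3$ and derive a contradiction by combining Proposition~\ref{prop:Ineq} with the length bound $l_Y \geq 3$. By that proposition, the only triples $(r_X, r_Y, a)$ with $ar_X > 3$ are $(3, 1, 2)$, $(3, 1, 3)$, and $(1, 1, 4)$. In each of them, $\varphi^*H_Y \cdot \tC = a - 3/r_X > 0$, so a minimal lift $\tC$ is not contracted by $\varphi$, and its image $\varphi(\tC)$ is a rational curve on $Y$.

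The first key step is to show that the minimal lifts sweep out all of $W$, so that the curves $\varphi(\tC)$ form a dominating family on $Y$. Since $\dim U = 5 = \dim W$ and $\pi \circ \te \colon U \to X$ is already surjective, the only obstruction to the surjectivity of $\te$ is that $\te(U)$ could be a divisor $D \subset W$ mapping finitely onto $X$. In the generic single-sheeted situation $D = \sigma(X)$ is a section of $\pi$, corresponding to a line-bundle quotient $\sE \twoheadrightarrow \sL$ with kernel $\cO(kH_X)$; computing $[\sigma(X)] \cdot \tC$ both directly as $(\xi_\sE - k\pi^*H_X) \cdot \tC = 1 - 3k/r_X$ and via the relative Euler sequence as $\deg \sN_{\sigma(X)/W}|_{\tC} = (r_X - 2k)\cdot(3/r_X)$ forces $k = 2r_X/3$. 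This is non-integral for $r_X = 1$, and for $r_X = 3$ it gives $k = 2$, in violation of the stability bound $k < r_X/2$ established in Section~\ref{sect:stable}. Hence $\te(U) = W$.

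With the covering in hand, a general $\varphi(\tC)$ is a free rational curve on $Y$. Writing $d = \deg(\tC \to \varphi(\tC)) \geq 1$, the projection formula gives $-K_Y \cdot \varphi(\tC) = r_Y(a - 3/r_X)/d$, so $l_Y \geq 3$ forces $r_Y(a - 3/r_X) \geq 3$. For the three exceptional triples listed above, the left-hand side equals $1$, $2$, and $1$ respectively, each strictly less than $3$: contradiction. Therefore $ar_X = 3$, and by Lemma~\ref{lem:arX} this is equivalent to $\tR = R_\varphi$.

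The main obstacle is the covering step: stability of $\sE$ is what rules out the degenerate configuration in which every minimal lift is confined to a single section of $\pi$. A potential additional subtlety is the multi-section case (where $D$ has degree $>1$ over $X$), which would need to be handled either by an irreducibility argument for $M_x$ (so that the ``special point'' map $M_x \to \pi^{-1}(x)$ is constant on a connected component, forcing degree $1$) or by a refined version of the intersection-theoretic computation above.
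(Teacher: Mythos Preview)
Your overall strategy matches the paper's: reduce via Proposition~\ref{prop:Ineq} to the three triples with $r_Y = 1$, then play the degree $\varphi^*(-K_Y)\cdot\tC = a - 3/r_X \leq 2$ against $l_Y \geq 3$. The gap is exactly where you flag it: you handle only the case where $D \coloneqq \te(U)$ is a \emph{section} of $\pi$. In that case your two computations of $[\sigma(X)]\cdot\tC$ are correct and force $k = 2r_X/3$, contradicting integrality ($r_X=1$) or stability ($r_X=3$). But for a genuine multi-section the identification $\sN_{D/W} \simeq T_\pi|_D$ fails, so there is no second equation to pin down $[D]$, and connectedness of $M_x$ is nowhere available in the paper. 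As written, the argument is incomplete.

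The paper closes this gap by bringing in the other contraction. Since the $\varphi$-images of the lifts have anticanonical degree $\leq 2 < l_Y$, they cannot dominate $Y$; hence $\varphi(D) \subsetneq Y$, and a dimension count forces $D = \varphi^{-1}(\varphi(D))$. As $\rho_Y = 1$, the divisor $\varphi(D)$ is ample and so meets $B_i$; thus $D$ contains a two-dimensional jumping fiber $F$. One then invokes standard results on unsplit families (\cite[Lemma~5.4]{ACO04}, \cite[Lemma~3.2]{Occ06}) to show that $V \coloneqq \te(\pi^{-1}(\pi(\te^{-1}(F))))$ has dimension $\geq 4$ with $\NE(V,W) \subset \langle \tR, R_\varphi \rangle$. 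It is this cone condition --- which $D$ itself need not satisfy --- that forces $V$ to be a section (Lemma~\ref{lem:section}, via \cite[Lemma~1.18]{Kan17}), whereupon the splitting argument (essentially yours) finishes the job. So your endgame is right, but it must be applied to $V$, not to $D$.
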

\begin{proof}
 Assume to the contrary $ar_X >3$, or equivalently, $\tR \neq R_\varphi$.
 Then, by Proposition~\ref{prop:Ineq}, one of the following holds:
\begin{enumerate}
 \item $a=3$, $r_X=3$ and $r_Y=1$,
 \item $a=2$, $r_X=3$ and $r_Y=1$,
 \item $a=4$, $r_X=1$ and $r_Y=1$.
\end{enumerate}
In any case, we have $\varphi^*(-K_Y)\cdot \tC = a -\frac{3}{r_X}\leq 2$.
Thus the $\varphi$-image of $\tC$ gives a rational curve on $Y$ whose anticanonical degree is at most two.
Since $l_Y =3$, the family of these rational curves is not dominant. 
Hence $\varphi(\te(U))\neq Y$.
This also implies $\te(U) \neq W$.

Since $\pi(\te(U)) =X$, we have $\dim \te(U) \geq 4$.
Therefore, $D \coloneqq \te(U)$ is a divisor on $W$.
Then $\varphi(D)$ is also a divisor on $Y$, and $\varphi^*(\varphi(D)) = D$.
In this case, since $\rho _Y=1$, the divisor $\varphi(D)$ is ample, and hence $\varphi(D)\cap B_i \neq \emptyset $.
Thus $D$ contains a two dimensional fiber $F$ of $\varphi$.
Then, by \cite[Lemma~5.4]{ACO04}, \cite[Lemma~3.2 and Remark~3.3]{Occ06} (see also \cite[Corollary~2.2 and Remark~2.4]{CO06}),
\[
\dim \te(\pi^{-1}(\pi(\te^{-1}(F)))) \geq 4
\]
and
\[
\NE(\te(\pi^{-1}(\pi(\te^{-1}(F)))),W) \subset \langle \tR,R_{\varphi}\rangle.
\]
This contradicts the following lemma \cite[Lemma~1.18]{Kan17}, which is a generalization of \cite[Claim 4.1.1]{PSW92b}.
Note that the original assumption in \cite[Lemma~1.18]{Kan17} is a little bit stronger than that in our case, while the same proof does work. 
\end{proof}

\begin{lemma}[{\cite[Lemma~1.18]{Kan17}}]\label{lem:section}
 Let $(X,\sE)$ be a Mukai pair as in \ref{set}.
 Then there is no subvariety $V$ in $W$ with $\dim V \geq 4$ such that
 $\NE (V,W) \subset \langle \tR,R_{\varphi} \rangle$.
\end{lemma}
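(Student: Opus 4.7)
The plan is to argue by contradiction: assume a closed irreducible $V \subseteq W$ with $\dim V \geq 4$ and $\NE(V,W) \subset \langle \tR, R_\varphi\rangle$ exists. First, I would rule out $\dim V = 5$, i.e.\ $V = W$. The nef class $\pi^*H_X$ vanishes on $R_\pi$ but is strictly positive on both $\tR$ (since $\pi^*H_X\cdot\tC = 3/r_X > 0$) and on $R_\varphi$ (as a curve contracted by $\varphi$ cannot also be $\pi$-contracted, given that these are distinct extremal rays of the two-dimensional cone $\NE(W)$). Hence $\pi^*H_X$ is strictly positive on every nonzero class of $\langle \tR, R_\varphi\rangle$, forcing $R_\pi \not\subset \langle \tR, R_\varphi\rangle$. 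In particular $V \neq W$, so $V$ must be a prime divisor on $W$.

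Next I would show that $\pi|_V\colon V \to X$ is finite and surjective. A positive-dimensional fiber of $\pi|_V$ would contain a curve lying in a $\pi$-fiber, producing a class in $R_\pi\cap\NE(V,W)\subseteq R_\pi\cap\langle\tR,R_\varphi\rangle=\{0\}$, a contradiction. Since $\dim V = \dim X$, $\pi|_V$ is then surjective of some positive degree $d$. Writing $V \equiv \alpha\pi^*H_X+\beta\xi_\sE$ in $N^1(W)$, intersection with a $\pi$-fiber yields $\beta = d > 0$.

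The decisive step invokes the unsplit covering family $\tM$ of minimal lifts (Lemma~\ref{lem:lift} and Proposition~\ref{prop:RCn-2}~\ref{prop:RCn-21}). Using the standard deformation-locus technology for unsplit families of rational curves (as in \cite[Lemma~5.4]{ACO04}, \cite[Corollary~2.2]{CO06}, \cite[Chapter~IV]{Kol96}), the locus $\Locus(\tM)_V$ swept by members of $\tM$ meeting $V$ satisfies $\NE(\Locus(\tM)_V, W) \subset \NE(V,W) + \tR \subset \langle \tR, R_\varphi\rangle$. Iterating this sweep---combined with the chain-connectedness of $X$ by minimal rational curves (Proposition~\ref{prop:RCn-2}~\ref{prop:RCn-22}) and the surjectivity of $\pi|_V$---forces the iterated locus to reach dimension $5$, that is, to equal $W$. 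The cone inclusion then propagates to $\NE(W)\subset\langle\tR,R_\varphi\rangle$, giving $R_\pi\subset\langle\tR,R_\varphi\rangle$ and contradicting the first step.

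The hard part will be this last step: controlling the iterated sweep so that each iteration strictly increases dimension without violating the cone inclusion, until the iterated locus saturates at $W$. This is the rank-two analogue of \cite[Claim~4.1.1]{PSW92b}, carried out for arbitrary Mukai pairs in \cite[Lemma~1.18]{Kan17}; the crucial input is the unsplitness of $\tM$---no member can break into numerically smaller curves---together with the explicit splitting $f^*\sE \simeq \cO(2)\oplus\cO(1)$ along a minimal rational curve $f$, which pins down the class $[\tC]$ and makes the intersection-theoretic bookkeeping effective.
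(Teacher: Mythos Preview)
Your first two steps are sound and match the paper. The third step, however, does not work: the family $\tM$ of minimal lifts is unsplit, but it is \emph{not} a covering family on $W$. Each minimal rational curve on $X$ has a unique minimal lift, so $\Locus(\tM)=\te(U)$; and in the only nontrivial case $\tR\neq R_\varphi$ (when $\tR=R_\varphi$ the cone $\langle\tR,R_\varphi\rangle$ is a single ray and $V$ would lie in a $\varphi$-fiber of dimension $\le 2$ by Proposition~\ref{prop:bound}), the locus $\te(U)$ is a proper divisor in $W$ --- this is exactly what is established in the proof of Proposition~\ref{prop:comp}. Hence every iterate of your sweep stays inside $\te(U)$ and never reaches dimension $5$. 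Chain-connectedness of $X$ by minimal rational curves does not rescue this: minimal lifts are sections over their underlying curves, so two distinct points in the same $\pi$-fiber can never be joined by a chain of minimal lifts. The ``hard part'' you flag is therefore not merely hard but impossible along this route.

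The paper (following \cite[Lemma~1.18]{Kan17}) does something different: it upgrades ``$\pi|_V$ finite'' to ``$\pi|_V$ is an isomorphism'', i.e.\ $V$ is a \emph{section} of $\pi$. The mechanism is local over a general minimal rational curve $C$: inside $\pi^{-1}(C)\simeq\bF_1$ the curve $V\cap\pi^{-1}(C)$ pushes forward to a class $d\,[\tC]+e\,[\text{$\pi$-fiber}]$ in $N_1(W)$ with $e\ge 0$, and the cone hypothesis forces $e=0$; on $\bF_1$ the only effective curve of class $d\sigma$ is $d$ copies of the minimal section, so set-theoretically $V\cap\pi^{-1}(C)=\tC$ and $\pi|_V$ has degree one. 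The section $V$ then corresponds to a line-bundle quotient $\sE\twoheadrightarrow L$, and the resulting short exact sequence on $X$ (written in the paper as $0\to\cO(2)\to\sE\to\cO(1)\to 0$) splits by Kodaira vanishing, contradicting the indecomposability of $\sE$. What your plan is missing is precisely this passage from ``finite of some degree $d$'' to ``$d=1$''.
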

\begin{proof}
Assume to the contrary that there is a closed subvariety $V$ in $W$ with $\dim V \geq 4$ and
 $\NE (V,W) \subset \langle \tR,R_{\varphi} \rangle$.
By the same proof of \cite[Lemma~1.18]{Kan17}, we can show that the subvariety $V$ gives a section of $\pi$ corresponding to the exact sequence
\[
0 \to \cO(2) \to \sE \to \cO(1) \to 0.
\]
This sequence, however, splits, and we get a contradiction.
\end{proof}

Therefore, by Proposition~\ref{prop:comp}, the minimal lifts $\tC$ are contracted by the second contraction $\varphi$.
Conversely, each line in a $\varphi$-fiber  gives a minimal rational curve on $X$.
Using this fact, we can now prove that the image $Y$ is $\bP^4$.
\begin{proposition}
 Let $(X,\sE)$ be a pair as in Theorem~\ref{thm:quinticDP}.
 Then $Y \simeq \bP^4$.
\end{proposition}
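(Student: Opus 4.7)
Plan: the goal is to show that $Y$ is a Fano fourfold of index $5$ with Picard number one, whence $Y \simeq \bP^4$ by Kobayashi--Ochiai. From Proposition~\ref{prop:comp} we have $ar_X = 3$ and from Proposition~\ref{lem:lX} $r_X \in \{1, 3\}$, so either $(a, r_X) = (1, 3)$ or $(3, 1)$; substituting $ar_X = 3$ into Proposition~\ref{prop:Ineq} simplifies to $r_Y < 7r_X/3$, giving $r_Y \leq 2$ in case $r_X = 1$.

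Since $\varphi^*H_Y = a\xi_\sE - \pi^*H_X$, each $\pi$-fiber $\pi^{-1}(x) \simeq \bP^1$ maps to a rational curve on $Y$ of $H_Y$-degree at most $a$. Varying $x$, these curves cover $Y$, giving an unsplit covering family on $Y$ of anticanonical degree at most $ar_Y$, hence $l_Y \leq ar_Y$. Together with the divisibility $r_Y \mid l_Y$, the bound $l_Y \geq 3$ from the earlier Lemma, and Theorem~\ref{thm:PQ}, this eliminates most subcases: the case $(a, r_X) = (3, 1)$ reduces to $r_Y = 1$ (since $r_Y = 2$ forces $l_Y \geq 4 = \dim Y$ and hence $Y \in \{\bP^4, \bQ^4\}$ by Theorem~\ref{thm:PQ}, contradicting $r_Y = 2$), whose exclusion requires an additional argument; while for $(a, r_X) = (1, 3)$ the covering curves are lines and $l_Y = r_Y \in \{3, 4, 5\}$.

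The crux is ruling out $r_Y \in \{3, 4\}$ in the case $(a, r_X) = (1, 3)$, together with the residual case $(a, r_X, r_Y) = (3, 1, 1)$. The pair $(Y, \sF)$ satisfies $c_1(\sF) = c_1(Y)$ with $\sF$ ample (earlier Lemma), making it a ``generalized rank-two Mukai pair'' of dimension $4$; the \emph{special} \Banica scroll structure means $B \neq \emptyset$ and $\sF$ is not locally free. For $r_Y = 4$, Theorem~\ref{thm:PQ} gives $Y \simeq \bQ^4$, and analysing rank-two reflexive sheaves on $\bQ^4$ with $c_1 = 4H_Y$ admitting the prescribed $\bP^2$-jumping over a disjoint union of $\bP^1$'s should produce a contradiction. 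For $r_Y = 3$, $Y$ is a del Pezzo fourfold $V_d$ ($d \in \{1, \dots, 5\}$); the projective-bundle relation $\xi_\sE^2 = 3\pi^*H_X \cdot \xi_\sE - \pi^*c_2(\sE)$ combined with $(\xi_\sE - \pi^*H_X)^5 = \varphi^*(H_Y^5) = 0$ gives, after push-forward via $\pi$, the Chern-class identity $H_Y^4 = 5H_X^4 - 2\,c_2(\sE)\cdot H_X^2$; then Bogomolov's inequality $c_2(\sE)\cdot H_X^2 \geq \tfrac{9}{4}H_X^4$ (Proposition~\ref{prop:Bogomolov}) and the classical bound $H_X^4 \leq 5$ (since $X$ is itself a del Pezzo fourfold with $\rho_X = 1$) yield $d = H_Y^4 \leq 5/2$, so $d \in \{1, 2\}$; these residual subcases are eliminated by a closer analysis of the normal bundles $\sN_{B_i/Y}$ via $E_i = \bP(\sN_{B_i/Y}(1))$.

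The main obstacle is this final elimination of small indices: the numerical constraints narrow the possibilities but do not suffice alone, and one must exploit the fine geometric structure of the \Banica scroll's jumping locus together with the interplay of Fano structures on $W$, $X$, and $Y$.
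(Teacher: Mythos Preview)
Your plan is a genuine strategy, and the numerical steps you do carry out are correct (in particular, the identity $H_Y^4 = 5H_X^4 - 2\,c_2(\sE)\cdot H_X^2$ for $(a,r_X)=(1,3)$ follows by combining $(\varphi^*H_Y)^5=0$ with $(\varphi^*H_Y)^4\cdot\xi_\sE=H_Y^4$, and the consequence $r_Y<7r_X/3$ of Proposition~\ref{prop:Ineq} is valid). However, as you explicitly acknowledge, the argument is incomplete: the cases $(a,r_X,r_Y)=(1,3,4)$ with $Y\simeq\bQ^4$, $(1,3,3)$ with $H_Y^4\in\{1,2\}$, and $(3,1,1)$ are left unresolved, and the ``closer analysis of the normal bundles $\sN_{B_i/Y}$'' you invoke is not carried out. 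I do not see a clean way to close these residual cases by purely numerical means; the information available (Bogomolov, the Chern-class relations from $\varphi^*H_Y^5=0$, the del Pezzo classification) does not pin down $r_Y$ uniquely.

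The paper's proof avoids this case analysis entirely by a direct geometric argument. It never computes $r_Y$. Instead, it uses the \Banica scroll structure: there is a jumping component $E_i\to B_i\simeq\bP^1$ with $\bP^2$-fibers, and restricting the relative Euler sequence of $\pi$ to any such fiber $F\simeq\bP^2$ gives $\sE|_F\simeq\cO_{\bP^2}(2)\oplus\cO_{\bP^2}(1)$. Consequently, the composite $\theta_{E_i}\colon\bP(\sE|_{E_i})\to W\to Y$ factors through a $\bP^3$-bundle $\bP(\sG_i)$ over $B_i$ (on each $\bP^2$-fiber, $\bP(\cO(2)\oplus\cO(1))$ contracts its minimal section to produce $\bP^3$). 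A short argument using $ar_X=3$ and Lemma~\ref{lem:BB} shows $\theta_{E_i}$, and hence $\bP(\sG_i)\to Y$, is surjective. Since every bundle on $\bP^1$ splits, $\bP(\sG_i)$ is a toric variety, and the theorem of Occhetta--Wi\'sniewski \cite{OW02} (a smooth projective variety with $\rho=1$ dominated by a complete toric variety is a projective space) yields $Y\simeq\bP^4$ at once. The key idea you are missing is this toric-domination argument; it replaces your stalled elimination of small $r_Y$ by a single structural input.
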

\begin{proof}
By taking the base change of $\pi$ by the morphism $\pi|_{E_i} \colon E_i \to X$, we have the following diagram:
\[
\xymatrix{
\bar {E_i} \ar@{^{(}-_>}[d]^-{\iota} \ar[rr]&&B_i \ar@{^{(}-_>}[d]\\
  \bP(\sE |_{E_i}) \ar[d]_-{\pi_{E_i}}  \ar[r] \ar@(ur,ul)[rr]^-{\theta _{E_i}}  & W \ar[d]_-{\pi} \ar[r]_-{{\varphi}} & Y         \\
E_i \ar[r]^-{\pi|_{E_i}} & X,                                        & \\
}
\]
where $\bar {E_i}$ is a section of $\pi_{E_i}$ corresponding to the original inclusion $E_i \subset W$.

First, we will prove that $\theta_{E_i}$ is surjective.
It is enough to see that $\pi|_{E_i}$ is generically finite.
Otherwise, $\dim \pi(E_i) =2$ and thus $\pi^{-1}(\pi(E_i))=E_i$.
Fix a line $\ell$ in a $\varphi$-fiber contained in $E_i$.
Then $\varphi(\pi^{-1}(\pi(\ell)))=B_i$ since $\pi^{-1}(\pi(E_i))=E_i$.
This is impossible since ${(\pi|_{\ell})}^*\sE \simeq \cO(2)\oplus\cO(1)$.
Thus $\theta_{E_i}$ is surjective.

Second, we will prove that, for each jumping fiber $F \simeq \bP^2$ of $\varphi$, we have $\sE|_F \simeq \cO(2)\oplus \cO(1)$.
Take a jumping fiber $F\simeq \bP^2$.
Then, by restricting the Euler sequence for $\pi$, we have the following exact sequence:
\[
0 \to \cO(2) \to \pi^*\sE|_F \to \cO(1) \to 0.
\]
The assertion is now clear.

Finally, we will check that $Y \simeq \bP^4$.
Set $q \coloneqq \pi_{E_i} \circ \varphi|_{E_i} \colon  \bP(\sE |_{E_i}) \to B_i$.
Now each $q$-fiber is isomorphic to $\bP_{\bP^2}(\cO(2)\oplus\cO(1))$, and the minimal section of this projectivization $\bP_{\bP^2}(\cO(2)\oplus\cO(1))$ is contracted to a point by $\theta_{E_i}$.
Thus the map $\theta_{E_i}$ factors through a $\bP^3$-bundle $\bP(\sG_i)$ over $B_i$:
\[
\xymatrix{
\bP(\sE|_{E_i}) \ar[d]^-q \ar[r] \ar@(ur,ul)[rr]^-{\theta _{E_i}} & \bP(\sG_i) \ar[dl] \ar[r] & Y \\
B_i.&&
}
\]

Since every vector bundle on $B_i$ splits into a direct sum of line bundles, every projective bundle over $B_i$ is a toric variety.
Thus, by \cite{OW02}, $Y$ is isomorphic to $\bP^4$.
\end{proof}

As $Y \simeq \bP^4$, we now can calculate several invariants of $(X,\sE)$ by relating to those of $Y$.
Set
\begin{align*}
 x&\coloneqq H_X^4,\\
 y&\coloneqq c_2(\sE)\cdot H_X^2,\\
 z&\coloneqq c_2(\sE)^2,\\
 u&\coloneqq c_2(X)\cdot H_X^2,\\
 v&\coloneqq c_2(X)\cdot c_2(\sE).
\end{align*}
\begin{proposition}\label{prop:RR}
 Let $(X,\sE)$ be a pair as in Theorem~\ref{thm:quinticDP}.
Then $a=1$ and $r_X = 3$.
Moreover, we have
\begin{align*}
 y&= \frac{5x-1}{2},\\
 z&= \frac{13x-7}{2},\\
 u&= 2x+12,\\
 v&= \frac{13x+41}{2}.
\end{align*}

\end{proposition}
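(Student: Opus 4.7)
By Proposition~\ref{prop:comp} we have $ar_X = 3$, and Lemma~\ref{lem:lX} gives $r_X \in \{1, 3\}$, so $(a, r_X) \in \{(1, 3), (3, 1)\}$. Since the preceding proposition shows $Y \simeq \bP^4$, the ample generator $H_Y$ of $\Pic(Y)$ satisfies $\varphi^*H_Y = a\xi_\sE - \pi^*H_X$ (the right-hand side being primitive in $\Pic(W) = \bZ\xi_\sE \oplus \bZ\pi^*H_X$), and the Mukai-pair-like property of $(Y, \sF)$ yields $c_1(\sF) = c_1(Y) = 5 H_Y$.

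The main step is to compute the three intersection numbers $(\varphi^*H_Y)^i \cdot \xi_\sE^{5-i}$ for $i = 3, 4, 5$ in two ways. On the $Y$-side, the projection formula together with $\varphi_*\xi_\sE = [Y]$ (a general $\varphi$-fiber is $\bP^1$ on which $\xi_\sE$ has degree $1$) and $\varphi_*\xi_\sE^2 = c_1(\sF) = 5 H_Y$ (the standard projective bundle formula on $Y \setminus B$, extended uniquely since $\codim_Y B \geq 2$) yields the values $5$, $1$, and $0$, respectively. On the $X$-side, push-forward via $\pi_*$ combined with Grothendieck's relation $\xi_\sE^2 = r_X \pi^*H_X \cdot \xi_\sE - \pi^*c_2(\sE)$ rewrites each intersection number as a linear polynomial in $(x, y, z)$ whose coefficients depend on $(a, r_X)$. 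For $(a, r_X) = (3, 1)$ the resulting $3 \times 3$ linear system forces $y = 62/9$, contradicting $y = c_2(\sE) \cdot H_X^2 \in \bZ$, and rules out this case. For $(a, r_X) = (1, 3)$ the first two equations yield $y = (5x-1)/2$ and $z = (13x-7)/2$, as claimed.

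For the formulas $u = 2x + 12$ and $v = (13x + 41)/2$, apply Hirzebruch-Riemann-Roch. Since $X$ is Fano of index $3$ and $\sE$ is ample with $c_1(\sE) = -K_X$, Kodaira and Le Potier type vanishings give $\chi(\cO_X) = 1$ and allow $\chi(X, \sE)$ to be identified with an explicit integer (most cleanly via $\chi(X, \sE) = \chi(W, \cO(\xi_\sE)) = \chi(\bP^4, \sF)$, using the \Banica-scroll structure). Expanding both Euler characteristics as Chern-number integrals via the Todd class and Chern character, substituting the derived formulas for $y$ and $z$, and eliminating the auxiliary Chern invariants of $X$ (such as $c_3(X) \cdot H_X$, $c_2(X)^2$, and $c_4(X)$) by further HRR computations on $X$ or $W$ then delivers the stated formulas. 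The main obstacle is this elimination step, which requires careful Chern-class bookkeeping and may be most conveniently organized by working on the blowup diagram of Proposition~\ref{prop:Bscroll}, where $\tvarphi \colon \tW \to \tY$ becomes a genuine $\bP^1$-bundle and the standard projective-bundle relations apply.
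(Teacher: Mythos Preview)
Your first step is correct and is a pleasant variant of the paper's argument. Your equations for $i=5$ and $i=4$ are exactly the top two Hirzebruch--Riemann--Roch coefficients the paper uses (its $f_1=0$ and $f_2=1/24$), just derived via the projection formula instead of expanding $\td(W)$. Your third equation $(\varphi^*H_Y)^3\cdot\xi_\sE^2=5$, coming from $\varphi_*\xi_\sE^2=c_1(\sF)=5H_Y$, is \emph{not} one of the paper's equations; it is an extra relation, and it is legitimate (intersect with a general line in $\bP^4$, where $\varphi$ is a genuine $\bP^1$-bundle). For $(a,r_X)=(3,1)$ your three equations do force $y=62/9$, and for $(a,r_X)=(1,3)$ the first two give $y=(5x-1)/2$, $z=(13x-7)/2$ exactly as stated. (In fact your third equation then forces $x=5$ as a bonus, which the paper only obtains later.)

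Your second step, however, is not a proof. The specific route you name, identifying $\chi(X,\sE)$ with $\chi(\bP^4,\sF)$, gives nothing: both sides equal $\chi(W,\cO(\xi_\sE))$ tautologically, and you have no independent way to evaluate $\chi(\bP^4,\sF)$ since the higher Chern classes of the non-locally-free sheaf $\sF$ are unknown. Your fallback --- ``further HRR computations on $X$ or $W$'' and ``working on the blowup diagram'' --- is not a plan; the blowup introduces new unknowns (the number of curves $B_i$, their degrees, their normal bundles), and a generic HRR computation on $X$ brings in $c_3(X)\cdot H_X$, $c_2(X)^2$, $c_4(X)$, exactly the ``auxiliary invariants'' you admit you would need to eliminate but do not say how.

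The paper sidesteps all of this with one clean idea you are missing: compute $\chi(W,m\varphi^*H_Y)$ as a polynomial in $m$. Since $\varphi_*\cO_W=\cO_Y$ and $R^i\varphi_*\cO_W=0$, this equals $\chi(\bP^4,\cO(m))=\binom{m+4}{4}$, a known polynomial, so matching coefficients gives five equations. The point is that $\td(W)=\pi^*\td(X)\cdot\td(T_\pi)$ with $T_\pi$ a line bundle, so only $\td_j(X)$ for $j\le 4$ enter; using $\td_3(X)=c_1c_2/24$ and $\td_4(X)=[\pt]$ (from $\chi(\cO_X)=1$), the higher Chern classes of $X$ never appear, and the five equations involve precisely $x,y,z,u,v$. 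Solving yields $u=2x+12$ and $v=(13x+41)/2$. (If you want a shortcut for $u$ alone, note that $\chi(\cO_X(-1))=0$ by Kodaira vanishing, and the HRR polynomial for $\chi(\cO_X(m))$ involves only $x$ and $u$ once $\td_4(X)=[\pt]$ is used; setting $m=-1$ gives $u=2x+12$ in one line.)
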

\begin{proof}
By the the Hirzebruch-Riemann-Roch theorem,
\begin{equation}\label{eq:RR}
 \chi(m(a\xi_\sE-\pi^*H_X))  = [\ch(m(a\xi_\sE-\pi^*H_X))\cdot\td(W)]_{\dim W}.
\end{equation}

Since $\chi(\cO_X) =1$, the fourth Todd class $\td_4(X) $ is the class of a point $[\pt]$.
Thus
\[
\td(X) = 1+ \frac{1}{2}r_X H_X + \frac{1}{12}(r_X^2H_X^2+c_2(X))+\frac{1}{24}r_XH_Xc_2(X)+[\pt].
\]
Also,
\[
\td(T_\pi) = 1+ \frac{1}{2}(-K_\pi) + \frac{1}{12}(-K_\pi)^2-\frac{1}{720}(-K_\pi)^4.
\]
Thus
\begin{align*}
 \td(W) &= \pi^*\td(X)\cdot\td(T_\pi)\\
 &= \pi^*\left( 1+ \frac{1}{2}r_X H_X + \frac{1}{12}(r_X^2H_X^2+c_2(X))+\frac{1}{24}r_XH_Xc_2(X)+[\pt]\right)\\
 &\qquad\cdot\left(1+ \frac{1}{2}(-K_\pi) + \frac{1}{12}(-K_\pi)^2-\frac{1}{720}(-K_\pi)^4\right).
 \end{align*}
 Noting $-K_\pi = 2 \xi-r_X\pi^*H_X$ and the grothendieck relation \[
 \xi_\sE^2=\xi_\sE\cdot (r_X\pi^*H_X) - \pi^*c_2(\sE),
\]
we can calculate the right hand side of equation~\eqref{eq:RR}:
\begin{equation}
 \chi(m(a\xi_\sE-\pi^*H_X))  = f_1m^5+f_2m^4+f_3m^3+f_4m^2+f_5m+1,
 \end{equation}
 where
 
\begin{align*}
 f_1&= 
 \frac{a^5 r^4-5a^4 r^3+10a^3 r^2-10a^2 r+5a}{120}x
-\frac{3a^5 r^2-10a^4 r+10a^3}{120}y
+ \frac{a^5}{120}z,\\
 f_2&=
 \frac{a^4 r^4-4a^3r^3+6a^2r^2-4a r+1}{24}x
-\frac{3a^4r^2-8a^3r+6a^2}{24}y
 +\frac{a^4}{24}z,\\
 f_3&=
 \frac{5 a^3 r^4-15 a^2 r^3+15 a r^2-6r}{72}x
 -\frac{5 a^3 r^2-10 a^2 r+4a}{24}y
 +\frac{a^3}{18}z\\
 &\qquad
 +\frac{a^3 r^2-3a^2 r+3a}{72}u
 -\frac{a^3}{72}v,&\\
 f_4&= 
 \frac{a^2 r^4-2a r^3+r^2}{24}x
 -\frac{3a^2 r^2-4a r}{24}y
 +\frac{a^2 r^2-2a r+1}{24}u
 -\frac{a^2}{24}v,\\
 f_5&=
 \frac{a r^4}{180}x
 -\frac{a r^2}{60}y
 -\frac{a}{45}z
 +\frac{2a r^2-3r}{72}u
 -\frac{a}{36}v+a.
\end{align*}

On the other hand, since $Y \simeq \bP^4$, we have
\[
\chi(m(a\xi_\sE-\pi^*H_X))=\chi(mH_Y)=\binom{m+4}{m}.
\]
Thus
\begin{align*}
 &f_1= 0,\\
 &f_2= \frac{1}{24},\\
 &f_3= \frac{5}{12},\\
 &f_4= \frac{35}{24},\\
 &f_5= \frac{25}{12}.
\end{align*}
Note that $(a,r_X) = (1,3)$ or $(3,1)$.
Thus, for each pair $(a,r_X)$, we can solve this system of linear equations for $(x,y,z,u,v)$.

If $a=3$ and $r_X=1$, then the above system of linear equations has no solution.
Thus we have $a=1$ and $r_X=3$.
Now the assertion follows from a straightforward calculation.
\end{proof}
\begin{proof}[Proof of Thoerem~\ref{thm:quarticDP}]
As in the proof of Theorem~\ref{thm:quarticDP}, we have the following commutative diagram:
\[
\xymatrix{
  \bP(\sE ) \ar[d]_-{\pi} \ar[r] \ar@(ur,ul)[rr]^-{\varphi}  & \bP(\sS ^* _{\Gr}) \ar[d]_-{\pi_{\Gr}} \ar[r] & \bP^4         \\
X \ar[r]^-{j} & \Gr(2,5),& \\
}
\]
with the condition $j^*\sS_{\Gr}^* \simeq \sE(-1)$.
In particular $j^*\cO_{\Gr}(1) \simeq \cO_X(1)$.
Thus the map $j$ is finite, and the image $V \coloneqq j(X)$ is a closed subvariety of dimension $4$.

Recall that the Chow group of $\Gr(2,5)$ is generated by the Schubert cycles: fix a complete flag $(0\subset V_1 \subset V_2\subset V_3\subset V_4\subset V_5 =\bC^5 )$ in the vector space $\bC^5$.
Then, for a pair $(a_1, a_2)\in \bZ_{\geq0}$ with $2 \geq a_1 \geq a_2$, the Schubert cycle $\sigma_{(a_1,a_2)}$ is defined as the cycle associated to the closed subset
\[
\Sigma_{(a_1,a_2)} \coloneqq \{\, W \in \Gr(2,5) \mid \dim (V_{3+i-a_i} \cap W)\geq i \,\}.
\]

With this notation, we have
\[
c(\sS^*_{\Gr}) =1+ \sigma _{1,0} +\sigma _{1,1}.
\]
Thus
\[
c(\sE) =1+ 3H_X +j^*(\sigma _{1,1})+2H_X^2.
\]

Since $V = j(X)$ is a closed subvariety of dimension $4$, the class $[V]$ is written as $c\sigma_{1,1} + d\sigma _{2,0}$.
Then $j_*(X)=m[V]= cm\sigma_{1,1} + dm\sigma _{2,0}$, where $m$ is the degree of the map $j$.
Then, we have
\begin{align*}
 x&= H_X^4 = m(3c+2d),\\
 y&= c_2(\sE)\cdot H_X^2 = m(7c+5d),\\
 z&= c_2(\sE)^2= m(17c+12d).
\end{align*}

On the other hand, by Proposition~\ref{prop:RR}, we have
$ y= \frac{5x-1}{2}$ and
$ z= \frac{13x-7}{2}$,
and hence we have $m=1$, $c=1$ and $d=1$.

Since $m=1$, the map $j$ is birational, and hence it is the normalization map of $V$.
Also, since $c=1$ and $d=1$, the subvariety $V$ is rationally equivalent to the codimension two linear section $V_5\subset \Gr(2,5)$ via the Pl\"ucker embedding $\Gr(2,5) \subset \bP^9$.
Moreover, since $H_X^4=5$, we already know that $X$ is a quintic del Pezzo $4$-fold.
Now it is easy to check that $V$ itself is a smooth quintic del Pezzo $4$-fold $V_5\subset \Gr(2,5)$ and the map $j$ is an isomorphism.
This completes the proof.
\end{proof}

\bibliographystyle{amsalpha}
\bibliography{}

\end{document}